\newcommand{\beq}{\begin{equation}}
\newcommand{\eeq}{\end{equation}}
\newcommand{\beqa}{\begin{eqnarray}}
\newcommand{\eeqa}{\end{eqnarray}}
\newcommand{\beqas}{\begin{eqnarray*}}
\newcommand{\eeqas}{\end{eqnarray*}}
\newcommand{\ba}{\begin{array}}
\newcommand{\ea}{\end{array}}
\newcommand{\bi}{\begin{itemize}}
\newcommand{\ei}{\end{itemize}}
\newcommand{\nn}{\nonumber}
\newcommand{\mcX}{{X}}
\newcommand{\mcE}{{\mathcal E}}
\newcommand{\mcN}{{\mathcal N}}
\newcommand{\mcI}{{\mathcal I}}
\newcommand{\dist}{\mathrm{dist}}
\newtheorem{lemma}{Lemma}
\newtheorem{thm}{Theorem}
\newtheorem{coro}{Corollary}
\newtheorem{assumption}{Assumption}
\newtheorem{rem}{Remark}
\newcounter{spb}
\def\bL{{\bar L}}
\def\cC{{\Omega}}
\def\tx{{\tilde x}}
\def\tr0{{\tilde r_0}}
\def\rr{{\mathbb{R}}}
\def\bbE{{\mathbb{E}}}
\def\cB{{\mathscr{B}}}
\def\tK{{\widetilde K}}
\def\mcI{{\mathcal I}}
\def\mcO{{\mathcal O}}
\def\tf{{\tilde f}}
\def\tmcO{{\widetilde{\mathcal{O}}}}
\def\tx{{\tilde x}}
\title{Variance-reduced first-order methods for deterministically constrained stochastic nonconvex optimization with strong convergence guarantees\thanks{This work was partially supported by the Office of Naval Research under Award N00014-24-1-2702,  the Air Force Office of Scientific Research under Award FA9550-24-1-0343, and the National Science Foundation under Awards IIS-2211491 and IIS-2435911. It was primarily conducted during Sanyou Mei’s Ph.D. studies at the University of Minnesota.}}
\author{
Zhaosong Lu
\thanks{
Department of Industrial and Systems Engineering, University of Minnesota, USA (email: {\tt zhaosong@umn.edu}, {\tt xiao0414@umn.edu}). 
}
\and
Sanyou Mei
\thanks{Department of Industrial Engineering and Decision Analytics, the Hong Kong University of Science and Technology, Hong Kong, China (email: {\tt symei@ust.hk}).}
\and
Yifeng Xiao
\footnotemark[2]
}
\date{September 15, 2024 (Revised: October 9, 2024; August 26, 2025)}
\begin{document}
\maketitle

\begin{abstract}
In this paper, we study a class of deterministically constrained stochastic nonconvex optimization problems. Existing methods typically aim to find an $\epsilon$-\emph{expectedly feasible} stochastic stationary point, where the expected violations of both constraints and first-order stationarity are within a prescribed tolerance $\epsilon$. However, in many practical applications, it is crucial that the constraints be nearly satisfied with certainty, making such an $\epsilon$-stochastic stationary point potentially undesirable due to the risk of substantial constraint violations. To address this issue, we propose single-loop variance-reduced stochastic first-order methods, where the stochastic gradient of the stochastic component is computed using either a truncated recursive momentum scheme or a truncated Polyak momentum scheme for variance reduction, while the gradient of the deterministic component is computed exactly. Under the error bound condition with a parameter $\theta \geq 1$ and other suitable assumptions, we establish that these methods respectively achieve sample complexity and first-order operation complexity of $\tilde{\mcO}(\epsilon^{-\max\{\theta+2, 2\theta\}})$ and $\tilde{\mcO}(\epsilon^{-\max\{4, 2\theta\}})$ for finding an $\epsilon$-\emph{surely feasible} stochastic stationary point,\footnote{$\tmcO(\cdot)$  represents $\mathcal{O}(\cdot)$ with logarithmic factors hidden.} where the constraint violation is within $\epsilon$ with \emph{certainty}, and the expected violation of first-order stationarity is within $\epsilon$.  For $\theta=1$, these complexities reduce to $\tmcO(\epsilon^{-3})$ and $\tmcO(\epsilon^{-4})$ respectively, which match, up to a logarithmic factor, the best-known complexities achieved by existing methods for finding an $\epsilon$-stochastic stationary point of unconstrained smooth stochastic nonconvex optimization problems.
\end{abstract}

\noindent{\bf Keywords:} stochastic nonconvex optimization,  Polyak momentum, recursive momentum, variance reduction, stochastic first-order methods, sample complexity

\medskip

\noindent{\bf Mathematics Subject Classification:}  90C15, 90C26, 90C30, 65K05

\section{Introduction} \label{intro}
In this paper, we consider constrained stochastic nonconvex optimization problems of the form 
\beq\label{prob}
\begin{aligned}
\min_{x\in\mcX}&\quad f(x) :=\bbE[\tilde f(x,\xi)]\\
\mbox{s.t.}&\quad c(x)=0,
\end{aligned}
\eeq
where $\xi$ is a random variable with sample space $\Xi$, $\tf(\cdot,\xi)$ is continuously differentiable for each $\xi\in\Xi$,  $c:\rr^n\to\rr^m$ is a deterministic smooth mapping, and $\mcX\subseteq \rr^n$ is a simple closed convex set whose projection operator can be evaluated exactly.\footnote{For simplicity, we focus on problem \eqref{prob} with equality constraints only. However, our results can be directly extended to problems involving both equality and inequality constraints; see the concluding remarks for further details.} Problem \eqref{prob} arises in a variety of important areas, including energy systems \cite{shabazbegian2020stochastic}, healthcare \cite{shehadeh2022stochastic}, image processing \cite{luke2020proximal}, machine learning \cite{cuomo2022scientific,karniadakis2021physics},  network optimization \cite{bertsekasnetwork}, optimal control \cite{betts2010practical}, PDE-constrained optimization \cite{rees2010optimal}, resource allocation \cite{fontaine2020adaptive}, and transportation \cite{maggioni2009stochastic}. More applications can be found, for example, in \cite{bottou2018optimization,combettes2011proximal,jiang2018consensus,kushner2012stochastic,liu2015sparse}, and references therein.

Numerous stochastic gradient methods have been developed for solving specific instances of problem \eqref{prob} with $c = 0$ (e.g., see \cite{cutkosky2019momentum,fang2018spider,gao2024non,ghadimi2013stochastic,tran2022hybrid,
wang2019spiderboost,xu2023momentum}). Notably, when $f$ is Lipschitz smooth (see Assumption \ref{a-mm}), the methods in \cite{gao2024non,ghadimi2013stochastic} achieve a sample complexity of $\mcO(\epsilon^{-4})$ for finding an $\epsilon$-stochastic stationary point $x$ that satisfies
\[
\bbE[\dist\left(0,\nabla f(x)+\mcN_\mcX(x)\right)]\leq\epsilon.
\]
Furthermore, when $\tf(\cdot, \xi)$ is Lipschitz smooth on average (see Assumption \ref{a-storm}), the methods in \cite{cutkosky2019momentum,fang2018spider,tran2022hybrid,wang2019spiderboost,xu2023momentum} improve this sample complexity to $\mcO(\epsilon^{-3})$ for finding an $\epsilon$-stochastic stationary point.

Additionally,  various methods have been proposed for problem \eqref{prob} with $X = \rr^n$ and $c \neq 0$. For instance, \cite{kushner1974penalty,wang2017penalty} developed stochastic penalty methods that apply a stochastic approximation or gradient method to solve a sequence of quadratic penalty subproblems.  Stochastic sequential quadratic programming (SQP) methods have also been proposed in \cite{berahas2024stochastic,berahas2021sequential,berahas2023accelerating,curtis2023almost,curtis2024worst,curtis2021inexact,fang2024fully,na2023adaptive,na2022asymptotic,qiu2023sequential}, which modify the classical SQP framework by using stochastic approximations of $f$ and by appropriately selecting step sizes. Under suitable assumptions, these methods ensure the asymptotic convergence of the expected violations of feasibility and first-order stationarity to zero. Moreover, the methods in \cite{curtis2023almost,kushner1974penalty,na2022asymptotic,qiu2023sequential} guarantee almost-sure convergence of these quantities. Besides, the sample complexity of  $\tmcO(\epsilon^{-4})$ for finding an $\epsilon$-stochastic stationary point is achieved by methods in \cite{curtis2024worst,na2022asymptotic}. It is worth mentioning that their operation complexity is often higher than the sample complexity by a constant factor, due to the need to solve linear systems. Additionally, these methods may not be applicable to problem \eqref{prob} when $X \neq \rr^n$.

Recently, several methods have been proposed for solving problem \eqref{prob} with $X \neq \rr^n$ and $c \neq 0$. For instance, \cite{shi2022momentum} proposed a momentum-based linearized augmented Lagrangian method for this problem, achieving a sample and first-order operation complexity\footnote{Sample complexity and first-order operation complexity refer to the total number of samples and gradient evaluations of $\tf$ used throughout the algorithm, respectively.} of  $\tmcO(\epsilon^{-4})$ for finding an $\epsilon$-\emph{expectedly feasible}-stochastic stationary point ($\epsilon$-EFSSP), that is, a point $x$ satisfying 
\beq \label{eps-stationarity1} 
\bbE[\|c(x)\|] \leq \epsilon, \qquad \bbE[\dist\left(0, \nabla f(x) + \nabla c(x)\lambda + \mcN_\mcX(x)\right)] \leq \epsilon 
\eeq 
for some Lagrangian multiplier $\lambda$. This complexity improves to  $\tmcO(\epsilon^{-3})$ when a nearly feasible point of \eqref{prob} is available.  More recently, \cite{alacaoglu2024complexity} proposed a stochastic quadratic penalty method that iteratively applies a single stochastic gradient descent step to a sequence of quadratic penalty functions $Q_{\rho_k}(x)$, where $\rho_k$ is a penalty parameter, and $Q_{\rho}$ is defined as 
\beq\label{def-Q} 
Q_\rho(x) := f(x) + \frac{\rho}{2} \|c(x)\|^2. 
\eeq 
In this method, the stochastic gradient is computed using the recursive momentum scheme introduced in  \cite{cutkosky2019momentum}, treating $\rho$ as part of the variables (see Section \ref{sec:storm} for more detailed discussions). Under the error bound condition \eqref{err-cond} with $\theta=1$ and other suitable assumptions, this method achieves a sample and first-order operation complexity of $\tmcO(\epsilon^{-4})$ for finding an $\epsilon$-EFSSP.

In many applications such as energy systems \cite{shabazbegian2020stochastic}, machine learning \cite{cuomo2022scientific,karniadakis2021physics}, resource allocation \cite{fontaine2020adaptive}, and transportation \cite{maggioni2009stochastic}, all or some of the constraints in problem \eqref{prob} are hard constraints representing imperative requirements. Consequently, any desirable approximate solution must (nearly) satisfy these constraints. As mentioned above, the $\epsilon$-stochastic stationary point $x$ found by existing methods \cite{alacaoglu2024complexity,berahas2021sequential,curtis2024worst,curtis2021inexact,shi2022momentum,wang2017penalty}   
 satisfies $\bbE[\|c(x)\|]\leq\epsilon$. However, it is possible that $\|c(x)\|$ may still be excessively large, leading to significant constraint violations, which is undesirable in applications where practitioners require nearly exact constraint satisfaction.

To address the aforementioned issue, we propose single-loop variance-reduced stochastic first-order methods for solving problem \eqref{prob}, inspired by the framework of \cite[Algorithm 2]{alacaoglu2024complexity}, but with a significantly different approach to constructing the stochastic gradient. Specifically, starting from any initial point $x_0\in X$, we iteratively solve a sequence of quadratic penalty problems $\min_{x\in X} Q_{\rho_k} (x)$ by performing only a \emph{single} stochastic gradient descent step
\[
x_{k+1}=\Pi_\mcX(x_k-\eta_kG_k),
\]
 where  $\eta_k>0$ is a step size,  $G_k$ is a variance-reduced estimator of $\nabla Q_{\rho_k} (x_k)$, and $\Pi_\mcX$ denotes the projection operator onto the set $X$. In our approach,  $G_k$ is constructed by  separately handling the stochastic component $f(x)$ and the deterministic penalty term $\rho_k\|c(x)\|^2/2$ of $Q_{\rho_k}(x)$.  The gradient $\nabla f(x_k)$ is approximated by a stochastic estimator $g_k$, computed via a novel truncated recursive or Polyak momentum scheme, which ensures both variance reduction and boundedness of $g_k$---a property not guaranteed by the standard recursive momentum scheme  \cite{cutkosky2019momentum}. Meanwhile, the gradient of the penalty term is computed exactly as $\rho_k \nabla c(x_k) c(x_k)$. Combining these two components yields $G_k=g_k+\rho_k\nabla c(x_k)c(x_k)$, which serves as a variance-reduced stochastic estimator of $\nabla Q_{\rho_k}(x_k)$ (see Algorithms \ref{thm:storm} and \ref{thm:mm} for futher details). 

Moreover,  we develop a novel convergence analysis for our proposed methods, which differs significantly from existing analyses of stochastic algorithms for solving problem \eqref{prob}. Specifically, by leveraging the variance-reduced structure of $G_k$, we first interpret our algorithms as inexact projected gradient methods applied to the associated feasibility problem and establish convergence rates for the \emph{deterministic} feasibility violation.  We then use this result, along with a carefully constructed potential function, to derive convergence rates for the expected first-order stationarity violation.

Under  the error bound condition \eqref{err-cond} with $\theta \geq 1$ and other suitable assumptions, we establish that our methods respectively achieve a sample and first-order operation complexity of $\tmcO\big(\epsilon^{-\max\{\theta+2,2\theta\}}\big)$ and $\tmcO\big(\epsilon^{-\max\{4,2\theta\}}\big)$ for finding an $\epsilon$-\emph{surely feasible}-stochastic stationary point ($\epsilon$-SFSSP), that is, a random vector $x$ satisfying 
\beq \label{eps-stationarity2}
\|c(x)\| \leq\epsilon, \qquad  \bbE[\dist\left(0,\nabla f(x)+\nabla c(x)\lambda+\mcN_\mcX(x)\right)]\leq\epsilon
\eeq
for some random vector $\lambda$, where the randomness of $(x,\lambda)$ arises from the stochastic samples used in the methods. Since an $\epsilon$-SFSSP nearly satisfies all the constraints with certainty, it is a stronger notion than the $\epsilon$-EFSSP commonly considered in the literature.  Moreover, our complexity results are novel, as no previous work has established complexity bounds for general values of $\theta$, even for finding the weaker $\epsilon$-EFSSP. Additionally, in the special case where $\theta = 1$, our method based on a truncated recursive momentum scheme achieves a sample and first-order operation complexity of $\tmcO(\epsilon^{-3})$, which matches the complexity achieved by the method in \cite{shi2022momentum}, but is stronger than the complexity of $\tmcO(\epsilon^{-4})$ achieved by \cite[Algorithm 2]{alacaoglu2024complexity}. However, these methods achieve such complexities only for finding the weaker $\epsilon$-EFSSP, and the method in \cite{shi2022momentum} additionally requires the availability of a nearly feasible point. Moreover, the complexity of $\tmcO(\epsilon^{-3})$ matches, up to a logarithmic factor, the complexity achieved by the method in \cite{cutkosky2019momentum} for problem \eqref{prob} with $X=\rr^n$ and $c=0$, as well as the methods in \cite{fang2018spider, gao2024non, ghadimi2013stochastic, tran2022hybrid, wang2019spiderboost, xu2023momentum} for problem \eqref{prob} with $c=0$.

The main contributions of our paper are summarized as follows.
\begin{itemize}
\item We propose novel single-loop variance-reduced stochastic first-order methods with a truncated recursive or Polyak momentum for solving problem \eqref{prob}.
\item   We show that under the error bound condition \eqref{err-cond} with $\theta \geq 1$ and other suitable assumptions, our proposed methods respectively achieve a sample and first-order operation complexity of $\tmcO\big(\epsilon^{-\max\{\theta+2,2\theta\}}\big)$ and $\tmcO\big(\epsilon^{-\max\{4,2\theta\}}\big)$ for finding an $\epsilon$-SFSSP, which is a stronger notion than the $\epsilon$-EFSSP commonly sought by existing methods.
\end{itemize}
To the best of our knowledge, this is the first work to develop methods with provable complexity guarantees for finding an approximate stochastic stationary point of problem \eqref{prob} that nearly satisfies all constraints with \emph{certainty}. 

The rest of this paper is organized as follows. In Subsection \ref{sec:notation}, we introduce some notation,  terminology, and assumption. In Sections \ref{sec:storm} and  \ref{sec:mm}, we propose stochastic first-order methods with a truncated recursive momentum or a truncated Polyak momentum for problem \eqref{prob} and analyze their convergence. We provide the proof of the main results in Section \ref{sec:proof}. Finally, concluding remarks are given in Section \ref{sec:conclude}.

\subsection{Notation, terminology, and assumption}\label{sec:notation}
The following notation will be used throughout this paper.  Let $\rr_{>0}$ denote the set of positive real numbers, and  $\rr^n$ denote the Euclidean space of dimension $n$. The standard inner product and Euclidean norm are denoted by $\langle\cdot,\cdot\rangle$ and $\|\cdot\|$, respectively.  For any $r>0$, let $\cB(r)$ represent the Euclidean ball centered at the origin with radius $r$, that is, $\cB(r)=\{x:\|x\|\leq r\}$.  For any $t\in\rr$, let $t_+=\max\{t,0\}$ and $\lceil t \rceil$ denote the least integer greater than or equal to $t$.

A mapping $\phi$ is said to be \emph{$L_{\phi}$-Lipschitz continuous} on a set $\Omega$ if $\|\phi(x)-\phi(x')\| \leq L_{\phi} \|x-x'\|$ for all $x,x'\in \Omega$. Also, it is said to be \emph{$L_{\nabla\phi}$-smooth} on $\Omega$ if $\|\nabla\phi(x)-\nabla\phi(x')\| \leq L_{\nabla\phi} \|x-x'\|$ for all $x,x'\in \Omega$, where $\nabla \phi$ denotes the transpose of the Jacobian of $\phi$.  Given a nonempty closed convex set $\cC$, $\dist(x,\cC)$ denotes the Euclidean distance from $x$ to $\cC$, and $\Pi_\cC(x)$ denotes the Euclidean projection of $x$ onto $\Omega$.  In addition,  the normal cone of $\cC$ at any $x\in \cC$ is denoted by $\mcN_\cC(x)$.
Finally, we use $\tmcO(\cdot)$ to denote the asymptotic upper bound that ignores logarithmic factors.

Throughout this paper, we make the following assumptions for problem \eqref{prob}.

\begin{assumption}\label{a1}
\begin{enumerate}[label=(\roman*)]
\item The optimal value $f^*$ of problem \eqref{prob}  and $Q_1^*:= \min_{x\in X} \{f(x)+\|c(x)\|^2/2\}$ are finite.
\item $f$ is differentiable and $L_f$-Lipschitz continuous in an open neighborhood of $\mcX$.
\item For each $\xi\in\Xi$, $\tf(\cdot,\xi)$ is differentiable on $\mcX$ and satisfies the following conditions:
\[
\bbE[\nabla\tf(x,\xi)]=\nabla f(x),\quad\bbE[\|\nabla\tf(x,\xi)-\nabla f(x)\|^2]\leq\sigma^2 \qquad \forall x\in\mcX
\]
for some constant $\sigma \geq 0$. 
\item The mapping $c$ is $L_c$-Lipschitz continuous and $L_{\nabla c}$-smooth in an open neighborhood of $\mcX$. Additionally, $\|c(x)\|\leq C_c$ for all $x\in\mcX$, and there exist constants $\gamma>0$ and $\theta\geq1$ such that
\beq \label{err-cond}
\dist\left(0,\nabla c(x)c(x)+\mcN_\mcX(x)\right)\geq\gamma\|c(x)\|^\theta\qquad \forall x\in\mcX.
\eeq
\end{enumerate}
\end{assumption}

In addition, for notational convenience, we define
\begin{align}
L:= L_c^2+C_cL_{\nabla c}.\label{def-L}
\end{align}
It follows from this and Assumption \ref{a1} that $\|c(x)\|^2/2$ is $L$-smooth on $\mcX$, and 
\beq\label{cf-bnd}
\|\nabla f(x)\|\leq L_f,\quad\|\nabla c(x)\|\leq L_c\qquad\forall x\in\mcX.
\eeq

Before ending this subsection, we make some remarks on Assumption \ref{a1}.

\begin{rem} 
\begin{enumerate}[label=(\roman*)]
\item The assumption on the finiteness of $Q_1^*$ is generally weaker than the condition $\min_{x \in X} f(x) \geq 0$, which is imposed in related work such as \cite{alacaoglu2024complexity}. Moreover, this assumption is quite mild. Specifically, since the optimal value $f^*$ of \eqref{prob} is finite and 
\[
\lim_{\rho \to \infty} \min_{x \in X} \{f(x) + \rho \|c(x)\|^2/2\} = f^*,
\]
 there exists some $\underline \rho>0$ such that $\min_{x\in X}\{f(x)+\rho\|c(x)\|^2/2\}$ and consequently $\min_{x\in X}\{\rho^{-1}f(x)+\|c(x)\|^2/2\}$ are finite for all $\rho \geq \underline \rho$. Therefore, if Assumption \ref{a1}(i) does not hold, one can replace $f$ with $\rho^{-1}f$ for some $\rho \geq \underline \rho$, ensuring the resulting problem \eqref{prob} satisfies Assumption \ref{a1}(i). 

\item Assumption \ref{a1}(iii) is standard and implies that $\nabla\tf(x,\xi)$ is an unbiased estimator of $\nabla f(x)$ with a bounded variance for all $x\in X$. 

\item Condition \eqref{err-cond} is an error bound condition that plays a crucial role in designing algorithms capable of producing nearly feasible solutions to problem \eqref{prob}. The special case with $\theta = 1$ is commonly assumed in the literature (e.g., see \cite{alacaoglu2024complexity,li2024stochastic,lin2022complexity,sahin2019inexact}). When $\theta \in [1, 2)$, one can verify that condition \eqref{err-cond} is equivalent to a global Kurdyka–\L ojasiewicz (KL) condition 
\cite{kurdyka1998gradients,lojasiewicz1963propriete} with exponent $1 - \theta/2$ for the feasibility problem $\min_x \{\|c(x)\|^2/2 + \delta_X(x)\}$,  where $\delta_X(\cdot)$ denotes the indicator function of the set $X$. Moreover, when $\theta \geq 2$, condition \eqref{err-cond} goes beyond the KL condition and is thus more general.
\end{enumerate}
\end{rem}

\section{A stochastic first-order method with a truncated  recursive momentum for problem \eqref{prob}}\label{sec:storm}

In this section, we propose a stochastic first-order method with a truncated recursive momentum for solving problem \eqref{prob}, inspired by the framework of \cite[Algorithm 2]{alacaoglu2024complexity}, but employing a significantly different approach to constructing the stochastic gradient.  Moreover, the proposed method exhibits stronger convergence properties compared to existing methods (see Remark \ref{remark2}).

Specifically,  starting from any initial point $x_0\in X$, we approximately solve a sequence of quadratic penalty problems $\min_{x\in X} Q_{\rho_k} (x)$ by performing only a \emph{single} stochastic gradient descent step $x_{k+1}=\Pi_\mcX(x_k-\eta_kG_k)$, where $\rho_k$ is a penalty parameter, $\eta_k>0$ is a step size,  $G_k$ is a variance-reduced estimator of $\nabla Q_{\rho_k} (x_k)$, and $Q_{\rho_k}$ is given in \eqref{def-Q}. 
Notice from \eqref{def-Q} that $\nabla Q_{\rho_k}(x_k)=\nabla f(x_k)+\rho_k\nabla c(x_k) c(x_k)$. Based on this, we particularly choose $G_k=g_k+\rho_k\nabla c(x_k)c(x_k)$, where $g_k$ is a variance-reduced estimator of $\nabla f(x_k)$, computed recursively as follows:
\beq \label{gk}
g_k=\Pi_{\cB(L_f)}\big(\nabla \tf(x_k,\xi_k)+(1-\alpha_{k-1})(g_{k-1}-\nabla\tf(x_{k-1},\xi_k))\big)
\eeq
for some $\alpha_{k-1}\in(0,1]$ and a randomly drawn sample $\xi_k$. 
This scheme is a slight modification of the recursive momentum scheme introduced in \cite{cutkosky2019momentum}, incorporating a truncation operation via the projection operator $\Pi_{\cB(L_f)}$ to ensure the boundedness of $\{g_k\}$, which is crucial for the subsequent analysis. Interestingly, despite this truncation, the modified scheme preserves a variance-reduction property similar to the original scheme in \cite{cutkosky2019momentum} (see Lemma \ref{l-exp}).

The proposed stochastic first-order method with a truncated recursive momentum for solving problem \eqref{prob} is presented in Algorithm \ref{alg1} below.

\begin{algorithm}[H]
\caption{A stochastic first-order method with a truncated  recursive momentum for problem \eqref{prob}}\label{alg1}
\begin{algorithmic}[1]
\REQUIRE $x_1\in\mcX$, $\{\alpha_k\} \subset (0,1]$,  $\{\rho_k\}, \{\eta_k\} \subset \rr_{>0}$, and $L_f$ given in Assumption \ref{a1}.
\STATE Sample $\xi_1$ and set $g_1=\Pi_{\cB(L_f)}(\nabla\tf(x_1,\xi_1))$.
\FOR{$k=1,2,\dots$}
\STATE $G_k=g_k+\rho_k\nabla c(x_k)c(x_k)$.
\STATE $x_{k+1}=\Pi_\mcX(x_k-\eta_kG_k)$.
\STATE Sample $\xi_{k+1}$ and set $g_{k+1}=\Pi_{\cB(L_f)}\big(\nabla\tf(x_{k+1},\xi_{k+1})+(1-\alpha_k)(g_k-\nabla\tf(x_k,\xi_{k+1}))\big)$.
\ENDFOR
\end{algorithmic}
\end{algorithm}

The parameters $\{\alpha_k\}$, $\{\rho_k\}$ and $\{\eta_k\}$ will be specified in Theorem \ref{thm:storm} for Algorithm \ref{alg1} to achieve a desirable convergence rate. While Algorithm \ref{alg1} shares a similar framework with \cite[Algorithm 2]{alacaoglu2024complexity}, the construction of the variance-reduced estimator $G_k$ of $\nabla Q_{\rho_k} (x_k)$ differs substantially between the two algorithms. In particular, $G_k$ in \cite[Algorithm 2]{alacaoglu2024complexity} is derived by applying the recursive momentum scheme introduced in \cite{cutkosky2019momentum} to the entire function $Q_{\rho}(x)$, treating $\rho$ as part of the variables, and it is given by 
\[
G_k=\widetilde\nabla Q_{\rho_k}(x_k,\xi_k)+(1-\alpha_{k-1})(G_{k-1}-\widetilde\nabla Q_{\rho_{k-1}}(x_{k-1},\xi_k)),
\] 
where $\widetilde\nabla Q_{\rho}(x,\xi)=\nabla \tf(x,\xi)+\rho\nabla c(x)c(x)$. In contrast, $G_k$  in Algorithm \ref{alg1} is constructed by handling the stochastic part 
$f(x)$ and the deterministic part $\rho_k\|c(x)\|^2/2$ of $Q_{\rho_k}(x)$  separately. Specifically, $\nabla f(x_k)$ is approximated by a stochastic estimator $g_k$ computed via  truncated  recursive momentum scheme as given in \eqref{gk}, while the gradient of the deterministic term, $\nabla (\rho_k\|c(x)\|^2/2)|_{x=x_k}$, is evaluated exactly as $\rho_k\nabla c(x_k)c(x_k)$. Combining these two components gives $G_k=g_k+\rho_k\nabla c(x_k)c(x_k)$ for Algorithm \ref{alg1}. 

Moreover, our convergence analysis for Algorithm \ref{alg1} substantially differs from that of \cite[Algorithm 2]{alacaoglu2024complexity}. Specifically, by leveraging the structure of $G_k$, we first interpret our algorithm as an inexact projected gradient method applied to the associated feasibility problem, and establish a convergence rate for the \emph{deterministic} feasibility violation. We then use this result, together with a carefully constructed potential function, to derive a convergence rate for the expected first-order stationarity violation. In contrast, \cite{alacaoglu2024complexity} first bounds the expected feasibility violation using the expected consecutive change of a potential function and uses this bound to derive a convergence rate for the expected first-order stationarity violation. This rate is then used to establish a convergence rate for the expected feasibility violation.

Our novel algorithmic design and analysis lead to significantly stronger convergence results for Algorithm \ref{alg1} than those established for \cite[Algorithm 2]{alacaoglu2024complexity}. Specifically, under Assumptions \ref{a1} and \ref{a-storm} with $\theta=1$,  Algorithm \ref{alg1} generates a  sequence $\{x_k\}$ satisfying 
\[
 \|c(x_{\iota_k})\|^2= \widetilde\mcO(k^{-2/3}), \quad \bbE\left[\dist^2\left(0,\nabla f(x_{\iota_k})+\nabla c(x_{\iota_k})\lambda_{\iota_k}+\mcN_\mcX(x_{\iota_k})\right)\right] = \widetilde\mcO(k^{-2/3})
\]
for some sequence $\{\lambda_k\}$, where $\iota_k$  is uniformly drawn from $\left\{\lceil k/2\rceil+1,\dots,k\right\}$ for $k\geq 2$ (see Theorem \ref{thm:storm}). In contrast, \cite[Algorithm 2]{alacaoglu2024complexity}  generates a sequence $\{\tx_k\}$ satisfying 
\[
\bbE[\|c(\tx_{\iota_k})\|^2]= \widetilde\mcO(k^{-1/2}), \quad \bbE\left[\dist^2\left(0,\nabla f(\tx_{\iota_k})+\nabla c(\tx_{\iota_k})\tilde\lambda_{\iota_k}+\mcN_\mcX(\tx_{\iota_k})\right)\right] = \widetilde\mcO(k^{-1/2})
\]
for some sequence $\{\tilde\lambda_k\}$ (see \cite[Theorem 4.2]{alacaoglu2024complexity}). Clearly, the sequence $\{x_k\}$ generated by Algorithm \ref{alg1} exhibits a stronger convergence property: it not only achieves a substantially faster convergence rate, but also ensures that $\|c(x_{\iota_k})\|^2$ converges \emph{deterministically}, rather than merely in expectation. Furthermore, under Assumptions \ref{a1} and \ref{a-storm} with $\theta > 1$,  Algorithm \ref{alg1} guarantees that
\[
 \|c(x_{\iota_k})\|^2= \widetilde\mcO(k^{-\tau}), \quad \bbE\left[\dist^2\left(0,\nabla f(x_{\iota_k})+\nabla c(x_{\iota_k})\lambda_{\iota_k}+\mcN_\mcX(x_{\iota_k})\right)\right] = \widetilde\mcO(k^{-\tau})
\]
with $\tau=\min\{2/(\theta+2),1/\theta\}$ for some sequence $\{\lambda_k\}$, while the convergence behavior of \cite[Algorithm 2]{alacaoglu2024complexity} under this setting remains unknown.

To formally present the convergence results for Algorithm \ref{alg1}, we next introduce an assumption that specifies an average smoothness condition for problem \eqref{prob}.

\begin{assumption}\label{a-storm}
The function $\tf(x,\xi)$ satisfies the average smoothness condition:
\[
\bbE[ \|\nabla\tf(u,\xi)-\nabla\tf(v,\xi)\|^2]\leq \bL_{\nabla f}^2\|u-v\|^2\qquad \forall u,v\in\mcX.
\]
\end{assumption}

Assumption \ref{a-storm} is commonly imposed in the literature to design algorithms for solving problems of the form $\min_x \bbE[\tf(x,\xi)]+P(x)$, where $P$ is either zero or a simple but possibly nonsmooth function (e.g., see \cite{cutkosky2019momentum,fang2018spider,tran2022hybrid,wang2019spiderboost,xu2023momentum}). 
It can be observed that Assumption \ref{a-storm} implies that $\nabla f$ is $\bL_{\nabla f}$-smooth on $X$, that is,
\beq \label{bL-smooth}
 \|\nabla f(u)-\nabla f(v)\| \leq \bL_{\nabla f} \|u-v\| \qquad \forall u,v\in\mcX.
\eeq
However, the reverse implication does not hold in general (e.g., see \cite{ghadimi2013stochastic}).

We are now ready to present the convergence results for Algorithm \ref{alg1}, with the proof deferred to Subsection \ref{sec:proof-storm}. Specifically, we will present convergence rates for the following two 
quantities:
\beq \label{stationarity-criteria}
 \|c(x_{\iota_k})\|^2 \ \   \mbox{and} \ \   \bbE\left[\dist^2\left(0,\nabla f(x_{\iota_k})+\rho_{\iota_k-1}\nabla c(x_{\iota_k})c(x_{\iota_k})+\mcN_\mcX(x_{\iota_k})\right)\right],
\eeq
 where $\iota_k$ is uniformly drawn from $\left\{\lceil k/2\rceil+1,\dots,k\right\}$.  These quantities measure the constraint violation and the expected stationarity violation at $x_{\iota_k}$.

\begin{thm}\label{thm:storm}
Suppose that Assumptions \ref{a1} and \ref{a-storm} hold, and $\{x_k\}$ is generated by Algorithm \ref{alg1}. Let $L$ be defined in \eqref{def-L}, $L_f$, $\bL_{\nabla f}$, $L_c$, $C_c$, $\sigma$,  $\gamma$, $\theta$ and $Q_1^*$ be given in Assumptions \ref{a1} and \ref{a-storm}, $g_1$ be given in Algorithm \ref{alg1}, and $\iota_k$  be the random variable uniformly generated from $\left\{\lceil k/2\rceil+1,\dots,k\right\}$ for $k\geq 2$, and let $\rho_k$, $\eta_k$ and $\alpha_k$ be chosen as
\beq\label{def-para2}
\rho_k=k^\nu,\quad\eta_k=k^{-\nu}/(4\log(k+2)), \quad \alpha_k=k^{-2\nu},\quad\mbox{where}\quad \nu=\min\{\hat\theta/(\hat\theta+2),1/2\}
\eeq
for some $\hat\theta\geq1$. Then for all $k\geq2\tK_1$, we have
\begin{align*}
&\bbE\left[\dist^2\left(0,\nabla f(x_{\iota_k})+\rho_{\iota_k-1}\nabla c(x_{\iota_k})c(x_{\iota_k})+\mcN_\mcX(x_{\iota_k})\right)\right]\\
&\leq\frac{102\log(k+1)}{(k-1)^{1-\nu}}\Bigg(Q_1(x_1)-Q_1^*+\frac{\|g_1-\nabla f(x_1)\|^2}{2\log 3}+\frac{1}{2}C_1\max\{1,k^{\nu-\frac{2\nu}{\theta}}\}(1+\log k)\nn \\ 
&\quad+\frac{1}{2}C_c^2\big(\tK_1^{\nu}-1\big)
+\frac{3\sqrt{2}\sigma^2(1+\log k)}{2\log3}+\frac{1}{16}(1+\log\tK_1)\big(\bL_{\nabla f}+\tK_1^{\frac{1}{2}}L+6\tK_1^{\frac{1}{2}}\bL_{\nabla f}^2\big)(L_f^2+C_c^2L_c^2\tK_1)\Bigg),\\
&\|c(x_{\iota_k})\|^2\leq2C_1 (k/2)^{-2\nu/\theta},
\end{align*}
where
\begin{align}
&\tK_1=\left\lceil\max\left\{(2\bL_{\nabla f})^{\frac{1}{\nu}},e^{6\times2^{\nu/2}\bL_{\nabla f}^2},e^{2L},e^{2\theta},\left(e^{-1}\gamma^{-2}2^{6-\theta}\log(e^{2\theta}+2)\right)^{2\theta}\right\}\right\rceil,\label{def-tK2}\\
&C_1=\max\left\{1,\tK_1^{2\nu/\theta}C_c^2/2,2^{3-\theta}L_f^2\gamma^{-2}\right\}.\label{def-nuC2}
\end{align}

\end{thm}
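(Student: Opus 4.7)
The plan is to decouple Algorithm \ref{alg1} into two analyses that share the iterates $\{x_k\}$ but use different perspectives: a \emph{deterministic} feasibility analysis that exploits the truncation $g_k\in\cB(L_f)$, and a \emph{stochastic} descent analysis on the penalty function $Q_{\rho_k}$ that is glued together via a potential function. The error-bound exponent $\theta$ enters only through the feasibility part, whereas the stationarity part absorbs the cost of the growing penalty $\rho_k$ and the variance of $g_k$.

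First I would rewrite the update as $x_{k+1}=\Pi_\mcX\bigl(x_k-\eta_k\rho_k(\nabla c(x_k)c(x_k)+\rho_k^{-1}g_k)\bigr)$, i.e., as an inexact projected gradient step for the feasibility problem $\min_{x\in\mcX}\|c(x)\|^2/2$ with effective stepsize $\eta_k\rho_k$ and perturbation bounded \emph{deterministically} by $\rho_k^{-1}L_f$ because of the $\Pi_{\cB(L_f)}$ truncation in \eqref{gk}. Combining the $L$-smoothness of $\|c\|^2/2$ (from \eqref{def-L}) with the error bound \eqref{err-cond} should yield a recursion of the form
\[
\|c(x_{k+1})\|^2 \le (1-c_1\eta_k\rho_k\gamma^2\|c(x_k)\|^{2(\theta-1)})\|c(x_k)\|^2 + c_2\eta_k\rho_k^{-1}L_f^2
\]
up to higher-order curvature terms. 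With $\eta_k\rho_k=\Theta(1/\log k)$, $\rho_k^{-1}=k^{-\nu}$, and $\nu=\min\{\hat\theta/(\hat\theta+2),1/2\}$, one argues inductively (starting from the threshold $\tK_1$ where all smoothness/step-size preconditions are met, which explains the choice of $\tK_1$ in \eqref{def-tK2}) that $\|c(x_k)\|^2\le C_1 k^{-2\nu/\theta}$ for the constant $C_1$ in \eqref{def-nuC2}. This is entirely sample-path-wise; no expectation is taken.

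Second, for the stationarity bound, I would build a Lyapunov/potential function of the shape
\[
\Phi_k := Q_{\rho_{k-1}}(x_k) - Q_1^* + \beta_k\|g_k-\nabla f(x_k)\|^2,
\]
where $\beta_k$ is a tunable weight. A descent inequality for $\Phi_k$ is obtained by combining: (i) the smoothness of $Q_{\rho_k}$ (which has smoothness constant $\bL_{\nabla f}+\rho_k L$, hence the $\rho_k$-dependence in the stepsize $\eta_k=k^{-\nu}/(4\log(k+2))$); (ii) the variance-recursion for the truncated recursive momentum estimator, for which the truncation-free version of \eqref{gk} is standard and Lemma \ref{l-exp} gives a bound of the form $\bbE\|g_{k+1}-\nabla f(x_{k+1})\|^2\le (1-\alpha_k)^2\bbE\|g_k-\nabla f(x_k)\|^2+2\alpha_k^2\sigma^2+2(1-\alpha_k)^2\bL_{\nabla f}^2\|x_{k+1}-x_k\|^2$; (iii) the extra drift $(\rho_k-\rho_{k-1})\|c(x_k)\|^2/2=O(k^{\nu-1}\cdot k^{-2\nu/\theta})$ coming from the penalty update, which is summable thanks to the feasibility bound from the first step. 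Telescoping $\Phi_k$ from $\lceil k/2\rceil$ to $k$, balancing the logarithmic factors, and using that $\iota_k$ is uniform on $\{\lceil k/2\rceil+1,\dots,k\}$ produces the average-stationarity bound stated in the theorem. The gradient mapping is then converted into the displayed stationarity residual $\dist(0,\nabla f(x_{\iota_k})+\rho_{\iota_k-1}\nabla c(x_{\iota_k})c(x_{\iota_k})+\mcN_\mcX(x_{\iota_k}))$ via the optimality condition of the projection step.

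The main obstacle I anticipate is the bookkeeping for the two sources of growth: the penalty $\rho_k=k^\nu$ makes the smoothness of $Q_{\rho_k}$ deteriorate while the variance accumulation in $\|g_k-\nabla f(x_k)\|^2$ has a drift term proportional to $\|x_{k+1}-x_k\|^2\le\eta_k^2\|G_k\|^2$, and $\|G_k\|$ itself scales with $\rho_k\|c(x_k)\|$. Closing the loop requires that the feasibility bound from Step 1 kicks back in to tame $\eta_k^2\rho_k^2\|c(x_k)\|^2$, which is precisely why $\nu$ is chosen as $\min\{\hat\theta/(\hat\theta+2),1/2\}$: the first branch makes $\eta_k\rho_k\cdot k^{-2\nu/\theta}$ summable when $\hat\theta\ge\theta$ is tight, and the $1/2$ cap prevents the variance-decay term $\alpha_k=k^{-2\nu}$ from becoming non-summable. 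The constants $\tK_1$ and $C_1$ are then dictated by requiring all inductive inequalities to hold from the start, which is why they take the max-of-several-conditions form in \eqref{def-tK2}–\eqref{def-nuC2}.
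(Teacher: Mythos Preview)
Your proposal is correct and follows essentially the same two-part structure as the paper: a deterministic feasibility bound obtained by viewing the update as an inexact projected-gradient step for $\|c(\cdot)\|^2/2$ (Lemmas \ref{l-rec}--\ref{l-cnst2}), followed by a stationarity bound via telescoping a potential of the form $Q_{\rho_k}(x_k)+\zeta_k\|g_k-\nabla f(x_k)\|^2$ (Lemmas \ref{l-exp}--\ref{l-dist}). The only minor deviations are that the paper applies the error bound \eqref{err-cond} at $x_{k+1}$ rather than at $x_k$ (giving a recursion $\phi(h(x_{k+1}))\le h(x_k)+\cdots$ with $\phi(t)=t+c\,t^\theta$ and an induction via monotonicity of $\phi$), and that the burn-in contribution $\sum_{i<\tK_1}(\cdots)\|x_{i+1}-x_i\|^2$ is controlled using the uniform bound $\|c\|\le C_c$ rather than the feasibility estimate, so no ``loop-closing'' through $\eta_k^2\rho_k^2\|c(x_k)\|^2$ is actually needed.
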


\begin{rem} \label{remark2} 
\begin{enumerate}[label=(\roman*)]
\item The parameter $\hat\theta$ in Theorem \ref{thm:storm} represents an estimate of the actual value of $\theta$. If the actual value of $\theta \geq 1$ is known, we set $\hat\theta = \theta$.
\item As shown in Theorem \ref{thm:storm}, the choice of $\rho_k$, $\eta_k$, and $\alpha_k$ in \eqref{def-para2} with any arbitrary $\hat\theta \geq 1$ ensures a convergence rate of $\widetilde \mcO(k^{-\min\{2\nu/\theta, 1-\nu, 1-2\nu+2\nu/\theta\}})$ for the quantities in \eqref{stationarity-criteria}. When the actual value of $\theta \geq 1$ is known, we set $\hat\theta = \theta$, and the convergence rate improves to $\widetilde \mcO(k^{-\min\{2/(\theta+2),\theta^{-1}\}})$.
\end{enumerate}
\end{rem}

The following result is an immediate consequence of Theorem \ref{thm:storm}. It provides iteration complexity results for Algorithm \ref{alg1} to find an $\epsilon$-SFSSP $x_{\iota_k}$ of problem \eqref{prob} satisfying \eqref{kkt} below. 

\begin{coro} \label{coro:storm}
Suppose that Assumptions \ref{a1} and \ref{a-storm} hold, and $\{x_k\}$ is generated by Algorithm \ref{alg1}. Let $\theta \geq 1$ be given in Assumption \ref{a1}, and $\iota_k$  be the random variable uniformly generated from $\left\{\lceil k/2\rceil+1,\dots,k\right\}$ for $k\geq 2$. Then the following statements hold.
\begin{enumerate}[label=(\roman*)]
\item Suppose that the actual value of $\theta$ is known. Let $\rho_k$, $\eta_k$ and $\alpha_k$ be chosen as in \eqref{def-para2} with $\hat\theta=\theta$. Then for any $\epsilon>0$, there exists some $T=\tmcO(\epsilon^{-\max\{\theta+2,2\theta\}})$ such that 
\beq\label{kkt}
\|c(x_{\iota_k})\|\leq\epsilon, \quad  \bbE\left[\dist\left(0,\nabla f(x_{\iota_k})+\rho_{\iota_k-1}\nabla c(x_{\iota_k})c(x_{\iota_k})+\mcN_\mcX(x_{\iota_k})\right)\right]\leq\epsilon 
\eeq
hold for all $k\geq T$.
\item Suppose that the actual value of $\theta$ is unknown. Let $\rho_k$, $\eta_k$ and $\alpha_k$ be chosen as in \eqref{def-para2} with $\hat\theta=1$. Then for any $\epsilon>0$,  there exists some $ T=\tmcO\left(\epsilon^{-3\theta}\right)$ such that \eqref{kkt} holds for all $k\geq T$.
\item Suppose that the actual value of $\theta$ is unknown. Let $\rho_k$, $\eta_k$ and $\alpha_k$ be chosen as in \eqref{def-para2} with $\hat\theta=2$. Then for any $\epsilon>0$,  there exists some $ T=\tmcO\left(\epsilon^{-\max\{4,2\theta\}}\right)$ such that \eqref{kkt} holds for all $k\geq T$.
\end{enumerate}
\end{coro}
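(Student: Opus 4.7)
The corollary follows directly from the quantitative bounds in Theorem \ref{thm:storm}, so my plan is: extract the asymptotic rates of the two quantities in \eqref{stationarity-criteria}; invert each rate to obtain a lower bound on $k$ that drives the quantity below its target; and then take the larger of the two bounds in each of the three parameter regimes.

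Absorbing constants and polylogarithmic factors (including those hidden in $\tK_1$ and $C_1$) into $\tmcO$, Theorem \ref{thm:storm} yields
\[
\|c(x_{\iota_k})\|^2 = \tmcO(k^{-2\nu/\theta}), \qquad \bbE\bigl[\dist^2(0,\nabla f(x_{\iota_k})+\rho_{\iota_k-1}\nabla c(x_{\iota_k})c(x_{\iota_k})+\mcN_\mcX(x_{\iota_k}))\bigr] = \tmcO(k^{-\delta}),
\]
where $\delta := \min\{1-\nu,\,1-2\nu+2\nu/\theta\}$ and $\nu = \min\{\hat\theta/(\hat\theta+2),\,1/2\}$. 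Taking square roots and invoking Jensen's inequality $\bbE[\dist(\cdot)] \leq \sqrt{\bbE[\dist^2(\cdot)]}$, the two conditions in \eqref{kkt} hold whenever $k \geq \tmcO(\epsilon^{-\theta/\nu})$ and $k \geq \tmcO(\epsilon^{-2/\delta})$, so it suffices to choose $T = \tmcO(\epsilon^{-\max\{\theta/\nu,\,2/\delta\}})$.

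What remains is a mechanical case analysis on the exponents. For case (i) with $\hat\theta = \theta$: when $\theta \leq 2$, $\nu = \theta/(\theta+2)$ gives $\theta/\nu = \theta+2$; since $1-2\nu+2\nu/\theta = (4-\theta)/(\theta+2) \geq 1-\nu = 2/(\theta+2)$, we get $2/\delta = \theta+2$. When $\theta > 2$, $\nu = 1/2$ gives $\theta/\nu = 2\theta$; since $1/\theta < 1/2$, we get $2/\delta = 2\theta$. Both subcases yield $T = \tmcO(\epsilon^{-\max\{\theta+2,\,2\theta\}})$. For case (ii) with $\hat\theta = 1$: $\nu = 1/3$ gives $\theta/\nu = 3\theta$, while $2/\delta = \max\{3,\,6\theta/(\theta+2)\} \leq 3\theta$ for every $\theta \geq 1$, so $T = \tmcO(\epsilon^{-3\theta})$. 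For case (iii) with $\hat\theta = 2$: $\nu = 1/2$ gives $\theta/\nu = 2\theta$ and $2/\delta = \max\{4,\,2\theta\}$, yielding $T = \tmcO(\epsilon^{-\max\{4,\,2\theta\}})$.

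The main (and essentially only) point requiring care is reducing the expectation bound of Theorem \ref{thm:storm} to the clean form $\tmcO(k^{-\delta})$ used above: one must verify that the $k$-dependent factor $C_1\max\{1,\,k^{\nu-2\nu/\theta}\}(1+\log k)$, multiplied by the leading $\log(k+1)/(k-1)^{1-\nu}$ prefactor, indeed decays at rate $k^{-\min\{1-\nu,\,1-2\nu+2\nu/\theta\}}$ up to logarithmic factors, and that all other constants (those involving $\tK_1$, $Q_1(x_1)$, $\sigma^2$, etc.) are genuinely $k$-independent. Once this observation is in place, the three cases reduce to the short algebraic computations above.
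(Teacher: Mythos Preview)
Your proposal is correct and is exactly the argument the paper has in mind: the paper states the corollary as ``an immediate consequence of Theorem~\ref{thm:storm}'' without giving any proof, and Remark~\ref{remark2}(ii) already records the combined rate $\tmcO(k^{-\min\{2\nu/\theta,\,1-\nu,\,1-2\nu+2\nu/\theta\}})$ that you derive and then invert. Your case-by-case computation of the exponents is accurate in all three regimes.
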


\begin{rem}
\begin{enumerate}[label=(\roman*)]
\item Since Algorithm \ref{alg1} requires one sample, one gradient evaluation of $c$, and two gradient evaluations of $\tf$ per iteration, its sample and first-order operation complexity are of the same order as its iteration complexity.  Our complexity results stated in Corollary \ref{coro:storm} are novel, as no prior work has established complexity bounds for general values of $\theta$, even for finding the weaker $\epsilon$-EFSSP satisfying \eqref{eps-stationarity1}. 
\item For $\theta = 1$, Algorithm \ref{alg1} with $\rho_k$, $\eta_k$ and $\alpha_k$ chosen as in \eqref{def-para2} with $\hat\theta=1$ achieves a sample and first-order operation complexity of $\tmcO(\epsilon^{-3})$, which matches the complexity achieved by the method in \cite{shi2022momentum}, but is stronger than the complexity of $\tmcO(\epsilon^{-4})$ achieved by \cite[Algorithm 2]{alacaoglu2024complexity}. However, these methods achieve such complexities only for finding the weaker $\epsilon$-EFSSP satisfying \eqref{eps-stationarity1}, and the method in \cite{shi2022momentum} additionally requires the availability of a nearly feasible point. Moreover, the complexity of $\tmcO(\epsilon^{-3})$ matches, up to a logarithmic factor, the complexity achieved by the method in \cite{cutkosky2019momentum} for problem \eqref{prob} with $X=\rr^n$ and $c=0$, as well as the methods in \cite{fang2018spider, gao2024non, ghadimi2013stochastic, tran2022hybrid, wang2019spiderboost, xu2023momentum} for problem \eqref{prob} with $c=0$.
\item When $\theta \in [1,4/3)$ and its actual value is unknown,  Algorithm \ref{alg1} achieves better complexity by setting $\rho_k$, $\eta_k$ and $\alpha_k$ as in \eqref{def-para2} with $\hat\theta=1$ rather than $\hat\theta=2$. However, when $\theta >4/3$ and its actual value is unknown, a better complexity is achieved with $\hat\theta=2$ instead of $\hat\theta=1$.
\end{enumerate}
\end{rem}

\section{A stochastic first-order method with a truncated Polyak momentum for problem \eqref{prob}}\label{sec:mm}

In this section, we propose a stochastic first-order method with a truncated Polyak momentum for solving problem \eqref{prob}. This method modifies Algorithm \ref{alg1}, with $g_k$ being recursively generated using the following truncated Polyak momentum scheme: 
\[
g_k=\Pi_{\cB(L_f)}\big(\alpha_{k-1}\nabla \tf(x_k,\xi_k)+(1-\alpha_{k-1})g_{k-1}\big)
\]
 for some $\alpha_{k-1}\in(0,1]$ and a randomly drawn sample $\xi_k$, where $L_f$ is given in Assumption \ref{a1}. This scheme is a slight modification of the well-known Polyak momentum scheme \cite{gao2024non,polyak1964some,yu2019linear}, incorporating a truncation operation via the projection operator $\Pi_{\cB(L_f)}$ to ensure the boundedness of the sequence $\{g_k\}$.  This boundedness is crucial for our subsequent analysis. Despite the truncation, the modified scheme preserves the variance-reduction property of the original Polyak momentum scheme (see Lemma \ref{l-exp2}).

 The proposed stochastic first-order method with a truncated Polyak momentum is presented in Algorithm \ref{alg2}.

\begin{algorithm}[H]
\caption{A stochastic first-order method with a truncated Polyak momentum for  \eqref{prob}}\label{alg2}
\begin{algorithmic}[1]
\REQUIRE $x_1\in\mcX$, $\{\alpha_k\} \subset (0,1]$,  and $\{\rho_k\}, \{\eta_k\} \subset \rr_{>0}$, and $L_f$ given in Assumption \ref{a1}.
\STATE Sample $\xi_1$ and set $g_1=\Pi_{\cB(L_f)}\big(\nabla \tf(x_1,\xi_1)\big)$.
\FOR{$k=1,2,\dots$}
\STATE $G_k=g_k+\rho_k\nabla c(x_k)c(x_k)$.
\STATE $x_{k+1}=\Pi_\mcX(x_k-\eta_kG_k)$.
\STATE Sample $\xi_{k+1}$ and set $g_{k+1}=\Pi_{\cB(L_f)}\big((1-\alpha_k)g_k+\alpha_k\nabla \tf(x_{k+1},\xi_{k+1})\big)$.
\ENDFOR
\end{algorithmic}
\end{algorithm}

It is worth noting that if the set $\{\nabla \tf(x,\xi): x\in X, \xi\in\Xi\}$ is bounded, the recursion for $g_{k+1}$ in step 5 of Algorithm \ref{alg2} can be modified to $g_{k+1}=(1-\alpha_k)g_k+\alpha_k\nabla \tf(x_{k+1},\xi_{k+1})$. This modification ensures the boundedness of $\{g_k\}$, and the resulting algorithm still retains the same rate of convergence as Algorithm \ref{alg2}.

Similar to Algorithm \ref{alg1}, we provide a novel convergence analysis for Algorithm \ref{alg2}. Specifically, by leveraging the structure of $G_k$, we first interpret Algorithm \ref{alg2} as an inexact projected gradient method applied to the associated feasibility problem and establish a convergence rate for the \emph{deterministic} feasibility violation. We then use this result, together with a carefully constructed potential function, to derive a convergence rate for the expected first-order stationarity violation. This new analysis framework allows us to establish a \emph{deterministic} convergence guarantee on the constraint violation.

To present the convergence results for Algorithm \ref{alg2}, we introduce the following assumption, which specifies a Lipschitz smoothness condition for problem \eqref{prob}.

\begin{assumption}\label{a-mm}
The function $f$  is $L_{\nabla f}$-smooth on $\mcX$, that is, 
\[
 \|\nabla f(u)-\nabla f(v)\| \leq L_{\nabla f} \|u-v\| \qquad \forall u,v\in\mcX.
\]
\end{assumption}

As remarked in Section \ref{sec:storm},  the average smoothness condition implies the Lipschitz smoothness condition, but the reverse implication generally does not hold. Therefore, Assumption \ref{a-mm} is weaker than Assumption \ref{a-storm} in general. 

We are now ready to present the convergence results for Algorithm \ref{alg2},  with the proof deferred to Subsection \ref{sec:proof-mm}. Specifically, we will establish convergence rates for the quantities introduced in \eqref{stationarity-criteria}.

\begin{thm}\label{thm:mm}
Suppose that Assumptions \ref{a1} and \ref{a-mm} hold, and $\{x_k\}$ is generated by Algorithm \ref{alg2}. Let $L$ be defined in \eqref{def-L}, $L_f$, $L_{\nabla f}$, $L_c$, $C_c$, $\sigma$, $\gamma$, $\theta$ and $Q_1^*$ be given in Assumptions \ref{a1} and  \ref{a-mm},  $g_1$  be given in Algorithm \ref{alg2}, and $\iota_k$ be the random variable uniformly generated in $\left\{\lceil k/2\rceil+1,\dots,k\right\}$ for $k\geq 2$. Then the following statements hold.
\begin{enumerate}[label=(\roman*)]
\item Suppose that $\theta\in[1,2)$  and its actual value is known. Let $\rho_k$, $\eta_k$ and $\alpha_k$ be  chosen as
\beq\label{def-para3}
\rho_k=k^{\frac{\theta}{4}},\quad \eta_k=k^{-\frac{1}{2}}/\log(k+2),\quad \alpha_k=k^{-\frac{1}{2}}.
\eeq
Then for all $k\geq2\tK_2$, we have
\begin{align*}
&\bbE\left[\dist^2\left(0,\nabla f(x_{\iota_k})+\rho_{\iota_k-1}\nabla c(x_{\iota_k})c(x_{\iota_k})+\mcN_\mcX(x_{\iota_k})\right)\right]\\
&\leq\frac{51\log(k+2)}{2(k-1)^{\frac{1}{2}}}\Bigg( f(x_1)+\frac{1}{2}\|c(x_1)\|^2-Q_1^*+\|g_1-\nabla f(x_1)\|^2+\frac{\theta(6-\theta) }{4(2-\theta)}C_2+\frac{1}{2}(\tK_2^{\frac{\theta}{4}}-1)C_c^2\nn\\
&\quad+\sigma^2(1+\log k)+(1+\log\tK_2)\big(L_{\nabla f}+\tK_2^{\frac{\theta}{4}}L+2\tK_2^{\frac{1}{2}}L_{\nabla f}^2\big)\big(L_f^2+C_c^2L_c^2\tK_2^{\frac{\theta}{2}}\big)\Bigg),\\
&\|c(x_{\iota_k})\|^2\leq2C_2(k/2)^{-\frac{1}{2}},
\end{align*}
where
\begin{align}
&\tK_2=\left\lceil\max\left\{e^2,64L_{\nabla f}^2,e^{8L_{\nabla f}^2},(8L)^{\frac{4}{2-\theta}},\left(e^{-1}\gamma^{-2}2^{2-\frac{\theta}{2}}\log(e^2+2)\right)^{\frac{4}{2-\theta}}\right\}\right\rceil,\label{def-tK3}\\
&C_2=\max\left\{1,\tK_2^{1/2}C_c^2/2,2^{2-\theta/2}L_f^2\gamma^{-2}\right\}.\label{def-C3}
\end{align}

\item Suppose that $\theta\geq1$. Let $\rho_k$, $\eta_k$ and $\alpha_k$ be chosen as
\beq\label{def-para4}
\rho_k=k^{\frac{1}{2}},\quad\eta_k=k^{-\frac{1}{2}}/(4\log(k+2)), \quad \alpha_k=k^{-\frac{1}{2}}.
\eeq
Then for all $k\geq2\tK_3$, we have
\begin{align*}
&\bbE\left[\dist^2\left(0,\nabla f(x_{\iota_k})+\rho_{\iota_k-1}\nabla c(x_{\iota_k})c(x_{\iota_k})+\mcN_\mcX(x_{\iota_k})\right)\right]\\
&\leq\frac{102\log(k+2)}{(k-1)^{\frac{1}{2}}}\Bigg( f(x_1)+\frac{1}{2}\|c(x_1)\|^2-Q_1^*+\|g_1-\nabla f(x_1)\|^2+\frac{1}{2} C_3\max\{1,k^{\frac{1}{2}-\frac{1}{\theta}}\}(1+\log k)\nn \\
&\quad+\frac{1}{2}\big(\tK_3^{\frac{1}{2}}-1\big)C_c^2+\sigma^2(1+\log k)+\frac{1}{16}(1+\log\tK_3)\big(L_{\nabla f}+\tK_3^{\frac{1}{2}}L+2\tK_3^{\frac{1}{2}}L_{\nabla f}^2\big)\big(L_f^2+C_c^2L_c^2\tK_3\big)\Bigg),\\
&\|c(x_{\iota_k})\|^2\leq2C_3(k/2)^{-1/\theta},
\end{align*}
where
\begin{align}
&\tK_3=\left\lceil\max\left\{4L_{\nabla f}^2,e^{2L_{\nabla f}^2},e^{2L},e^{2\theta},\left(e^{-1}\gamma^{-2}2^{6-\theta}\log(e^{2\theta}+2)\right)^{2\theta}\right\}\right\rceil,\label{def-tK4}\\
&C_3=\max\left\{1,\tK_3^{1/\theta}C_c^2/2,2^{3-\theta}L_f^2\gamma^{-2}\right\}.\label{def-nuC4}
\end{align}
\end{enumerate}
\end{thm}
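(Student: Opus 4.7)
The overall plan is to follow the two-stage framework already outlined for Algorithm \ref{alg1}, but with the truncated Polyak momentum handled in place of the recursive one. The first stage treats the update as an \emph{inexact} projected gradient step for the deterministic feasibility problem $\min_{x\in X}\tfrac12\|c(x)\|^2$: writing $x_{k+1}=\Pi_X\bigl(x_k-\eta_k\rho_k\nabla c(x_k)c(x_k)-\eta_k g_k\bigr)$ and using the $L$-smoothness of $\tfrac12\|c(\cdot)\|^2$ (from \eqref{def-L}) together with the projection inequality, I would obtain a per-step inequality roughly of the form
\[
\tfrac12\|c(x_{k+1})\|^2\le \tfrac12\|c(x_k)\|^2 -\tfrac{1}{2\eta_k\rho_k}\|x_{k+1}-x_k\|^2+\eta_k\rho_k^{-1}\|g_k\|^2 +\tfrac{L}{2}\|x_{k+1}-x_k\|^2,
\]
with the cross term $\langle g_k,x_k-x_{k+1}\rangle$ absorbed by Young's inequality and the boundedness $\|g_k\|\le L_f$ coming from the projection onto $\cB(L_f)$. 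The quantity $\mathrm{dist}(0,\nabla c(x_k)c(x_k)+\mcN_X(x_k))$ can then be lower bounded by $\|x_k-x_{k+1}\|/(\eta_k\rho_k)$ up to controllable remainders, so invoking the error bound \eqref{err-cond} converts the recursion into one of the form $\|c(x_{k+1})\|^2\le (1-a_k)\|c(x_k)\|^2+b_k$ with $a_k\asymp \eta_k\rho_k\gamma^2\|c(x_k)\|^{2(\theta-1)}$ and $b_k\asymp \eta_k\rho_k^{-1}L_f^2$. Choosing $\tK_2$ or $\tK_3$ large enough to enter a regime where $a_k$ dominates and then induction on $k$ yields the deterministic rates $\|c(x_k)\|^2=\mcO(k^{-1/2})$ in (i) and $\mcO(k^{-1/\theta})$ in (ii); the constants $C_2$, $C_3$ are tuned to make the induction go through.

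For the second stage I would construct a potential function of the form $\Phi_k=Q_{\rho_k}(x_k)+\beta_k\|g_k-\nabla f(x_k)\|^2$ with appropriately chosen weights $\beta_k$. Using the $L_{\nabla f}$-smoothness of $f$ (Assumption \ref{a-mm}) and the $L$-smoothness of $\tfrac12\|c(\cdot)\|^2$, together with the descent lemma applied to $Q_{\rho_k}$, one gets
\[
Q_{\rho_k}(x_{k+1})\le Q_{\rho_k}(x_k)-\tfrac{1}{2\eta_k}\|x_{k+1}-x_k\|^2+\tfrac{\eta_k}{2}\|g_k-\nabla f(x_k)\|^2+\tfrac{L_{\nabla f}+\rho_kL}{2}\|x_{k+1}-x_k\|^2,
\]
plus a penalty-adjustment term $\tfrac12(\rho_{k+1}-\rho_k)\|c(x_{k+1})\|^2$ when passing from $Q_{\rho_k}$ to $Q_{\rho_{k+1}}$; this last term is exactly where the deterministic feasibility rate from stage one is used to keep the telescoping sum bounded. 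The variance-reduction property of the truncated Polyak scheme, which is the analogue of Lemma \ref{l-exp2}, gives (after taking conditional expectation) $\bbE_k[\|g_{k+1}-\nabla f(x_{k+1})\|^2]\le (1-\alpha_k)\|g_k-\nabla f(x_k)\|^2+2\alpha_k^2\sigma^2+2L_{\nabla f}^2\|x_{k+1}-x_k\|^2/\alpha_k$; choosing $\beta_k\asymp \eta_k/\alpha_k$ makes the resulting $\bbE[\Phi_{k+1}-\Phi_k]$ bound have sign-definite terms in $\|x_{k+1}-x_k\|^2$ and $\|g_k-\nabla f(x_k)\|^2$, which become the stationarity surrogate.

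Summing this potential inequality from $\lceil k/2\rceil+1$ to $k$, dividing by the number of terms, and recognizing the left-hand side as $\bbE[\mathrm{dist}^2(0,\nabla f(x_{\iota_k})+\rho_{\iota_k-1}\nabla c(x_{\iota_k})c(x_{\iota_k})+\mcN_X(x_{\iota_k}))]$ up to a factor proportional to $1/(\eta_{\iota_k}^2)$ would yield the stated $\tmcO(k^{-1/2})$ expected-stationarity rate in both parts. The two cases diverge only in the choice of $(\rho_k,\eta_k)$: part (i) uses $\rho_k=k^{\theta/4}$ to match the feasibility rate $\|c(x_k)\|^2=\mcO(k^{-1/2})$ with $\theta<2$, which is essential for the telescoping of the $(\rho_{k+1}-\rho_k)\|c(x_{k+1})\|^2$ contributions; part (ii) falls back to $\rho_k=k^{1/2}$ and the weaker feasibility rate $\|c(x_k)\|^2=\mcO(k^{-1/\theta})$, and the penalty-telescope term must instead be controlled by the $\tfrac12 C_3\max\{1,k^{1/2-1/\theta}\}(1+\log k)$ contribution appearing in the stated bound.

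The main obstacle I expect is the tight coordination of the three sequences $\rho_k$, $\eta_k$, $\alpha_k$ so that three competing effects balance simultaneously: the step size $\eta_k\rho_k$ must be small enough for descent of $\tfrac12\|c(\cdot)\|^2$ despite the $L$-smoothness term $\rho_k L\|x_{k+1}-x_k\|^2$, yet large enough (via the error bound \eqref{err-cond}) to shrink $\|c(x_k)\|$ at the advertised rate; at the same time $\alpha_k$ must be chosen to make the Polyak variance shrink faster than $\eta_k$ grows in $\beta_k$; and finally the penalty-increment $(\rho_{k+1}-\rho_k)\|c(x_{k+1})\|^2$ must telescope to a finite or logarithmic quantity. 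Verifying the explicit thresholds $\tK_2$, $\tK_3$ and constants $C_2$, $C_3$ so that the induction for the feasibility recursion closes for all $k\ge 2\tK_j$ is the technically delicate bookkeeping step, but it is a routine consequence of the parameter bounds in \eqref{def-tK3}–\eqref{def-nuC4} once the two-stage framework is set up correctly.
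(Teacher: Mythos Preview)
Your proposal is correct and follows essentially the same two-stage approach as the paper's proof. Two minor differences worth noting: the paper takes the potential weight to be the constant $\beta_k\equiv 1$ (using $\eta_k\le\alpha_k$ so that $(1-\alpha_k)+\eta_k\le 1$ directly), and it applies the error bound \eqref{err-cond} at $x_{k+1}$ rather than at $x_k$, since the projection optimality condition naturally yields an element of $\mcN_X(x_{k+1})$ --- this produces the implicit recursion $h(x_{k+1})+c\,\rho_k\eta_k\,[h(x_{k+1})]^{\theta}\le h(x_k)+L_f^2\rho_k^{-1}\eta_k/2$ (Lemma~\ref{l-rec2}) rather than your explicit $(1-a_k)$-form, and sidesteps the difficulty that $\mcN_X(x_k)$ is not directly accessible from the step $x_k\mapsto x_{k+1}$.
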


\begin{rem}
For $\theta \in [1,2)$, the choices of $\rho_k$, $\eta_k$, and $\alpha_k$ provided in \eqref{def-para3} and \eqref{def-para4} guarantee the same order of convergence rates for the quantities in \eqref{stationarity-criteria}, regardless of whether the actual value of $\theta$ is known. However, the constant $\tK_2$ generally depends less on $L$ compared to $\tK_3$. Therefore, when $\theta \in [1,2)$ and its actual value is known, the parameters $\rho_k$, $\eta_k$, and $\alpha_k$ specified in \eqref{def-para3} are typically the better choice.
\end{rem}

The following result is an immediate consequence of Theorem \ref{thm:mm}. It provides iteration complexity results for Algorithm \ref{alg2} to find an $\epsilon$-SFSSP $x_{\iota_k}$ of problem \eqref{prob} that satisfies \eqref{kkt}.

\begin{coro}\label{coro:mm}
Suppose that Assumptions \ref{a1} and \ref{a-mm} hold, and $\{x_k\}$ is generated by Algorithm \ref{alg2}. Let $\theta$ be given in Assumption \ref{a1}, and $\iota_k$  be the random variable uniformly generated from $\left\{\lceil k/2\rceil+1,\dots,k\right\}$ for $k\geq 2$. Then the following statements hold.
\begin{enumerate}[label=(\roman*)]
\item Suppose that $\theta\in[1,2)$  and its actual value is known. Let $\rho_k$, $\eta_k$ and $\alpha_k$ be  chosen as in  \eqref{def-para3}. Then for any $\epsilon>0$, there exists some $T=\tmcO(\epsilon^{-4})$ such that \eqref{kkt} holds for all $k\geq T$.

\item Suppose that $\theta\geq1$.
Let $\rho_k$, $\eta_k$ and $\alpha_k$ be chosen as in  \eqref{def-para4}. Then for any $\epsilon>0$, there exists some $T=\tmcO\left(\epsilon^{-\max\{4,2\theta\}}\right)$ such that \eqref{kkt} holds for all $k\geq T$.
\end{enumerate}
\end{coro}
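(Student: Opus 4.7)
The plan is to read off the conclusions of Theorem \ref{thm:mm} and invert the rates to obtain iteration counts. Both parts share the same structure---feasibility gives a deterministic polynomial decay for $\|c(x_{\iota_k})\|^2$, and stationarity gives an expected polynomial (up to log factors) decay for $\bbE[\dist^2(\cdot)]$---so I would treat them in parallel, using Jensen's inequality $\bbE[\dist(\cdot)] \leq \sqrt{\bbE[\dist^2(\cdot)]}$ to convert the second-moment rate into the first-moment rate demanded by \eqref{kkt}.

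For part (i), Theorem \ref{thm:mm}(i) gives $\|c(x_{\iota_k})\|^2 \leq 2C_2 (k/2)^{-1/2}$, so $\|c(x_{\iota_k})\| \leq \epsilon$ as soon as $k = \mcO(\epsilon^{-4})$. For the stationarity quantity, I would inspect the displayed bound and note that, since $\tK_2$ and all associated constants are independent of $k$, the only $k$-dependent factors are the overall prefactor $\log(k+2)/(k-1)^{1/2}$ and the inner $\sigma^2(1+\log k)$ term. Their product is $\tmcO(k^{-1/2})$, so $\bbE[\dist^2(\cdot)] = \tmcO(k^{-1/2})$ and Jensen gives $\bbE[\dist(\cdot)] = \tmcO(k^{-1/4})$, which falls below $\epsilon$ at $k = \tmcO(\epsilon^{-4})$. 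Combining the two thresholds (and accommodating the burn-in $k \geq 2\tK_2$, which is a fixed constant) yields $T = \tmcO(\epsilon^{-4})$.

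For part (ii), the same recipe applies but with a case split on $\theta$. Feasibility gives $\|c(x_{\iota_k})\|^2 \leq 2C_3(k/2)^{-1/\theta}$, so $k = \mcO(\epsilon^{-2\theta})$ suffices. The stationarity bound now carries the extra factor $C_3\max\{1, k^{1/2 - 1/\theta}\}(1+\log k)$. For $\theta \leq 2$ this factor is polylogarithmic in $k$, so the rate remains $\tmcO(k^{-1/2})$; for $\theta > 2$ the factor contributes $k^{1/2 - 1/\theta}$, and the rate becomes $\tmcO(k^{-1/\theta})$. Writing the rate uniformly as $\bbE[\dist^2(\cdot)] = \tmcO(k^{-\min\{1/2,\, 1/\theta\}})$, Jensen's inequality gives $\bbE[\dist(\cdot)] = \tmcO(k^{-\min\{1/4,\, 1/(2\theta)\}})$, which is $\leq \epsilon$ once $k = \tmcO(\epsilon^{-\max\{4,\, 2\theta\}})$. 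This threshold already dominates $\epsilon^{-2\theta}$, so the overall complexity is $T = \tmcO(\epsilon^{-\max\{4,\, 2\theta\}})$.

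No substantive obstacle is expected, as everything reduces to book-keeping on the bounds already supplied by Theorem \ref{thm:mm}. The most delicate step is identifying the $k$-dependence inside the parenthesised right-hand sides---verifying that $\tK_2$, $\tK_3$, $C_2$, $C_3$, and all the $L$-type constants are fixed quantities, so that only the explicit $\log$ and power terms contribute asymptotically---together with handling the case split at $\theta = 2$ in part (ii), where the $\max$ factor changes character and determines the final exponent $\max\{4, 2\theta\}$.
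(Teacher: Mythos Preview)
Your proposal is correct and matches the paper's approach: the paper states the corollary as ``an immediate consequence of Theorem~\ref{thm:mm}'' without giving an explicit proof, and your plan---inverting the rates from Theorem~\ref{thm:mm}, applying Jensen's inequality to pass from second to first moments, and handling the $\theta\lessgtr 2$ case split in part (ii)---is exactly the natural way to extract the complexity bounds.
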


\begin{rem} 
\begin{enumerate}[label=(\roman*)]
\item Since Algorithm \ref{alg2} requires one sample, one gradient evaluation of $c$, and one gradient evaluation of $\tf$ per iteration, its sample complexity and first-order operation complexity are of the same order as its iteration complexity, which is stated in Corollary \ref{coro:mm}. 
\item For $\theta \in [1,2)$, Algorithm \ref{alg2} with $\rho_k$, $\eta_k$ and $\alpha_k$ chosen as in  \eqref{def-para3} or \eqref{def-para4} achieves a sample and first-order operation complexity of  
$\tmcO(\epsilon^{-4})$ for finding an $\epsilon$-SFSSP of  \eqref{prob}. This complexity matches, up to a logarithmic factor, the complexity achieved by the methods in \cite{gao2024non, ghadimi2013stochastic} for problem \eqref{prob} with $c=0$. However, it is worse than the complexity of 
$\tmcO(\epsilon^{-2-\theta})$ achieved by Algorithm \ref{alg1} with $\rho_k$, $\eta_k$ and $\alpha_k$ chosen as in \eqref{def-para2} with $\hat\theta=\theta$. It should be noted that the latter complexity is obtained under stronger assumptions, since Assumption \ref{a-storm} is more restrictive than Assumption \ref{a-mm}.
\item For $\theta\ge 2$, Algorithm \ref{alg2} with $\rho_k$, $\eta_k$ and $\alpha_k$ chosen as in \eqref{def-para4} achieves a sample and first-order operation complexity of  
$\tmcO\left(\epsilon^{-2\theta}\right)$ for finding an $\epsilon$-SFSSP of  \eqref{prob}. This complexity matches that achieved by Algorithm \ref{alg1} with $\rho_k$, $\eta_k$ and $\alpha_k$ chosen as in \eqref{def-para2} with $\hat\theta=\theta$ or $2$, but under weaker assumptions, since Assumption \ref{a-mm} is less restrictive than Assumption \ref{a-storm}. Moreover, Algorithm \ref{alg2} requires only one gradient evaluation of $\tf$ per iteration, while Algorithm \ref{alg1} requires two.
\end{enumerate}
\end{rem}

\section{Proof of the main results}\label{sec:proof}
In this section we provide a proof of our main results presented in Sections \ref{sec:storm} and \ref{sec:mm}, which are particularly Theorems \ref{thm:storm} and \ref{thm:mm}.

\subsection{Proof of the main result in Section \ref{sec:storm}}\label{sec:proof-storm}

In this subsection, we first establish a convergence rate for the \emph{deterministic} feasibility violation by interpreting Algorithm \ref{alg1} as an inexact projected gradient method applied to the associated feasibility problem (see Lemmas \ref{l-rec} and \ref{l-cnst2}). This result, combined with several technical lemmas and a carefully constructed potential function, is then used to prove Theorem \ref{thm:storm}.

For notational convenience, we define
\beq\label{def-h}
h(x) := \frac{1}{2}\|c(x)\|^2.
\eeq
One can observe from Assumption \ref{a1}(iv) that $h$ is $L$-smooth on $\mcX$, where $L$ is given in  \eqref{def-L}.

The following lemma establishes a relationship between $h(x_{k+1})$ and $h(x_k)$, which will  be used to derive bounds for $\|c(x_k)\|^2$.

\begin{lemma}\label{l-rec} 
Suppose that Assumption \ref{a1} holds, and $x_{k+1}$ is generated by Algorithm \ref{alg1} for some $k\geq1$ with $\rho_k\eta_k\leq(\sqrt{5}-1)/(2L)$.  Then we have
\[
h(x_{k+1})+2^{\theta-2}\gamma^2\rho_k\eta_k[h(x_{k+1})]^{\theta}\leq h(x_k)+L_f^2\rho_k^{-1}\eta_k/2,
\]
where $\rho_k$ and $\eta_k$ are given in Algorithm \ref{alg1}, $L_f$, $\gamma$ and $\theta$ are given in Assumption \ref{a1},  and $L$ and $h$ are defined in \eqref{def-L} and \eqref{def-h}, respectively.
\end{lemma}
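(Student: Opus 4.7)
The plan is to view one step of Algorithm \ref{alg1} as an inexact projected gradient step on the feasibility problem $\min_{x\in\mathcal{X}} h(x)$, so the proof will combine a sufficient-decrease estimate for $h$ (obtained from $L$-smoothness of $h$ and the projection optimality condition) with the error bound \eqref{err-cond} applied at $x_{k+1}$, where the distance to $\nabla h(x_{k+1})+\mathcal{N}_\mathcal{X}(x_{k+1})$ is controlled by a specific element constructed from the projection step. The condition $\rho_k\eta_k\leq(\sqrt{5}-1)/(2L)$, which is equivalent to the polynomial identity $L\rho_k\eta_k(1+L\rho_k\eta_k)\leq 1$, is what will close the algebraic combination.

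First, I would derive a descent-type inequality. Writing $G_k=g_k+\rho_k\nabla h(x_k)$ and using the projection first-order condition $(x_k-x_{k+1})/\eta_k-G_k\in\mathcal{N}_\mathcal{X}(x_{k+1})$ together with $x_k\in\mathcal{X}$, one gets $\langle G_k,x_{k+1}-x_k\rangle\leq -\|x_{k+1}-x_k\|^2/\eta_k$. Combining with $L$-smoothness of $h$, the bound $\|g_k\|\leq L_f$ from the truncation in \eqref{gk}, and Young's inequality on $L_f\|x_{k+1}-x_k\|/\rho_k$ with weight $1/(\eta_k\rho_k)$ yields
\[
h(x_{k+1})\leq h(x_k)+\frac{L_f^2\eta_k}{2\rho_k}-\frac{1-L\eta_k\rho_k}{2\eta_k\rho_k}\|x_{k+1}-x_k\|^2,
\]
so the ``budget'' $h(x_k)-h(x_{k+1})+L_f^2\eta_k/(2\rho_k)$ is at least $(1-L\eta_k\rho_k)\|x_{k+1}-x_k\|^2/(2\eta_k\rho_k)\geq 0$.

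Next, I would apply \eqref{err-cond} at $x_{k+1}$. Since $\mathcal{N}_\mathcal{X}(x_{k+1})$ is a cone, scaling the projection residual by $1/\rho_k$ gives $\nabla h(x_{k+1})-\nabla h(x_k)+(x_k-x_{k+1})/(\eta_k\rho_k)-g_k/\rho_k\in\nabla h(x_{k+1})+\mathcal{N}_\mathcal{X}(x_{k+1})$, and then $L$-smoothness of $h$ and $\|g_k\|\leq L_f$ yield
\[
\gamma\|c(x_{k+1})\|^\theta\leq\frac{(1+L\eta_k\rho_k)\|x_{k+1}-x_k\|}{\eta_k\rho_k}+\frac{L_f}{\rho_k}.
\]
Squaring, multiplying by $\rho_k\eta_k/4$, and using $\|c\|^{2\theta}=(2h)^\theta$, the quantity $2^{\theta-2}\gamma^2\rho_k\eta_k[h(x_{k+1})]^\theta$ is controlled by a quadratic expression in $\|x_{k+1}-x_k\|$ and $\eta_kL_f$. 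Adding this to the descent bound and simplifying reduces the target inequality to a single inequality whose leading coefficient in $\|x_{k+1}-x_k\|^2$ involves $(L\eta_k\rho_k)^2+L\eta_k\rho_k-1$, and this is exactly the quantity that becomes nonpositive when $L\eta_k\rho_k\leq(\sqrt{5}-1)/2$.

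The main obstacle I expect is the bookkeeping in the final step, because the perturbation term $L_f/\rho_k$ appearing inside the error bound interacts nontrivially with the noise term $L_f^2\eta_k/(2\rho_k)$ on the right-hand side of the claim; a naive split via Cauchy--Schwarz $(a+b)^2\leq 2a^2+2b^2$ doubles this noise and spoils the bound. The right balance requires the Young's constant in the descent step to be matched with the splitting used on the squared error bound, and the golden-ratio-type condition $L\eta_k\rho_k\leq(\sqrt{5}-1)/2$ is precisely tight for closing this balance; any larger step-size product breaks the algebra.
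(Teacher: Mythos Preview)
Your overall plan is right, but the concrete organization you describe has a real gap: applying Young's inequality to the cross term in the descent step and then the triangle inequality to the normal-cone element before squaring throws away exactly the piece that makes the algebra close. In the paper's proof one writes $\widetilde G_k=\rho_k^{-1}G_k$, keeps the element $\nabla h(x_{k+1})+\tilde\eta_k^{-1}(x_k-x_{k+1})-\widetilde G_k$ intact, and expands its squared norm as
\[
2\tilde\eta_k^{-2}\|x_{k+1}-x_k\|^2+4\tilde\eta_k^{-1}\langle \widetilde G_k-\nabla h(x_k),x_{k+1}-x_k\rangle+2\|\widetilde G_k-\nabla h(x_k)\|^2+2\|\nabla h(x_{k+1})-\nabla h(x_k)\|^2.
\]
After multiplying by $\tilde\eta_k/4$, the cross term becomes $\langle \widetilde G_k-\nabla h(x_k),x_{k+1}-x_k\rangle$, which cancels \emph{exactly} against the cross term $\langle\nabla h(x_k)-\widetilde G_k,x_{k+1}-x_k\rangle$ left in the descent inequality. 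This cancellation is what produces the $d^2$-coefficient $L^2\tilde\eta_k-\tilde\eta_k^{-1}+L=\tilde\eta_k^{-1}\big((L\tilde\eta_k)^2+L\tilde\eta_k-1\big)$ and leaves the entire noise budget $\tfrac{\tilde\eta_k}{2}\|\widetilde G_k-\nabla h(x_k)\|^2\le L_f^2\eta_k/(2\rho_k)$ coming from a single place.

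In your route, the squared triangle bound already contributes at least $L_f^2\eta_k/(4\rho_k)$ of noise plus a positive linear term $(1+L\rho_k\eta_k)L_f\|x_{k+1}-x_k\|/(2\rho_k)$, on top of the $L_f^2\eta_k/(2\rho_k)$ you have already spent in the descent. If you try to rebalance by taking Young's weight $\lambda$ in the descent, matching the $u^2$-budget forces $\lambda\le \rho_k\eta_k/2$, but then $1/(2\lambda)\ge (\rho_k\eta_k)^{-1}$ appears in the $d^2$-coefficient and the resulting quadratic in $d$ cannot be made nonpositive for any $s=L\rho_k\eta_k\in(0,(\sqrt5-1)/2]$; one checks that the discriminant condition reduces to $s^2+5s+3\le 0$, which is false for $s>0$. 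So the coefficient you announce, $(L\eta_k\rho_k)^2+L\eta_k\rho_k-1$, is not what your decomposition actually yields. The fix is simple: do not use Young's in the descent and do not take the triangle inequality before squaring; keep $\langle g_k/\rho_k,\,x_{k+1}-x_k\rangle$ explicit in both places and let the two occurrences cancel.
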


\begin{proof}
Let $G_k$ be given in Algorithm \ref{alg1}. For convenience, we define
\beq\label{l3-e1}
\widetilde G_k=\rho_k^{-1}G_k,\quad\tilde\eta_k=\rho_k\eta_k.
\eeq
 It then follows from these, \eqref{def-h}, and the expression of $x_{k+1}$ in Algorithm \ref{alg1} that
\beq\label{l3-x}
x_{k+1}=\Pi_\mcX(x_k-\eta_kG_k)=\Pi_\mcX(x_k-\tilde\eta_k\widetilde G_k),
\eeq
which implies that
\beq \label{subprob-opt1}
0\in x_{k+1}-x_k+\tilde\eta_k\widetilde G_k+\mcN_\mcX(x_{k+1})\quad\Rightarrow\quad \nabla h(x_{k+1})+\tilde\eta_k^{-1}(x_k-x_{k+1})-\widetilde G_k\in\nabla h(x_{k+1})+\mcN_\mcX(x_{k+1}).
\eeq
Using this, \eqref{def-h} and Assumption \ref{a1}(iv), we have
\begin{align}
2^\theta\gamma^2[h(x_{k+1})]^\theta&\ =\gamma^2\|c(x_{k+1})\|^{2\theta}\leq\dist^2\left(0,\nabla c(x_{k+1})c(x_{k+1})+\mcN_\mcX(x_{k+1})\right) \nn \\
&\overset{\eqref{def-h}}{=}\dist^2\left(0,\nabla h(x_{k+1})+\mcN_\mcX(x_{k+1})\right)\overset{\eqref{subprob-opt1}}\leq\|\nabla h(x_{k+1})+\tilde\eta_k^{-1}(x_k-x_{k+1})-\widetilde G_k\|^2\nn\\
&\ \leq2\|\tilde\eta_k^{-1}(x_k-x_{k+1})+\nabla h(x_k)-\widetilde G_k\|^2+2\|\nabla h(x_{k+1})-\nabla h(x_k)\|^2\nn\\
&\ =2\tilde\eta_k^{-2}\|x_{k+1}-x_k\|^2+4\tilde\eta_k^{-1}\langle\widetilde G_k-\nabla h(x_k), x_{k+1}-x_k\rangle+2\|\widetilde G_k-\nabla h(x_k)\|^2\nn\\
&\ \quad +2\|\nabla h(x_{k+1})-\nabla h(x_k)\|^2\nn\\
&\ \leq2(\tilde\eta_k^{-2}+L^2)\|x_{k+1}-x_k\|^2+4\tilde\eta_k^{-1}\langle\widetilde G_k-\nabla h(x_k), x_{k+1}-x_k\rangle+2\|\widetilde G_k-\nabla h(x_k)\|^2,\label{l3-e2}
\end{align}
where the first inequality follows from Assumption \ref{a1}(iv), the second inequality is due to the convexity of $\|\cdot\|^2$, and the last inequality follows from the $L$-smoothness of $h$. In addition,  by \eqref{l3-x} and $x_k\in\mcX$, one has
\[
\langle x_{k+1}-x_k+\tilde\eta_k\widetilde G_k,x_k-x_{k+1}\rangle\geq0\quad\Rightarrow\quad \langle \widetilde G_k,x_{k+1}-x_k\rangle\leq-\tilde\eta_k^{-1}\|x_{k+1}-x_k\|^2.
\]
This together with the $L$-smoothness of $h$ yields
\begin{align*}
h(x_{k+1})& \leq h(x_k)+\langle\nabla h(x_k),x_{k+1}-x_k\rangle +\frac{L}{2}\|x_{k+1}-x_k\|^2\nn\\
&=h(x_k)+\langle\widetilde G_k,x_{k+1}-x_k\rangle+\langle\nabla h(x_k)-\widetilde G_k,x_{k+1}-x_k\rangle +\frac{L}{2}\|x_{k+1}-x_k\|^2\nn\\
& \leq h(x_k)-\tilde\eta_k^{-1}\|x_{k+1}-x_k\|^2+\langle\nabla h(x_k)-\widetilde G_k,x_{k+1}-x_k\rangle +\frac{L}{2}\|x_{k+1}-x_k\|^2.
\end{align*}
Using this and \eqref{l3-e2}, we obtain that
\beq\label{l3-e3}
h(x_{k+1})+2^{\theta-2}\gamma^2\tilde\eta_k[h(x_{k+1})]^{\theta}\leq h(x_k)+\frac{1}{2}(L^2\tilde\eta_k-\tilde\eta_k^{-1}+L)\|x_{k+1}-x_k\|^2+\frac{\tilde\eta_k}{2}\|\widetilde G_k-\nabla h(x_k)\|^2.
\eeq
Observe from \eqref{l3-e1} and $\rho_k\eta_k\leq(\sqrt{5}-1)/(2L)$ that $L\tilde\eta_k=L\rho_k\eta_k\leq(\sqrt{5}-1)/2$, which implies that
\beq\label{l3-e4}
L^2\tilde\eta_k-\tilde\eta_k^{-1}+L=\tilde\eta_k^{-1}( L^2\tilde\eta_k^2+L\tilde\eta_k-1)\leq0.
\eeq
Notice from the expression of $g_k$ in Algorithm \ref{alg1} that $g_k\in\cB(L_f)$ and hence $\|g_k\|\leq L_f$. 
Also, observe from Algorithm \ref{alg1} and \eqref{def-h} that $G_k=g_k+\rho_k\nabla h(x_k)$. Using these and \eqref{l3-e1}, we have
\[
\|\widetilde G_k-\nabla h(x_k)\|= \|\rho_k^{-1}G_k-\nabla h(x_k)\| =\|\rho_k^{-1}(g_k+\rho_k\nabla h(x_k))-\nabla h(x_k)\|= \rho_k^{-1}\|g_k\|\leq\rho_k^{-1}L_f.
\]
It then follows from this, \eqref{l3-e3} and \eqref{l3-e4} that 
\[
h(x_{k+1})+2^{\theta-2}\gamma^2\tilde\eta_k[h(x_{k+1})]^\theta\leq h(x_k)+L_f^2\tilde\eta_k/(2\rho_k^2).
\]
This and the definition of $\tilde\eta_k$ in \eqref{l3-e1} imply that the conclusion of this lemma holds.
\end{proof}

The next lemma derives a bound for $\|c(x_k)\|^2$ under the choice of  $\rho_k$, $\eta_k$ and $\alpha_k$ in Algorithm \ref{alg1}.

\begin{lemma}\label{l-cnst2}
Let $\nu$, $\tK_1$, and $C_1$ be given in \eqref{def-para2}, \eqref{def-tK2} and \eqref{def-nuC2}, respectively. Suppose that Assumption \ref{a1} holds, and $\{x_k\}$ is generated by Algorithm \ref{alg1} with $\{\rho_k\}$, $\{\eta_k\}$ and $\{\alpha_k\}$ given in \eqref{def-para2}. Then  we have $\|c(x_k)\|^2\leq 2C_1k^{-2\nu/\theta}$ for all $k\geq\tK_1$.
\end{lemma}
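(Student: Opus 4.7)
\medskip

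\noindent\textbf{Proof plan for Lemma \ref{l-cnst2}.} The strategy is a straightforward induction on $k\ge \tK_1$, using Lemma \ref{l-rec} as the driving recursion. Writing $\beta_k=2^{\theta-2}\gamma^2\rho_k\eta_k$ and $\gamma_k=L_f^2\rho_k^{-1}\eta_k/2$, and substituting the parameters from \eqref{def-para2}, one has $\beta_k=2^{\theta-4}\gamma^2/\log(k+2)$ and $\gamma_k=L_f^2k^{-2\nu}/(8\log(k+2))$. Before invoking Lemma \ref{l-rec}, I would first check its hypothesis: since $\tK_1\ge e^{2L}$ by \eqref{def-tK2}, for all $k\ge\tK_1$ we have $\rho_k\eta_k=1/(4\log(k+2))\le 1/(8L)\le(\sqrt{5}-1)/(2L)$, so the recursion
\[
h(x_{k+1})+\beta_k[h(x_{k+1})]^\theta\ \le\ h(x_k)+\gamma_k
\]
is available throughout the induction. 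The base case $k=\tK_1$ uses the pointwise bound $\|c(x)\|\le C_c$ from Assumption \ref{a1}(iv), yielding $h(x_{\tK_1})\le C_c^2/2\le C_1\tK_1^{-2\nu/\theta}$ by the choice $C_1\ge\tK_1^{2\nu/\theta}C_c^2/2$ in \eqref{def-nuC2}.

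\medskip

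For the inductive step, assume $h(x_k)\le C_1k^{-2\nu/\theta}$ and proceed by contradiction: suppose $h(x_{k+1})>C_1(k+1)^{-2\nu/\theta}$. Then $[h(x_{k+1})]^\theta>C_1^\theta(k+1)^{-2\nu}$, which combined with the recursion and the inductive hypothesis gives
\[
\beta_k C_1^\theta(k+1)^{-2\nu}-\gamma_k\ <\ C_1\bigl[k^{-2\nu/\theta}-(k+1)^{-2\nu/\theta}\bigr].
\]
I would then show this inequality cannot hold, by bounding the LHS below and the RHS above. Since $C_1\ge 1$ and $\theta\ge 1$ imply $C_1^\theta\ge C_1\ge 2^{3-\theta}L_f^2\gamma^{-2}$ (using \eqref{def-nuC2}), one has $\beta_k C_1^\theta(k+1)^{-2\nu}\ge L_f^2(k+1)^{-2\nu}/(2\log(k+2))$. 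The elementary inequality $(k+1)^{-2\nu}\ge k^{-2\nu}/2$, valid for $k\ge 1$ when $2\nu\le 1$, then yields
\[
\beta_k C_1^\theta(k+1)^{-2\nu}-\gamma_k\ \ge\ \frac{L_f^2k^{-2\nu}}{8\log(k+2)}.
\]
On the other hand, the mean value theorem applied to $t\mapsto t^{-2\nu/\theta}$ gives
$C_1[k^{-2\nu/\theta}-(k+1)^{-2\nu/\theta}]\le (2\nu C_1/\theta)k^{-2\nu/\theta-1}$.

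\medskip

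Combining, the contradiction reduces to verifying
\[
k^{\,1-2\nu(\theta-1)/\theta}\ \ge\ \frac{16\nu C_1\log(k+2)}{\theta L_f^2}.
\]
Since $\nu\le 1/2$ and $\theta\ge 1$, the exponent $\tau:=1-2\nu(\theta-1)/\theta$ is strictly positive (at worst equal to $1/\theta$ when $\nu=1/2$, and equal to $1$ when $\theta=1$), so $k^\tau/\log(k+2)\to\infty$. The main technical obstacle is simply to certify that the last term in \eqref{def-tK2}, namely $(e^{-1}\gamma^{-2}2^{6-\theta}\log(e^{2\theta}+2))^{2\theta}$, together with the threshold $e^{2\theta}$, is large enough to absorb both the constants and the logarithmic factor uniformly in the admissible ranges of $\nu$ and $\theta$; this is a direct but careful monotonicity check on $\log(k+2)/k^\tau$. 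Once this is confirmed, the strict inequality above fails, contradicting the assumption $h(x_{k+1})>C_1(k+1)^{-2\nu/\theta}$, which closes the induction and gives $\|c(x_k)\|^2=2h(x_k)\le 2C_1k^{-2\nu/\theta}$ for all $k\ge\tK_1$.
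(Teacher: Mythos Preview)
Your induction skeleton, base case, and use of Lemma \ref{l-rec} are all fine and match the paper. The gap is in the inductive step, specifically in how you lower bound the ``good'' term $\beta_k C_1^\theta(k+1)^{-2\nu}$. You replace $C_1^\theta$ by the constant $2^{3-\theta}L_f^2\gamma^{-2}$ (the third entry in the max defining $C_1$), obtaining a lower bound that no longer carries any factor of $C_1$. But the right-hand side still carries $C_1$ through the mean-value bound $(2\nu C_1/\theta)k^{-2\nu/\theta-1}$. The resulting condition
\[
k^{\tau}\ \ge\ \frac{16\nu C_1\log(k+2)}{\theta L_f^2}
\]
is therefore \emph{not} verifiable from the definition of $\tK_1$ in \eqref{def-tK2}: the constant $C_1$ in \eqref{def-nuC2} may equal $\tK_1^{2\nu/\theta}C_c^2/2$, which depends on $\tK_1$ itself and on $C_c$, neither of which is tied to $L_f$ in the way your inequality requires. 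Concretely, when $\nu=1/2$ (so $\tau=1/\theta$) and $C_1=\tK_1^{1/\theta}C_c^2/2$, evaluating your condition at $k=\tK_1$ forces $1\ge 4C_c^2\log(\tK_1+2)/(\theta L_f^2)$, which fails whenever $C_c$ is large relative to $L_f$. The ``direct but careful monotonicity check'' you defer to cannot close this because the constants are genuinely circular.

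The fix is exactly what the paper does: instead of cashing in $C_1^\theta\ge 2^{3-\theta}L_f^2\gamma^{-2}$, use the weaker bound $C_1^\theta\ge C_1$ (valid since $C_1\ge 1$ and $\theta\ge 1$) to keep a factor of $C_1$ in the leading term. After the same mean-value and $(k/(k+1))^{2\nu}\ge 1/2$ estimates, one is left with
\[
C_1\,2^{\theta-5}\gamma^2\ -\ C_1\,k^{-1/\theta}\log(k+2)\ -\ L_f^2/8\ \ge\ 0,
\]
in which the $C_1$'s can be handled separately: the last two terms of \eqref{def-tK2} guarantee $k^{-1/\theta}\log(k+2)\le 2^{\theta-6}\gamma^2$ for $k\ge\tK_1$, collapsing the first two terms to $C_1\,2^{\theta-6}\gamma^2$, and then the third entry of \eqref{def-nuC2} gives $C_1\,2^{\theta-6}\gamma^2\ge L_f^2/8$. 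This condition on $k$ is independent of $C_1$, which is why the argument closes.
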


\begin{proof}
Let $h$ be defined in \eqref{def-h}. To prove this lemma, it is equivalent to show that $h(x_k)\leq C_1k^{-2\nu/\theta}$ for all $k\geq\tK_1$.  We now prove this by induction. Indeed, notice from Algorithm \ref{alg1} that $x_{\tK_1}\in X$. It then follows from  \eqref{def-nuC2}, \eqref{def-h}, and Assumption \ref{a1}(iv) that
\[
h(x_{\tK_1})\overset{\eqref{def-h}}{=}\frac{1}{2}\|c(x_{\tK_1})\|^2\leq\frac{1}{2}C_c^2\overset{\eqref{def-nuC2}}{\leq} C_1\tK_1^{-2\nu/\theta}.
\]
Hence,  the conclusion holds for $k=\tK_1$. Now, suppose for induction that $h(x_k)\leq C_1k^{-2\nu/\theta}$ holds for some $k\geq\tK_1$. Recall that $\rho_k$, $\eta_k$ and $\tK_1$ are given in \eqref{def-para2} and \eqref{def-tK2}.  In view of these, one can observe that 
\[
\rho_k\eta_k\overset{\eqref{def-para2} }=\frac{1}{4\log(k+2)}\leq\frac{1}{4\log(\tK_1+2)}\overset{\eqref{def-tK2}}\leq \frac{1}{8L}\leq\frac{\sqrt{5}-1}{2L},
\]
and hence Lemma \ref{l-rec} holds for such $k$.  Using Lemma \ref{l-rec} with the choice of $\rho_k$ and $\eta_k$ given in \eqref{def-para2}, we obtain that
\beq\label{rec2}
h(x_{k+1})+2^{\theta-4}\gamma^2[h(x_{k+1})]^{\theta}/\log(k+2)\leq h(x_k)+L_f^2k^{-2\nu}/(8\log(k+2)).
\eeq
Further, let
\beq\label{def-phi2}
\phi(t)=t+2^{\theta-4}\gamma^2t^\theta/\log(k+2).
\eeq
Notice from \eqref{def-para2} and \eqref{def-nuC2} that $\nu=\min\{\hat\theta/(\hat\theta+2),1/2\}$ for some $\hat\theta\geq1$ and $C_1\geq1$. Using these and \eqref{def-phi2},  we have
\begin{align}
&\ \phi(C_1(k+1)^{-2\nu/\theta})-C_1k^{-2\nu/\theta}-L_f^2k^{-2\nu}/(8\log(k+2))\nn\\
&\overset{\eqref{def-phi2}}{=}C_1^\theta 2^{\theta-4}\gamma^2(k+1)^{-2\nu}/\log(k+2)+C_1(k+1)^{-2\nu/\theta}-C_1k^{-2\nu/\theta}-L_f^2k^{-2\nu}/(8\log(k+2))\nn\\
&\ \geq C_1^\theta 2^{\theta-4}\gamma^2(k+1)^{-2\nu}/\log(k+2)-2\nu C_1 k^{-2\nu/\theta-1}/\theta-L_f^2k^{-2\nu}/(8\log(k+2))\nn\\
&\ = \frac{k^{-2\nu}}{\log(k+2)}\left(C_1^\theta 2^{\theta-4}\gamma^2\left(\frac{k}{k+1}\right)^{2\nu}- 2\nu C_1 k^{2\nu(1-\theta^{-1})-1}\log(k+2)/\theta-L_f^2/8\right)\nn\\
&\ \geq \frac{k^{-2\nu}}{\log(k+2)}\left(C_1^\theta 2^{\theta-5}\gamma^2-C_1 k^{-\frac{1}{\theta}}\log(k+2)-L_f^2/8\right)\nn\\
&\ \geq \frac{k^{-2\nu}}{\log(k+2)}\left(C_12^{\theta-5}\gamma^2-C_1 k^{-\frac{1}{\theta}}\log(k+2)-L_f^2/8\right),\label{l4-e1g}
\end{align}
where the first inequality follows from $(k+1)^{-2\nu/\theta}-k^{-2\nu/\theta}\geq-2\nu k^{-2\nu/\theta-1}/\theta$ thanks to the convexity of $t^{-2\nu/\theta}$, the second inequality is due to $\theta\geq1$, $\nu\leq1/2$ and  $k/(k+1)\geq1/2$, and the last inequality follows from $\theta\geq1$ and $C_1\geq1$. In addition, one can verify that $t^{-\frac{1}{2\theta}}\log(t+2)$ is decreasing on $[e^{2\theta},\infty)$. Using this, \eqref{def-tK2}, and $k\geq\tK_1\geq e^{2\theta}$, we obtain that
\[
k^{-\frac{1}{2\theta}}\log(k+2)\leq\log(e^{2\theta}+2)/e,\quad k^{-\frac{1}{2\theta}}\leq\tK_1^{-\frac{1}{2\theta}}\leq e\gamma^2/(2^{6-\theta}\log(e^{2\theta}+2)).
\]
Multiplying both sides of these inequalities yields $k^{-1/\theta}\log(k+2)\leq2^{\theta-6}\gamma^2$, which together with \eqref{def-nuC2}  implies that
\begin{align*}
C_12^{\theta-5}\gamma^2-C_1 k^{-\frac{1}{\theta}}\log(k+2)-L_f^2/8\geq C_12^{\theta-6}\gamma^2-L_f^2/8\overset{\eqref{def-nuC2}}{\geq}0.
\end{align*}
Using this,  \eqref{rec2}, \eqref{def-phi2},  \eqref{l4-e1g}, and the induction hypothesis that $h(x_k)\leq C_1k^{-2\nu/\theta}$, we obtain that
\[
\phi(C_1(k+1)^{-2\nu/\theta})\geq C_1k^{-2\nu/\theta}+L_f^2k^{-2\nu}/(8\log(k+2))\geq h(x_k)+L_f^2k^{-2\nu}/(8\log(k+2))\overset{\eqref{rec2}\eqref{def-phi2}}{\geq}\phi(h(x_{k+1})).
\]
It then follows from this inequality and the strict monotonicity of $\phi$ on $[0,\infty)$ that $h(x_{k+1})\leq C_1(k+1)^{-2\nu/\theta}$. Hence, the induction is completed and the conclusion of this lemma holds. 
\end{proof}

The following lemma provides a relationship between $\bbE\left[\|g_{k+1}-\nabla f(x_{k+1})\|^2\right]$ and 
$\bbE\left[\|g_k-\nabla f(x_k)\|^2\right]$.

\begin{lemma}\label{l-exp}
Suppose that Assumptions \ref{a1} and \ref{a-storm} hold, and $\{g_k\}$ and $\{x_k\}$ are generated by Algorithm \ref{alg1}. Then for all $k\geq1$, we have
\[
\bbE\left[\|g_{k+1}-\nabla f(x_{k+1})\|^2\right]\leq(1-\alpha_k)^2\bbE\left[\|g_k-\nabla f(x_k)\|^2\right]+6\bL_{\nabla f}^2\bbE\left[\|x_{k+1}-x_k\|^2\right]+3\sigma^2\alpha_k^2,
\]
where $\{\alpha_k\}$ is given in Algorithm \ref{alg1}, and $\sigma$ and $\bL_{\nabla f}$ are given in Assumptions \ref{a1} and \ref{a-storm}, respectively. 
\end{lemma}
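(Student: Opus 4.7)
The plan is to first strip away the projection using non-expansiveness, and then apply a conditional bias-variance decomposition to the untruncated recursive update.

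First, I would introduce the pre-projection iterate $Y_{k+1}:=\nabla\tf(x_{k+1},\xi_{k+1})+(1-\alpha_k)(g_k-\nabla\tf(x_k,\xi_{k+1}))$, so that $g_{k+1}=\Pi_{\cB(L_f)}(Y_{k+1})$. Since $\|\nabla f(x_{k+1})\|\leq L_f$ by \eqref{cf-bnd}, $\nabla f(x_{k+1})$ is already a fixed point of $\Pi_{\cB(L_f)}$; combined with the non-expansiveness of the projection, this yields $\|g_{k+1}-\nabla f(x_{k+1})\|\leq\|Y_{k+1}-\nabla f(x_{k+1})\|$. Hence the truncation is absorbed for free, and it suffices to bound $\bbE[\|Y_{k+1}-\nabla f(x_{k+1})\|^2]$ by the claimed right-hand side.

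Next, let $\mathcal{F}_k$ denote the $\sigma$-algebra generated by $\xi_1,\dots,\xi_k$. Because $x_k$, $x_{k+1}$, and $g_k$ are all $\mathcal{F}_k$-measurable while $\xi_{k+1}$ is independent of $\mathcal{F}_k$, Assumption \ref{a1}(iii) gives $\bbE[Y_{k+1}\mid\mathcal{F}_k]=\nabla f(x_{k+1})+(1-\alpha_k)(g_k-\nabla f(x_k))$. The standard orthogonality identity then splits the conditional second moment as
\[
\bbE[\|Y_{k+1}-\nabla f(x_{k+1})\|^2\mid\mathcal{F}_k] = (1-\alpha_k)^2\|g_k-\nabla f(x_k)\|^2 + \bbE[\|Y_{k+1}-\bbE[Y_{k+1}\mid\mathcal{F}_k]\|^2\mid\mathcal{F}_k],
\]
which already accounts for the first term of the desired bound, and reduces the problem to estimating the conditional variance of $Y_{k+1}$.

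The remaining step is to control this residual. I would use the three-term decomposition
\[
Y_{k+1}-\bbE[Y_{k+1}\mid\mathcal{F}_k] = \bigl(\nabla\tf(x_{k+1},\xi_{k+1})-\nabla\tf(x_k,\xi_{k+1})\bigr) + \bigl(\nabla f(x_k)-\nabla f(x_{k+1})\bigr) + \alpha_k\bigl(\nabla\tf(x_k,\xi_{k+1})-\nabla f(x_k)\bigr),
\]
and apply $\|a+b+c\|^2\leq3(\|a\|^2+\|b\|^2+\|c\|^2)$. Assumption \ref{a-storm} handles the first term (contributing $3\bL_{\nabla f}^2\|x_{k+1}-x_k\|^2$ in conditional expectation), the implied smoothness \eqref{bL-smooth} handles the second (another $3\bL_{\nabla f}^2\|x_{k+1}-x_k\|^2$, needing no expectation since this piece is $\mathcal{F}_k$-measurable), and Assumption \ref{a1}(iii) handles the third ($3\alpha_k^2\sigma^2$). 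Taking total expectation and combining with the first step yields the stated inequality. No step is a genuine obstacle; the only subtlety is the three-way (rather than two-way) split of the zero-mean residual, which separates the ``Lipschitz-in-$x$'' pieces from the ``variance-at-$x_k$'' noise and is what produces the constants $6$ and $3$ in place of the $4$ and $2$ a naive two-term split would give.
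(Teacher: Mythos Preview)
Your proposal is correct and follows essentially the same approach as the paper: both strip the projection via non-expansiveness (using $\nabla f(x_{k+1})\in\cB(L_f)$), isolate the $(1-\alpha_k)^2\|g_k-\nabla f(x_k)\|^2$ term by exploiting that the cross terms have zero conditional expectation (you phrase this as a bias--variance/orthogonality decomposition, the paper expands and cancels explicitly), and then apply the identical three-term split with the same bounds from Assumptions \ref{a1}(iii), \ref{a-storm}, and \eqref{bL-smooth}.
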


\begin{proof}
Notice from \eqref{cf-bnd} that $\nabla f(x_{k+1})\in\cB(L_f)$ and hence $\nabla f(x_{k+1})=\Pi_{\cB(L_f)}(\nabla f(x_{k+1}))$. By this, the expression of $g_{k+1}$, and the nonexpansiveness of the projection operator $\Pi_{\cB(L_f)}$, one has
\begin{align}
&\|g_{k+1}-\nabla f(x_{k+1})\|^2=\|\Pi_{\cB(L_f)}(\nabla \tf(x_{k+1},\xi_{k+1})+(1-\alpha_k)(g_k-\nabla \tf(x_k,\xi_{k+1})))-\Pi_{\cB(L_f)}(\nabla f(x_{k+1}))\|^2 \nn \\
&\leq\|\nabla \tf(x_{k+1},\xi_{k+1})+(1-\alpha_k)(g_k-\nabla \tf(x_k,\xi_{k+1}))-\nabla f(x_{k+1})\|^2 \nn \\
&=\|\nabla \tf(x_{k+1},\xi_{k+1})-\nabla f(x_{k+1})+(1-\alpha_k)(g_k-\nabla f(x_k)+\nabla f(x_k)-\nabla \tf(x_k,\xi_{k+1}))\|^2 \nn \\
&=\|\nabla \tf(x_{k+1},\xi_{k+1})-\nabla f(x_{k+1})+(1-\alpha_k)(\nabla f(x_k)-\nabla \tf(x_k,\xi_{k+1}))\|^2+(1-\alpha_k)^2\|g_k-\nabla f(x_k)\|^2\nn \\
&\quad+2(1-\alpha_k)\langle g_k-\nabla f(x_k),\nabla \tf(x_{k+1},\xi_{k+1})-\nabla f(x_{k+1})\rangle\nn \\
&\quad+2(1-\alpha_k)^2\langle g_k-\nabla f(x_k),\nabla f(x_k)-\nabla \tf(x_k,\xi_{k+1})\rangle. \label{g-bnd}
\end{align}
Let $\Xi_k=\{\xi_1,\ldots, \xi_k\}$ denote the collection of samples drawn up to iteration $k-1$ in Algorithm 
\ref{alg1}.  It then follows from Assumption \ref{a1}(iii) that 
\[
\bbE[\nabla \tf(x_{k+1},\xi_{k+1})-\nabla f(x_{k+1})|\Xi_k]=0, \quad \bbE[\nabla f(x_k)-\nabla \tf(x_k,\xi_{k+1})|\Xi_k]=0,
\]
which imply that
\begin{align*}
&\bbE[\langle g_k-\nabla f(x_k),\nabla \tf(x_{k+1},\xi_{k+1})-\nabla f(x_{k+1})\rangle|\Xi_k]=\langle g_k-\nabla f(x_k),\bbE[\nabla \tf(x_{k+1},\xi_{k+1})-\nabla f(x_{k+1})|\Xi_k]\rangle=0, \\
&\bbE[\langle g_k-\nabla f(x_k),\nabla f(x_k)-\nabla \tf(x_k,\xi_{k+1})\rangle|\Xi_k]=\langle g_k-\nabla f(x_k),\bbE[\nabla f(x_k)-\nabla \tf(x_k,\xi_{k+1})|\Xi_k]\rangle=0.
\end{align*}
Using these and taking a conditional expectation on both sides of \eqref{g-bnd}, we have
\begin{align}
\bbE[\|g_{k+1}-\nabla f(x_{k+1})\|^2|\Xi_k]\leq&\bbE[\|\nabla \tf(x_{k+1},\xi_{k+1})-\nabla f(x_{k+1})+(1-\alpha_k)(\nabla f(x_k)-\nabla \tf(x_k,\xi_{k+1}))\|^2|\Xi_k]\nn\\
&+(1-\alpha_k)^2\|g_k-\nabla f(x_k)\|^2.\label{l1-e1}
\end{align}
In addition, it follows from Assumption \ref{a1} that
\begin{align*}
&\bbE[\|\nabla \tf(x_{k+1},\xi_{k+1})-\nabla f(x_{k+1})+(1-\alpha_k)(\nabla f(x_k)-\nabla \tf(x_k,\xi_{k+1}))\|^2|\Xi_k]\\
&=\bbE[\|\nabla \tf(x_{k+1},\xi_{k+1})-\nabla \tf(x_k,\xi_{k+1})+\nabla f(x_k)-\nabla f(x_{k+1})-\alpha_k(\nabla f(x_k)-\nabla \tf(x_k,\xi_{k+1}))\|^2|\Xi_k]\\
&\leq3\bbE[\|\nabla \tf(x_{k+1},\xi_{k+1})-\nabla \tf(x_k,\xi_{k+1})\|^2|\Xi_k]+3\|\nabla f(x_{k+1})-\nabla f(x_k)\|^2\\
&\quad+3\alpha_k^2\bbE[\|\nabla f(x_k)-\nabla \tf(x_k,\xi_{k+1})\|^2|\Xi_k]\leq6\bL_{\nabla f}^2\|x_{k+1}-x_k\|^2+3\sigma^2\alpha_k^2,
\end{align*}
where the first inequality follows from the convexity of $\|\cdot\|^2$, and the last inequality is due to \eqref{bL-smooth} and Assumptions \ref{a1}(iii) and \ref{a-storm}. By this and \eqref{l1-e1}, one has
\[
\bbE[\|g_{k+1}-\nabla f(x_{k+1})\|^2|\Xi_k]\leq (1-\alpha_k)^2\|g_k-\nabla f(x_k)\|^2+6\bL_{\nabla f}^2\|x_{k+1}-x_k\|^2+3\sigma^2\alpha_k^2.
\]
The conclusion of this lemma follows from taking expectation on both sides of this inequality.
\end{proof}

The next lemma provides an upper bound on $\bbE[Q_{\rho_k}(x_k)+\zeta_k\|g_k-\nabla f(x_k)\|^2]$,  where $\zeta_k$ is given in \eqref{def-zeta}.

\begin{lemma}\label{l-pfunc}
Suppose that Assumptions \ref{a1} and \ref{a-storm} hold. Let $\{g_k\}$ and $\{x_k\}$ be generated by Algorithm \ref{alg1}, and  let 
\beq\label{def-zeta}
\zeta_k=k^{\nu}/(2\log 3),
\eeq
where $\nu$ is defined in \eqref{def-para2}. Then for all $k\geq1$, we have
\begin{align}
&\bbE\left[Q_{\rho_k}(x_k)+\zeta_k\|g_k-\nabla f(x_k)\|^2\right] \leq Q_{\rho_1}(x_1)+\zeta_1\|g_1-\nabla f(x_1)\|^2+\frac{1}{2}\sum_{i=1}^{k-1}(\rho_{i+1}-\rho_i)\bbE\left[\|c(x_{i+1})\|^2\right] \nn \\
&+\frac{1}{2}\sum_{i=1}^{k-1}\left(\bL_{\nabla f}+\rho_iL-\eta_i^{-1}+12\zeta_{i+1}\bL_{\nabla f}^2\right)\bbE\left[\|x_{i+1}-x_i\|^2\right]+3\sigma^2\sum_{i=1}^{k-1}\zeta_{i+1}\alpha_i^2, \label{penalty-bnd}
\end{align}
where $\{\alpha_k\}$, $\{\rho_k\}$ and $\{\eta_k\}$ are given in Algorithm \ref{alg1}, $Q_\rho$  and $L$ are respectively defined in \eqref{def-Q} and \eqref{def-L},  and $\sigma$ and $\bL_{\nabla f}$  are given in Assumptions \ref{a1} and \ref{a-storm}, respectively.
\end{lemma}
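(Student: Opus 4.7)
The plan is to construct a one-step inequality for the potential
$\Phi_i := Q_{\rho_i}(x_i) + \zeta_i\,\|g_i-\nabla f(x_i)\|^2$
whose expectation telescopes between iterations $i$ and $i+1$, and then sum over $i=1,\ldots,k-1$. The argument blends a deterministic descent estimate for $Q_{\rho_i}$ along the projected step with the variance-reduction recursion already established in Lemma \ref{l-exp}.

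For the descent estimate, I would first note that Assumption \ref{a-storm} (via \eqref{bL-smooth}) and the $L$-smoothness of $h=\|c\|^2/2$ imply $Q_{\rho_i}$ is $(\bL_{\nabla f}+\rho_i L)$-smooth on $\mcX$. Applying the descent lemma at $(x_i,x_{i+1})$, then using the first-order optimality of the projection $\langle G_i, x_{i+1}-x_i\rangle \le -\eta_i^{-1}\|x_{i+1}-x_i\|^2$ together with the identity $\nabla Q_{\rho_i}(x_i)-G_i = \nabla f(x_i)-g_i$ (the deterministic terms $\rho_i\nabla c(x_i)c(x_i)$ cancel), I would apply Young's inequality to the cross-term $\langle \nabla f(x_i)-g_i,\,x_{i+1}-x_i\rangle$ with weight $\eta_i^{-1}$. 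This produces exactly the coefficient $(\bL_{\nabla f}+\rho_i L-\eta_i^{-1})/2$ on $\|x_{i+1}-x_i\|^2$ that appears in the lemma, plus a term $\tfrac{\eta_i}{2}\|\nabla f(x_i)-g_i\|^2$. Combining with the elementary identity $Q_{\rho_{i+1}}(x_{i+1}) = Q_{\rho_i}(x_{i+1})+\tfrac{\rho_{i+1}-\rho_i}{2}\|c(x_{i+1})\|^2$, adding $\zeta_{i+1}$ times the one-step variance bound of Lemma \ref{l-exp}, and taking expectations yields a one-step inequality whose right-hand side already contains all the desired terms plus a single residual $\bigl(\tfrac{\eta_i}{2}+\zeta_{i+1}(1-\alpha_i)^2-\zeta_i\bigr)\bbE[\|g_i-\nabla f(x_i)\|^2]$.

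The crux is to verify that this residual coefficient is nonpositive for every $i\ge 1$ under the specific choices $\zeta_i=i^\nu/(2\log 3)$, $\alpha_i=i^{-2\nu}$, $\eta_i=i^{-\nu}/(4\log(i+2))$ with $\nu\in(0,1/2]$ from \eqref{def-para2}. Using $(1-\alpha_i)^2\le 1-\alpha_i$, the mean-value bound $(i+1)^\nu-i^\nu\le \nu\,i^{\nu-1}$ (valid since $\nu\le 1$), and $(i+1)^\nu\ge i^\nu$, the requirement reduces to $(1-\nu\,i^{2\nu-1})/(2\log 3)\ge 1/(8\log(i+2))$. This follows from $\nu\,i^{2\nu-1}\le\nu\le 1/2$ (as $2\nu-1\le 0$ forces $i^{2\nu-1}\le 1$) and $\log(i+2)\ge\log 3$ for $i\ge 1$; the edge case $i=1$ is immediate since $\alpha_1=1$ kills the $\zeta_2$ contribution entirely. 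Once the residual is shown to be nonpositive, telescoping the one-step inequality from $i=1$ to $k-1$ delivers exactly the bound stated in the lemma, with the $i=1$ value $Q_{\rho_1}(x_1)+\zeta_1\|g_1-\nabla f(x_1)\|^2$ surviving as the boundary term. The main obstacle is bookkeeping rather than conceptual: Young's weight must be tuned to $\eta_i^{-1}$ so that the $-\eta_i^{-1}$ coefficient on $\|x_{i+1}-x_i\|^2$ appears cleanly in the final statement, and the multiplier $\zeta_{i+1}$ on Lemma \ref{l-exp} must align with the Young residual so that the $\|g_i-\nabla f(x_i)\|^2$ terms telescope without leaving any positive residue.
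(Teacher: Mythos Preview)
Your proposal is correct and mirrors the paper's proof essentially step for step: the same descent lemma on $Q_{\rho_i}$, the same projection optimality condition, the same Young inequality with weight $\eta_i^{-1}$, the same use of Lemma~\ref{l-exp} scaled by $\zeta_{i+1}$, and the same verification that $\zeta_{i+1}(1-\alpha_i)^2+\tfrac{\eta_i}{2}\le\zeta_i$ (the paper actually proves the slightly stronger $\zeta_{k+1}(1-\alpha_k)^2+\eta_k\le\zeta_k$, retaining an extra $-\tfrac{\eta_k}{2}\bbE[\|g_k-\nabla f(x_k)\|^2]$ term in the one-step inequality \eqref{l5-e2} for later use in the proof of Theorem~\ref{thm:storm}, but for the lemma as stated your weaker version suffices).
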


\begin{proof}
Observe from Assumptions \ref{a1} and \ref{a-storm} and the definition of $Q_\rho$ in \eqref{def-Q} that $Q_{\rho_k}$ is $(\bL_{\nabla f}+\rho_kL)$-smooth on $\mcX$, where $L$ is defined in \eqref{def-L}. Notice from Algorithm \ref{alg1} that $x_k\in\mcX$ and $x_{k+1}= \Pi_X(x_k-\eta_k G_k)$, which imply that
\beq \label{subprob-opt2}
\langle x_{k+1}-x_k+\eta_k G_k,x_k-x_{k+1}\rangle\geq0\quad\Rightarrow\quad \langle G_k,x_{k+1}-x_k\rangle\leq-\eta_k^{-1}\|x_{k+1}-x_k\|^2.
\eeq
Also, notice from Algorithm \ref{alg1} and \eqref{def-Q} that 
\[
G_k=g_k+\rho_k\nabla c(x_k)c(x_k), \quad \nabla Q_{\rho_k}(x_k)=\nabla f(x_k)+\rho_k\nabla c(x_k)c(x_k),
\] 
and hence $\nabla Q_{\rho_k}(x_k)-G_k=g_k-\nabla f(x_k)$.  In addition, by Young's inequality,  one has 
\beq \label{young-ineq}
\langle\nabla Q_{\rho_k}(x_k)-G_k,x_{k+1}-x_k\rangle \leq \frac{1}{2\eta_k}\|x_{k+1}-x_k\|^2+\frac{\eta_k}{2}\|\nabla Q_{\rho_k}(x_k)-G_k\|^2
\eeq
Using the last two relations,  \eqref{subprob-opt2}, and the $(\bL_{\nabla f}+\rho_kL)$-smoothness of $Q_{\rho_k}$, we obtain that
\begin{align*}
Q_{\rho_k}(x_{k+1})&\ \leq  Q_{\rho_k}(x_k)+\langle\nabla Q_{\rho_k}(x_k),x_{k+1}-x_k\rangle+\frac{1}{2}(\bL_{\nabla f}+\rho_kL)\|x_{k+1}-x_k\|^2\\
&\ = Q_{\rho_k}(x_k)+\langle G_k,x_{k+1}-x_k\rangle+\langle\nabla Q_{\rho_k}(x_k)-G_k,x_{k+1}-x_k\rangle+\frac{1}{2}(\bL_{\nabla f}+\rho_kL)\|x_{k+1}-x_k\|^2\\
& \overset{\eqref{young-ineq}}{\leq} Q_{\rho_k}(x_k)+\langle G_k,x_{k+1}-x_k\rangle+\frac{1}{2}\left(\bL_{\nabla f}+\rho_kL+\eta_k^{-1}\right)\|x_{k+1}-x_k\|^2+\frac{\eta_k}{2}\|\nabla Q_{\rho_k}(x_k)-G_k\|^2\\
&\ \leq Q_{\rho_k}(x_k)+\frac{1}{2}\left(\bL_{\nabla f}+\rho_kL-\eta_k^{-1}\right)\|x_{k+1}-x_k\|^2+\frac{\eta_k}{2}\|g_k-\nabla f(x_k)\|^2,
\end{align*}
where the first inequality is due to the $(\bL_{\nabla f}+\rho_kL)$-smoothness of $Q_{\rho_k}$, and the last inequality follows from \eqref{subprob-opt2} and the relation $\nabla Q_{\rho_k}(x_k)-G_k=g_k-\nabla f(x_k)$. By this and \eqref{def-Q}, we further have
\begin{align}
&Q_{\rho_{k+1}}(x_{k+1})\leq Q_{\rho_k}(x_k)+\frac{1}{2}\left(\bL_{\nabla f}+\rho_kL-\eta_k^{-1}\right)\|x_{k+1}-x_k\|^2+\frac{\eta_k}{2}\|g_k-\nabla f(x_k)\|^2\nn\\
&\qquad\qquad\qquad\,\,+Q_{\rho_{k+1}}(x_{k+1})-Q_{\rho_k}(x_{k+1})\nn\\
&\overset{\eqref{def-Q}}{=} Q_{\rho_k}(x_k)+\frac{1}{2}\left(\bL_{\nabla f}+\rho_kL-\eta_k^{-1}\right)\|x_{k+1}-x_k\|^2+\frac{\eta_k}{2}\|g_k-\nabla f(x_k)\|^2+\frac{1}{2}(\rho_{k+1}-\rho_k)\|c(x_{k+1})\|^2.\label{l5-e1}
\end{align}

By the definitions of $\eta_k$, $\alpha_k$, $\nu$ and $\zeta_k$ in \eqref{def-para2}, \eqref{def-nuC2} and \eqref{def-zeta}, one has
\begin{align*}
&\ \zeta_k-\zeta_{k+1}(1-\alpha_k)^2-\eta_k=\zeta_{k+1}\alpha_k(2-\alpha_k)-\zeta_{k+1}+\zeta_k-\eta_k\geq \zeta_{k+1}\alpha_k-\zeta_{k+1}+\zeta_k-\eta_k\\
&\overset{\eqref{def-para2}\eqref{def-zeta}}\geq(k+1)^\nu k^{-2\nu}/(2\log 3)-(k+1)^\nu/(2\log 3)+k^\nu/(2\log 3)-k^{-\nu}/(4\log(k+2))\\
&\ \ \ \geq k^{-\nu}/(2\log 3)-(k+1)^\nu/(2\log 3)+k^\nu/(2\log 3)-k^{-\nu}/(4\log(k+2))\\
&\ \ \ \geq k^{-\nu}/(2\log 3)-\nu k^{\nu-1}/(2\log 3)-k^{-\nu}/(4\log(k+2))\\
&\ \ \ = \frac{k^{-\nu}}{4\log 3}\left(2-2\nu k^{2\nu-1}-\log 3/\log(k+2)\right) \geq\frac{k^{-\nu}}{4\log 3}\left(2-1-\log 3/\log(k+2)\right)\geq0,
\end{align*}
where the first inequality is due to  $0<\alpha_k\leq1$, the third inequality follows from $(k+1)^\nu>k^\nu$, the fourth inequality is due to $(k+1)^\nu-k^\nu\leq \nu k^{\nu-1}$ thanks to the concavity of $t^\nu$, the fifth inequality follows from $\nu\leq1/2$, and the last inequality is due to $k\geq1$. Hence, we obtain
\[
\zeta_{k+1}(1-\alpha_k)^2+\eta_k\leq\zeta_k\qquad \forall k\geq1.
\]
Using this, taking expectation on both sides of \eqref{l5-e1}, and summing the resulting inequality with the inequality in Lemma \ref{l-exp}, we obtain that
\begin{align}
&\bbE\left[Q_{\rho_{k+1}}(x_{k+1})+\zeta_{k+1}\|g_{k+1}-\nabla f(x_{k+1})\|^2\right]\nn\\
&\leq \bbE\left[Q_{\rho_k}(x_k)+\left(\zeta_{k+1}(1-\alpha_k)^2+\eta_k\right)\|g_k-\nabla f(x_k)\|^2\right]+\frac{1}{2}\left(\bL_{\nabla f}+\rho_kL-\eta_k^{-1}+12\zeta_{k+1}\bL_{\nabla f}^2\right) \nn \\ 
&\quad\times \bbE\left[\|x_{k+1}-x_k\|^2\right] -\frac{\eta_k}{2}\bbE\left[\|g_k-\nabla f(x_k)\|^2\right]+\frac{1}{2}(\rho_{k+1}-\rho_k)\bbE\left[\|c(x_{k+1})\|^2\right]+3\sigma^2\zeta_{k+1}\alpha_k^2\nn\\
&\leq \bbE\left[Q_{\rho_k}(x_k)+\zeta_k\|g_k-\nabla f(x_k)\|^2\right]+\frac{1}{2}\left(\bL_{\nabla f}+\rho_kL-\eta_k^{-1}+12\zeta_{k+1}\bL_{\nabla f}^2\right)\bbE\left[\|x_{k+1}-x_k\|^2\right]\nn\\
&\quad-\frac{\eta_k}{2}\bbE\left[\|g_k-\nabla f(x_k)\|^2\right]+\frac{1}{2}(\rho_{k+1}-\rho_k)\bbE\left[\|c(x_{k+1})\|^2\right]+3\sigma^2\zeta_{k+1}\alpha_k^2.\label{l5-e2}
\end{align}
The conclusion of this lemma follows by replacing $k$ with $i$ in the above inequalities and summing them up for all $1\leq i\leq k-1$.
\end{proof}

The following lemma provides an upper bound on $\dist^2\left(0,\nabla Q_{\rho_k}(x_{k+1})+\mcN_\mcX(x_{k+1})\right)$.

\begin{lemma}\label{l-dist}
Suppose that Assumptions \ref{a1} and \ref{a-storm} hold, and $\{g_k\}$ and  $\{x_k\}$ are generated by Algorithm \ref{alg1}. Then for all $k\geq1$, we have
\beq \label{residual-bnd}
\dist^2\left(0,\nabla Q_{\rho_k}(x_{k+1})+\mcN_\mcX(x_{k+1})\right)\leq3\left(\eta_k^{-2}+(\bL_{\nabla f}+\rho_kL)^2\right)\|x_{k+1}-x_k\|^2+3\|g_k-\nabla f(x_k)\|^2,
\eeq
where  $\{\rho_k\}$ and $\{\eta_k\}$ are given in Algorithm \ref{alg1}, $\bL_{\nabla f}$ is given in Assumption \ref{a-storm},  and $L$ and $Q_\rho$ are defined in \eqref{def-L} and \eqref{def-Q}, respectively.
\end{lemma}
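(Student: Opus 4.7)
The plan is to exploit the optimality condition of the projection step defining $x_{k+1}$ in order to exhibit an explicit element of $\nabla Q_{\rho_k}(x_{k+1}) + \mcN_\mcX(x_{k+1})$, and then bound its norm by a sum of three easily-controlled pieces.

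First, from $x_{k+1} = \Pi_\mcX(x_k - \eta_k G_k)$ and standard characterization of the projection onto a closed convex set, we have
\[
0 \in x_{k+1} - x_k + \eta_k G_k + \mcN_\mcX(x_{k+1}),
\]
so $\eta_k^{-1}(x_k - x_{k+1}) - G_k \in \mcN_\mcX(x_{k+1})$. Adding $\nabla Q_{\rho_k}(x_{k+1})$ on both sides, I obtain
\[
\nabla Q_{\rho_k}(x_{k+1}) + \eta_k^{-1}(x_k - x_{k+1}) - G_k \; \in \; \nabla Q_{\rho_k}(x_{k+1}) + \mcN_\mcX(x_{k+1}).
\]
Hence the distance on the left-hand side of \eqref{residual-bnd} is bounded above by the norm of this vector.

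Next, I would decompose this vector into three contributions:
\[
\bigl[\nabla Q_{\rho_k}(x_{k+1}) - \nabla Q_{\rho_k}(x_k)\bigr] + \eta_k^{-1}(x_k - x_{k+1}) + \bigl[\nabla Q_{\rho_k}(x_k) - G_k\bigr].
\]
By the standard inequality $\|a+b+c\|^2 \leq 3(\|a\|^2+\|b\|^2+\|c\|^2)$, it suffices to bound each term. For the first term, Assumption \ref{a-storm} (via \eqref{bL-smooth}) together with Assumption \ref{a1}(iv) yields that $Q_{\rho_k}$ is $(\bL_{\nabla f}+\rho_k L)$-smooth on $\mcX$, giving the bound $(\bL_{\nabla f}+\rho_k L)\|x_{k+1}-x_k\|$. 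The second term is trivially $\eta_k^{-1}\|x_{k+1}-x_k\|$. For the third term, I use the identity $G_k = g_k + \rho_k\nabla c(x_k)c(x_k)$ from Algorithm \ref{alg1} together with $\nabla Q_{\rho_k}(x_k) = \nabla f(x_k) + \rho_k\nabla c(x_k)c(x_k)$ from \eqref{def-Q}, which yields
\[
\nabla Q_{\rho_k}(x_k) - G_k = \nabla f(x_k) - g_k.
\]

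Putting these three bounds together and applying the $\|\cdot\|^2$ inequality above gives exactly \eqref{residual-bnd}. The proof is essentially one display of algebraic manipulation; there is no real obstacle, the only subtlety is recognizing that the stochastic error in $G_k$ reduces to $g_k - \nabla f(x_k)$ because the penalty-gradient component $\rho_k\nabla c(x_k)c(x_k)$ is computed exactly in Algorithm \ref{alg1}.
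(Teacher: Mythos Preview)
Your proposal is correct and follows essentially the same approach as the paper's own proof: both use the projection optimality condition to produce an explicit element of $\nabla Q_{\rho_k}(x_{k+1})+\mcN_\mcX(x_{k+1})$, decompose it into the same three pieces, and bound them via the $(\bL_{\nabla f}+\rho_k L)$-smoothness of $Q_{\rho_k}$ and the identity $\nabla Q_{\rho_k}(x_k)-G_k=\nabla f(x_k)-g_k$.
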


\begin{proof}
By the expression of $x_{k+1}$ in Algorithm \ref{alg1}, one has
\beq \label{subprob_opt}
0\in x_{k+1}-x_k+\eta_k G_k+\mcN_\mcX(x_{k+1})\quad\Rightarrow\quad\eta_k^{-1}(x_k-x_{k+1})-G_k\in\mcN_\mcX(x_{k+1}).
\eeq
Notice from the definition of $Q_\rho$ in \eqref{def-Q} that $\nabla Q_{\rho_k}(x)=\nabla f(x)+\rho_k\nabla c(x)c(x)$, which together with \eqref{def-L}, \eqref{bL-smooth} and Assumption \ref{a1}(iv) implies that $Q_{\rho_k}$ is $(\bL_{\nabla f}+\rho_kL)$-smooth on $\mcX$. Using \eqref{subprob_opt}, the expression of $\nabla Q_{\rho_k}$, and $G_k=g_k+\rho_k\nabla c(x_k)c(x_k)$ (see Algorithm \ref{alg1}), we have
\[
\eta_k^{-1}(x_k-x_{k+1})+\nabla f(x_k)-g_k-\nabla Q_{\rho_k}(x_k) = \eta_k^{-1}(x_k-x_{k+1})-g_k-\rho_k\nabla c(x_k)c(x_k) \in \mcN_\mcX(x_{k+1}).
\]
By this and the $(\bL_{\nabla f}+\rho_kL)$-smoothness of $Q_{\rho_k}$, one has
\begin{align*}
&\dist^2\left(0,\nabla Q_{\rho_k}(x_{k+1})+\mcN_\mcX(x_{k+1})\right) \leq \|\nabla Q_{\rho_k}(x_{k+1})
+(\eta_k^{-1}(x_k-x_{k+1})+\nabla f(x_k)-g_k-\nabla Q_{\rho_k}(x_k))\|^2\\
&\leq 3\left(\|\nabla Q_{\rho_k}(x_{k+1})-\nabla Q_{\rho_k}(x_k)\|^2+\eta_k^{-2}\|x_{k+1}-x_k\|^2+\| g_k-\nabla f(x_k)\|^2\right)\\
&\leq 3\left(\eta_k^{-2}+(\bL_{\nabla f}+\rho_kL)^2\right)\|x_{k+1}-x_k\|^2+3\|g_k-\nabla f(x_k)\|^2,
\end{align*}
where the second inequality follows from the convexity of $\|\cdot\|^2$, and the last inequality is due to the $(\bL_{\nabla f}+\rho_kL)$-smoothness of $Q_{\rho_k}$. Hence, the conclusion of this lemma holds.
\end{proof}

We are now ready to prove the main result in Section \ref{sec:storm}, which is particularly Theorem \ref{thm:storm}.

\begin{proof}[\textbf{Proof of Theorem \ref{thm:storm}}]
Using \eqref{def-para2}, \eqref{def-tK2} and \eqref{def-zeta}, we have that for all $i\geq\tK_1$,
\begin{align}
\bL_{\nabla f}+\rho_iL &\overset{\eqref{def-para2}}=\bL_{\nabla f}i^{-\nu}\eta_i^{-1}/(4\log(i+2))+L\eta_i^{-1}/(4\log(i+2))\nn\\
&\ \leq\bL_{\nabla f}\tK_1^{-\nu}\eta_i^{-1}/4+L\eta_i^{-1}/(4\log(\tK_1+2))\overset{\eqref{def-tK2}}\leq\eta_i^{-1}/4,\label{t1-e2}\\
12\zeta_{i+1}\bL_{\nabla f}^2& \overset{\eqref{def-para2}\eqref{def-zeta}}=3\bL_{\nabla f}^2\big((i+1)/i\big)^\nu\eta_i^{-1}/\big(2\log(i+2)\log 3\big)\nn\\
&\ \ \ \leq3\bL_{\nabla f}^22^\nu\eta_i^{-1}/(2\log(\tK_1+2))\overset{\eqref{def-tK2}}\leq\eta_i^{-1}/4,\nn
\end{align}
which imply that 
\beq\label{t1-ineq2}
\bL_{\nabla f}+\rho_iL+12\zeta_{i+1}\bL_{\nabla f}^2\leq\eta_i^{-1}/2\qquad\forall i\geq\tK_1.
\eeq
It then follows from the proof of Lemma \ref{l-pfunc} that \eqref{l5-e2} holds. Using \eqref{t1-ineq2} and rearranging the terms of \eqref{l5-e2} with $k$ replaced by $i$, we obtain that for all $i\geq\tK_1$, 
\begin{align}
&\frac{1}{4\eta_i}\bbE\left[\|x_{i+1}-x_i\|^2\right]+\frac{\eta_i}{2}\bbE\left[\|g_i-\nabla f(x_i)\|^2\right]\nn\\
&\overset{\eqref{l5-e2}}\leq \bbE\left[Q_{\rho_i}(x_i)+\zeta_i\|g_i-\nabla f(x_i)\|^2\right]-\bbE\left[Q_{\rho_{i+1}}(x_{i+1})+\zeta_{i+1}\|g_{i+1}-\nabla f(x_{i+1})\|^2\right]\nn\\
&\quad\,\,\,+\frac{1}{2}\left(\bL_{\nabla f}+\rho_iL+12\zeta_{i+1}\bL_{\nabla f}^2-\eta_i^{-1}/2\right)\bbE\left[\|x_{i+1}-x_i\|^2\right]+\frac{1}{2}(\rho_{i+1}-\rho_i)\bbE\left[\|c(x_{i+1})\|^2\right]+3\sigma^2\zeta_{i+1}\alpha_i^2\nn\\
&\overset{\eqref{t1-ineq2}}{\leq}\bbE\left[Q_{\rho_i}(x_i)+\zeta_i\|g_i-\nabla f(x_i)\|^2\right]-\bbE\left[Q_{\rho_{i+1}}(x_{i+1})+\zeta_{i+1}\|g_{i+1}-\nabla f(x_{i+1})\|^2\right]\nn\\
&\quad\,\,\,+\frac{1}{2}(\rho_{i+1}-\rho_i)\bbE\left[\|c(x_{i+1})\|^2\right]+3\sigma^2\zeta_{i+1}\alpha_i^2.\label{t1-p2}
\end{align}
Recall that $\iota_k$ is the random variable uniformly generated in $\left\{\lceil k/2\rceil+1,\dots,k\right\}$. In addition, observe from \eqref{def-para2} that $\eta_i ^{-1}<\eta_{k-1}^{-1}$ for all $\lceil k/2\rceil \leq i \leq k-1$. By these, \eqref{def-Q}, \eqref{def-para2}, \eqref{penalty-bnd}, \eqref{residual-bnd}, \eqref{t1-e2} and \eqref{t1-p2}, one has that for all $k\geq2\tK_1$,
\begin{align}
&\ \bbE\left[\dist^2\left(0,\nabla f(x_{\iota_k})+\rho_{\iota_k-1}\nabla c(x_{\iota_k})c(x_{\iota_k})+\mcN_\mcX(x_{\iota_k})\right)\right]=\bbE\left[\dist^2\left(0,\nabla Q_{\rho_{\iota_k-1}}(x_{\iota_k})+\mcN_\mcX(x_{\iota_k})\right)\right]\nn\\
&\ =\frac{1}{k-\lceil k/2\rceil}\sum_{i=\lceil k/2\rceil}^{k-1}\bbE\left[\dist^2\left(0,\nabla Q_{\rho_i}(x_{i+1})+\mcN_\mcX(x_{i+1})\right)\right]\nn\\
& \overset{\eqref{residual-bnd}}\leq  \frac{3}{k-\lceil k/2\rceil}\sum_{i=\lceil k/2\rceil}^{k-1}\left(\left(\eta_i^{-2}+(\bL_{\nabla f}+\rho_iL)^2\right)\bbE\left[\|x_{i+1}-x_i\|^2\right]+\bbE\left[\|g_i-\nabla f(x_i)\|^2\right]\right)\nn\\
& \overset{\eqref{t1-e2}}{\leq} \frac{3}{k-\lceil k/2\rceil}\sum_{i=\lceil k/2\rceil}^{k-1}\left((\eta_i^{-2}+\eta_i^{-2}/16)\bbE\left[\|x_{i+1}-x_i\|^2\right]+\bbE\left[\|g_i-\nabla f(x_i)\|^2\right]\right)\nn\\
&\ \leq \frac{51}{8(k-1)}\sum_{i=\lceil k/2\rceil}^{k-1}\left(\eta_i^{-2}\bbE\left[\|x_{i+1}-x_i\|^2\right]+2\bbE\left[\|g_i-\nabla f(x_i)\|^2\right]\right)\nn\\
&\ \leq \frac{51}{2(k-1)\eta_{k-1}}\sum_{i=\lceil k/2\rceil}^{k-1}\left(\frac{1}{4\eta_i}\bbE\left[\|x_{i+1}-x_i\|^2\right]+\frac{\eta_i}{2}\bbE\left[\|g_i-\nabla f(x_i)\|^2\right]\right)\nn\\
& \overset{\eqref{t1-p2}}{\leq}\frac{51}{2(k-1)\eta_{k-1}}\sum_{i=\lceil k/2\rceil}^{k-1}\Big(\bbE\left[Q_{\rho_i}(x_i)+\zeta_i\|g_i-\nabla f(x_i)\|^2\right]-\bbE\left[Q_{\rho_{i+1}}(x_{i+1})+\zeta_{i+1}\|g_{i+1}-\nabla f(x_{i+1})\|^2\right]\nn\\
&\ \quad\,\,\,+\frac{1}{2}(\rho_{i+1}-\rho_i)\bbE\left[\|c(x_{i+1})\|^2\right]+3\sigma^2\zeta_{i+1}\alpha_i^2\Big)\nn\\
&\ = \frac{51}{2(k-1)\eta_{k-1}}\Bigg(\bbE\left[Q_{\rho_{\lceil k/2\rceil}}(x_{\lceil k/2\rceil})+\zeta_{\lceil k/2\rceil}\|g_{\lceil k/2\rceil}-\nabla f(x_{\lceil k/2\rceil})\|^2\right]-\bbE\left[Q_{\rho_k}(x_k)+\zeta_k\|g_k-\nabla f(x_k)\|^2\right]\nn\\
&\ \quad\,\,\,+\frac{1}{2}\sum_{i=\lceil k/2\rceil}^{k-1}(\rho_{i+1}-\rho_i)\bbE\left[\|c(x_{i+1})\|^2\right]+3\sigma^2\sum_{i=\lceil k/2\rceil}^{k-1}\zeta_{i+1}\alpha_i^2\Bigg)\nn\\
&\ \leq \frac{51}{2(k-1)\eta_{k-1}}\Bigg(Q_1(x_1)-Q_1^*+\zeta_1\|g_1-\nabla f(x_1)\|^2+\frac{1}{2}\sum_{i=1}^{k-1}(\rho_{i+1}-\rho_i)\bbE\left[\|c(x_{i+1})\|^2\right]+3\sigma^2\sum_{i=1}^{k-1}\zeta_{i+1}\alpha_i^2\nn\\
&\ \quad\,\,\,+\frac{1}{2}\sum_{i=1}^{\lceil k/2\rceil-1}\left(\bL_{\nabla f}+\rho_iL-\eta_i^{-1}+12\zeta_{i+1}\bL_{\nabla f}^2\right)\bbE\left[\|x_{i+1}-x_i\|^2\right]\Bigg),\label{t1-sum2}
\end{align}
where the first equality is due to \eqref{def-Q}, the first inequality follows from taking expectation on both sides of \eqref{residual-bnd}, the third inequality is due to the fact that $\lceil k/2\rceil\leq(k+1)/2$, the fourth inequality follows from the relation $\eta_i ^{-1}<\eta_{k-1}^{-1}$ for all $\lceil k/2\rceil \leq i \leq k-1$, and the last inequality follows from \eqref{penalty-bnd} with $k$ replaced by $\lceil k/2\rceil$, $\rho_1=1$, and the fact that $Q_{\rho_k}(x_k)\geq Q_1 (x_k) \geq Q_1^*$.

We next bound each term in the summation \eqref{t1-sum2}.  Indeed, it follows from \eqref{cf-bnd}, Assumption \ref{a1}(iv), the nonexpansiveness of $\Pi_X$, and the expressions of $x_{k+1}$, $g_k$ and $G_k$ in Algorithm \ref{alg1} that
\begin{align}
\|x_{k+1}-x_k\|^2&=\|\Pi_\mcX(x_k-\eta_kG_k)-\Pi_X(x_k)\|^2\leq\eta_k^2\|G_k\|^2=\eta_k^2\|g_k+\rho_k\nabla c(x_k)c(x_k)\|^2\nn\\
&\leq2\eta_k^2\left(\|g_k\|^2+\rho_k^2\|\nabla c(x_k)c(x_k)\|^2\right)\leq2\eta_k^2\left(L_f^2+C_c^2L_c^2\rho_k^2\right).\label{t1-x1}
\end{align}
Recall that $\nu=\min\{\hat\theta/(\hat\theta+2),1/2\}$ for some $\hat\theta\geq1$ and $\|c(x_i)\| \leq C_c$ for all $i$. By this, Lemma \ref{l-cnst2}, \eqref{def-para2}, \eqref{def-zeta},  \eqref{t1-x1},  one has that for all $k\geq2\tK_1$, 
\begin{align}
&\sum_{i=1}^{\tK_1-1}(\rho_{i+1}-\rho_i)\bbE\left[\|c(x_{i+1})\|^2\right]\leq C_c^2\sum_{i=1}^{\tK_1-1}\big((i+1)^\nu-i^\nu\big)=C_c^2\big(\tK_1^\nu-1\big),\label{sum2-c1}\\
&\sum_{i=\tK_1}^{k-1}(\rho_{i+1}-\rho_i)\bbE\left[\|c(x_{i+1})\|^2\right]\leq2C_1\sum_{i=\tK_1}^{k-1}((i+1)^\nu-i^\nu)(i+1)^{-\frac{2\nu}{\theta}}\leq 2\nu C_1\sum_{i=\tK_1}^{k-1}i^{\nu-1}(i+1)^{-\frac{2\nu}{\theta}} \label{sum2-c2} \\
&\leq C_1\sum_{i=\tK_1}^{k-1}i^{-1}(i+1)^{\nu-\frac{2\nu}{\theta}}\leq C_1\max\big\{1,k^{\nu-\frac{2\nu}{\theta}}\big\}\sum_{i=\tK_1}^{k-1}i^{-1}\leq C_1\max\big\{1,k^{\nu-\frac{2\nu}{\theta}}\big\}(1+\log k), \label{sum2-c3} \\
&\sum_{i=1}^{k-1}\zeta_{i+1}\alpha_i^2\overset{\eqref{def-zeta}}=\frac{1}{2\log3}\sum_{i=1}^{k-1}(i+1)^{\nu}i^{-4\nu}= \frac{1}{2\log3}\sum_{i=1}^{k-1}i^{-3\nu}((i+1)/i)^{\nu} \nn \\
& \leq\frac{\sqrt{2}}{2\log3}\sum_{i=1}^{k-1}i^{-3\nu} \leq\frac{\sqrt{2}}{2\log3}\sum_{i=1}^{k-1}i^{-1}\leq\frac{\sqrt{2}(1+\log k)}{2\log3},\label{sum2-c4a}\\
&\ \sum_{i=1}^{\lceil k/2\rceil-1}\left(\bL_{\nabla f}+\rho_iL-\eta_i^{-1}+12\zeta_{i+1}\bL_{\nabla f}^2\right)\bbE\left[\|x_{i+1}-x_i\|^2\right]\nn\\
&\ =\sum_{i=1}^{\tK_1-1}\left(\bL_{\nabla f}+\rho_iL-\eta_i^{-1}+12\zeta_{i+1}\bL_{\nabla f}^2\right)\bbE\left[\|x_{i+1}-x_i\|^2\right]+\sum_{i=\tK_1}^{\lceil k/2\rceil-1}\Big((\bL_{\nabla f}+\rho_iL-\eta_i^{-1}+12\zeta_{i+1}\bL_{\nabla f}^2)\nn \\ 
&\ \quad\,\,\,\times \bbE\left[\|x_{i+1}-x_i\|^2\right]\Big) \nn \\
&\overset{\eqref{t1-ineq2}}{\leq}\sum_{i=1}^{\tK_1-1}\left(\bL_{\nabla f}+\rho_iL-\eta_i^{-1}+12\zeta_{i+1}\bL_{\nabla f}^2\right)\bbE\left[\|x_{i+1}-x_i\|^2\right] \nn\\
&\overset{\eqref{t1-x1}}{\leq}2\sum_{i=1}^{\tK_1-1}\left(\bL_{\nabla f}+\rho_iL+12\zeta_{i+1}\bL_{\nabla f}^2\right)\eta_i^2\left(L_f^2+C_c^2L_c^2\rho_i^2\right)\nn\\
&\overset{\eqref{def-para2}}{\leq}\frac{1}{8}(1+\log\tK_1)\big(\bL_{\nabla f}+\tK_1^{\frac{1}{2}}L+6\tK_1^{\frac{1}{2}}\bL_{\nabla f}^2\big)(L_f^2+C_c^2L_c^2\tK_1),\label{sum2-c4}
\end{align}
where the inequality in \eqref{sum2-c1} follows from \eqref{def-para2}, $\|c(x_i)\| \leq C_c$ for all $i$, the inequalities in \eqref{sum2-c2} are due to \eqref{def-para2}, Lemma \ref{l-cnst2},  and $(i+1)^\nu-i^\nu\leq \nu i^{\nu-1}$ for all $i\geq1$ thanks to the concavity of $t^\nu$, the inequalities in \eqref{sum2-c4a} are due to $((i+1)/i)^\nu\leq\sqrt{2}$ for all $i\geq1$ and $\nu\geq1/3$, and the last inequality in \eqref{sum2-c4} is due to the relations $\rho_i \leq \tK_1^{1/2}$ and $\zeta_i \leq \tK_1^{1/2}/2$ for $1\leq i\leq \tK_1-1$ and
$\sum_{i=1}^{\tK_1-1}\eta_i^2 \leq \sum_{i=1}^{\tK_1-1}i^{-1}/16\leq (1+\log\tK_1)/16$ thanks to $\nu\leq 1/2$ and the choice of $\rho_i$, $\eta_i$ and $\zeta_i$ in \eqref{def-para2} and \eqref{def-zeta}.

Using \eqref{def-para2}, \eqref{def-zeta}, \eqref{t1-sum2}, \eqref{sum2-c1}, \eqref{sum2-c3}, \eqref{sum2-c4a} and \eqref{sum2-c4}, we have
\begin{align*}
&\bbE\left[\dist^2\left(0,\nabla f(x_{\iota_k})+\rho_{\iota_k-1}\nabla c(x_{\iota_k})c(x_{\iota_k})+\mcN_\mcX(x_{\iota_k})\right)\right]\\
&\leq \frac{102\log(k+1)}{(k-1)^{1-\nu}}\Bigg(Q_1(x_1)-Q_1^*+\frac{\|g_1-\nabla f(x_1)\|^2}{2\log 3}+\frac{1}{2}C_1\max\{1,k^{\nu-\frac{2\nu}{\theta}}\}(1+\log k)\nn \\ 
&\quad+\frac{1}{2}C_c^2\big(\tK_1^{\nu}-1\big)
+\frac{3\sqrt{2}\sigma^2(1+\log k)}{2\log3}+\frac{1}{16}(1+\log\tK_1)\big(\bL_{\nabla f}+\tK_1^{\frac{1}{2}}L+6\tK_1^{\frac{1}{2}}\bL_{\nabla f}^2\big)(L_f^2+C_c^2L_c^2\tK_1)\Bigg).
\end{align*}
By this, \eqref{def-Q}, $\iota_k>\lceil k/2\rceil\geq\tK_1$ for all $k\geq2\tK_1$, and Lemma \ref{l-cnst2} with $k$ replaced by $\iota_k$, one can see that Theorem \ref{thm:storm} holds.
\end{proof}

\subsection{Proof of the main result in Section \ref{sec:mm}}\label{sec:proof-mm}

In this subsection, we first establish a convergence rate for the  \emph{deterministic} feasibility violation by interpreting Algorithm \ref{alg2} as an inexact projected gradient method applied to the associated feasibility problem (see Lemmas \ref{l-rec2}, \ref{l-cnst3} and \ref{l-cnst4}). This result, together with several technical lemmas and a carefully constructed potential function, is then used to prove Theorem \ref{thm:mm}.

The following lemma establishes a relationship between $h(x_{k+1})$ and $h(x_k)$, which will  be used to derive bounds for $\|c(x_k)\|^2$, where $h$ is defined in \eqref{def-h}.

\begin{lemma}\label{l-rec2} 
Suppose that Assumption \ref{a1} holds, and $x_{k+1}$ is generated by Algorithm \ref{alg2} for some $k\geq1$ with $\rho_k\eta_k\leq(\sqrt{5}-1)/(2L)$.  Then we have
\[
h(x_{k+1})+2^{\theta-2}\gamma^2\rho_k\eta_k[h(x_{k+1})]^{\theta}\leq h(x_k)+L_f^2\rho_k^{-1}\eta_k/2, 
\]
where $\rho_k$ and $\eta_k$ are given in Algorithm \ref{alg2}, $L_f$, $\gamma$ and $\theta$ are given in Assumption \ref{a1},  and $L$ and $h$ are defined in \eqref{def-L} and \eqref{def-h}, respectively.
\end{lemma}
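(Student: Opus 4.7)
The plan is to observe that Lemma \ref{l-rec2} is structurally identical to Lemma \ref{l-rec}, and that its proof only relies on properties of $G_k$ that are preserved in Algorithm \ref{alg2}. Specifically, the only information about $g_k$ used in the proof of Lemma \ref{l-rec} is the bound $\|g_k\|\leq L_f$, which holds in Algorithm \ref{alg2} as well because $g_k=\Pi_{\cB(L_f)}(\cdot)\in\cB(L_f)$ by construction. The algorithm-specific recursive formula for $g_k$ (recursive momentum versus Polyak momentum) plays no role in this feasibility-type bound.

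Concretely, I would reuse the rescaling $\widetilde G_k=\rho_k^{-1}G_k$, $\tilde\eta_k=\rho_k\eta_k$, so that the update becomes $x_{k+1}=\Pi_\mcX(x_k-\tilde\eta_k\widetilde G_k)$. From the projection optimality condition, I get
\[
\nabla h(x_{k+1})+\tilde\eta_k^{-1}(x_k-x_{k+1})-\widetilde G_k\in\nabla h(x_{k+1})+\mcN_\mcX(x_{k+1}).
\]
Applying the error bound condition \eqref{err-cond} together with the identity $\nabla h(x)=\nabla c(x)c(x)$ bounds $2^{\theta}\gamma^2[h(x_{k+1})]^\theta$ above by the squared norm of the element displayed above. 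I then split via the triangle inequality and use the $L$-smoothness of $h$ to replace $\nabla h(x_{k+1})-\nabla h(x_k)$ by $L\|x_{k+1}-x_k\|$, producing a bound in terms of $\|x_{k+1}-x_k\|^2$, $\langle \widetilde G_k-\nabla h(x_k),x_{k+1}-x_k\rangle$, and $\|\widetilde G_k-\nabla h(x_k)\|^2$.

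In parallel, the standard descent lemma for $h$ combined with the variational inequality $\langle \widetilde G_k,x_{k+1}-x_k\rangle\leq-\tilde\eta_k^{-1}\|x_{k+1}-x_k\|^2$ (which follows from $x_k\in\mcX$ and the projection characterization) gives the one-step descent
\[
h(x_{k+1})+2^{\theta-2}\gamma^2\tilde\eta_k[h(x_{k+1})]^{\theta}\leq h(x_k)+\tfrac{1}{2}\bigl(L^2\tilde\eta_k-\tilde\eta_k^{-1}+L\bigr)\|x_{k+1}-x_k\|^2+\tfrac{\tilde\eta_k}{2}\|\widetilde G_k-\nabla h(x_k)\|^2.
\]
The condition $\rho_k\eta_k\leq(\sqrt5-1)/(2L)$, i.e.\ $L\tilde\eta_k\leq(\sqrt5-1)/2$, makes the $\|x_{k+1}-x_k\|^2$ coefficient nonpositive, so that term can be dropped. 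Finally, since $G_k=g_k+\rho_k\nabla c(x_k)c(x_k)$ with $\|g_k\|\leq L_f$ (the projection in step 5 of Algorithm \ref{alg2} guarantees this), I have $\|\widetilde G_k-\nabla h(x_k)\|=\rho_k^{-1}\|g_k\|\leq \rho_k^{-1}L_f$. Substituting and undoing the $\tilde\eta_k=\rho_k\eta_k$ rescaling yields exactly
\[
h(x_{k+1})+2^{\theta-2}\gamma^2\rho_k\eta_k[h(x_{k+1})]^{\theta}\leq h(x_k)+L_f^2\rho_k^{-1}\eta_k/2.
\]

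There is no real obstacle here: the entire argument is a transcription of the proof of Lemma \ref{l-rec}, with the sole observation that the boundedness $\|g_k\|\leq L_f$ used in that proof comes from the truncation $\Pi_{\cB(L_f)}$ in step 5 of Algorithm \ref{alg2}, independently of whether the inner formula uses a recursive or a Polyak momentum. This is precisely why the truncation was introduced in the first place.
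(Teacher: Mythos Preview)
Your proposal is correct and matches the paper's approach exactly: the paper's proof of Lemma \ref{l-rec2} simply states that it follows from similar arguments as in the proof of Lemma \ref{l-rec}, and your key observation---that the only property of $g_k$ used is $\|g_k\|\leq L_f$, which the truncation $\Pi_{\cB(L_f)}$ in Algorithm \ref{alg2} guarantees regardless of the momentum scheme---is precisely the point.
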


\begin{proof}
The proof of this lemma follows from similar arguments as in the proof of Lemma \ref{l-rec}.
\end{proof}

The next two lemmas derive bounds for $\|c(x_k)\|^2$ under two different choices of  $\rho_k$, $\eta_k$ and $\alpha_k$ in Algorithm \ref{alg2}.

\begin{lemma}\label{l-cnst3}
Let $\tK_2$ and $C_2$ be given in \eqref{def-tK3} and \eqref{def-C3},  respectively. 
Suppose that Assumption \ref{a1} holds with $\theta\in[1,2)$ and  $\{x_k\}$ is generated by Algorithm \ref{alg2} with $\{\rho_k\}$, $\{\eta_k\}$ and $\{\alpha_k\}$ given in \eqref{def-para3}. Then  we have $\|c(x_k)\|^2\leq 2C_2k^{-1/2}$ for all $k\geq\tK_2$.
\end{lemma}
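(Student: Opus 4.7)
The plan is to prove $h(x_k) \leq C_2 k^{-1/2}$ for all $k \geq \tK_2$ by induction on $k$, where $h$ is defined in \eqref{def-h}; the claim of the lemma then follows since $\|c(x_k)\|^2 = 2h(x_k)$. The argument will closely mirror the proof of Lemma \ref{l-cnst2}, adapting the parameter exponents to match \eqref{def-para3} and replacing the target rate $k^{-2\nu/\theta}$ with $k^{-1/2}$.

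For the base case $k = \tK_2$, I would combine Assumption \ref{a1}(iv), which gives $\|c(x_{\tK_2})\| \leq C_c$, with the inequality $C_2 \geq \tK_2^{1/2} C_c^2/2$ from the definition \eqref{def-C3}, to conclude $h(x_{\tK_2}) \leq C_c^2/2 \leq C_2 \tK_2^{-1/2}$.

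For the inductive step, assuming $h(x_k) \leq C_2 k^{-1/2}$ for some $k \geq \tK_2$, I would first verify the hypothesis $\rho_k\eta_k \leq (\sqrt{5}-1)/(2L)$ of Lemma \ref{l-rec2}. With the parameters of \eqref{def-para3}, $\rho_k\eta_k = k^{-(2-\theta)/4}/\log(k+2)$, which is decreasing in $k$ for $\theta \in [1,2)$; the bound $\tK_2 \geq (8L)^{4/(2-\theta)}$ in \eqref{def-tK3} is included precisely to make this condition hold at $k = \tK_2$, and hence for all larger $k$. Lemma \ref{l-rec2} then yields
\[
h(x_{k+1}) + 2^{\theta-2}\gamma^2 \rho_k\eta_k\, [h(x_{k+1})]^\theta \;\leq\; h(x_k) + L_f^2 \rho_k^{-1}\eta_k/2.
\]
Introducing the strictly increasing auxiliary function $\phi(t) := t + 2^{\theta-2}\gamma^2 \rho_k\eta_k\, t^\theta$, exactly as in \eqref{def-phi2}, the induction closes once I show $\phi(C_2(k+1)^{-1/2}) \geq h(x_k) + L_f^2\rho_k^{-1}\eta_k/2$, since strict monotonicity of $\phi$ transfers the bound from $\phi$ to $h(x_{k+1})$.

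The main obstacle, as in Lemma \ref{l-cnst2}, will be verifying this last inequality by careful asymptotic bookkeeping. Expanding $\phi$, using the convexity estimate $(k+1)^{-1/2} - k^{-1/2} \geq -\tfrac{1}{2}k^{-3/2}$ together with $(k/(k+1))^{\theta/2} \geq 1/2$, and substituting $\rho_k = k^{\theta/4}$ and $\eta_k = k^{-1/2}/\log(k+2)$, the difference $\phi(C_2(k+1)^{-1/2}) - C_2 k^{-1/2} - L_f^2\rho_k^{-1}\eta_k/2$ factors, up to a positive prefactor $k^{-(\theta+2)/4}/\log(k+2)$, into an expression of the form
\[
C_2^\theta 2^{\theta-3}\gamma^2 \;-\; L_f^2/2 \;-\; \tfrac{1}{2} C_2 \log(k+2)\, k^{(\theta-4)/4}.
\]
The first two terms produce a strictly positive constant by using $C_2 \geq 2^{2-\theta/2} L_f^2 \gamma^{-2}$ and $C_2, \theta \geq 1$ from \eqref{def-C3}, which give $C_2^\theta 2^{\theta-3}\gamma^2 \geq 2^{\theta/2-1} L_f^2 \geq L_f^2/\sqrt{2}$. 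The remaining $k$-dependent term tends to $0$ as $k \to \infty$, since $(\theta-4)/4 < 0$; to absorb it uniformly for $k \geq \tK_2$, I would invoke the condition $\tK_2 \geq (e^{-1}\gamma^{-2} 2^{2-\theta/2} \log(e^2+2))^{4/(2-\theta)}$ from \eqref{def-tK3} together with monotonicity of $t \mapsto t^{(\theta-4)/4}\log(t+2)$ on $[e^2,\infty)$, in direct analogy with the use of $t^{-1/(2\theta)}\log(t+2)$ on $[e^{2\theta},\infty)$ in the proof of Lemma \ref{l-cnst2}. This is the delicate step where the various pieces of \eqref{def-tK3} and \eqref{def-C3} must be used simultaneously, and I would expect to need to track the numerical constants carefully to close the inductive step at the very first iteration $k = \tK_2$.
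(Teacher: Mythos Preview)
Your approach is exactly the paper's: induction on $h(x_k)\le C_2k^{-1/2}$, with the base case from $C_2\ge \tK_2^{1/2}C_c^2/2$, the step verified via Lemma~\ref{l-rec2} after checking $\rho_k\eta_k\le(\sqrt5-1)/(2L)$, and the closing inequality obtained through the strictly increasing auxiliary $\phi$. The only substantive slip is numerical: the crude estimate $(k/(k+1))^{\theta/2}\ge 1/2$ that you use is not sharp enough to close the bracket with the constants fixed in \eqref{def-tK3}--\eqref{def-C3}. For instance, at $\theta=1$, $\gamma=L_f=1$ and $C_2=2^{3/2}$, your bracket becomes $C_2 2^{-2}-L_f^2/2-\tfrac12 C_2\cdot 2^{-3/2}\approx -0.29<0$. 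The paper instead uses the tighter bound $(k/(k+1))^{\theta/2}\ge 2^{-\theta/2}$ (from $k/(k+1)\ge 1/2$), which makes the leading coefficient $C_2^\theta 2^{\theta/2-2}\gamma^2$, and then $C_2^\theta\ge C_2$ reduces the bracket to
\[
C_2\,2^{\theta/2-2}\gamma^2-\tfrac12 C_2\,k^{(\theta-4)/4}\log(k+2)-L_f^2/2.
\]
For the $k$-dependent term the paper factorizes $k^{(\theta-4)/4}\log(k+2)=\big(k^{-1/2}\log(k+2)\big)\cdot k^{(\theta-2)/4}$ and bounds the two factors separately using $\tK_2\ge e^2$ and $\tK_2\ge\big(e^{-1}\gamma^{-2}2^{2-\theta/2}\log(e^2+2)\big)^{4/(2-\theta)}$, obtaining $k^{(\theta-4)/4}\log(k+2)\le 2^{\theta/2-2}\gamma^2$. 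The bracket then collapses to $C_2 2^{\theta/2-2}\gamma^2/2-L_f^2/2\ge 0$, which is exactly $C_2\ge 2^{2-\theta/2}L_f^2\gamma^{-2}$ from \eqref{def-C3}. With this one refinement your sketch goes through verbatim; your direct monotonicity of $t\mapsto t^{(\theta-4)/4}\log(t+2)$ on $[e^2,\infty)$ is also correct and can replace the paper's factorization.
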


\begin{proof}
Let $h$ be defined in \eqref{def-h}. To prove this lemma, it is equivalent to show that $h(x_k)\leq C_2k^{-1/2}$ for all $k\geq\tK_2$.  We now prove this by induction. Indeed, notice from Algorithm \ref{alg2} that $x_{\tK_2}\in X$. It then follows from \eqref{def-C3}, \eqref{def-h} and Assumption \ref{a1}(iv) that
\[
h(x_{\tK_2})\overset{\eqref{def-h}}{=}\frac{1}{2}\|c(x_{\tK_2})\|^2\leq\frac{1}{2}C_c^2\overset{\eqref{def-C3}}{\leq} C_2\tK_2^{-1/2}.
\]
Hence,  the conclusion holds for $k=\tK_2$. Now, suppose for induction that $h(x_k)\leq C_2k^{-1/2}$ holds for some $k\geq\tK_2$.  Recall that $\theta\in[1,2)$ and $\rho_k$, $\eta_k$ and $\tK_2$ are given in \eqref{def-para3} and \eqref{def-tK3}.  In view of these, one can observe that 
\[
\rho_k\eta_k\overset{\eqref{def-para3}}=\frac{k^{\frac{\theta-2}{4}}}{\log(k+2)}\leq k^{\frac{\theta-2}{4}}\leq\tK_2^{\frac{\theta-2}{4}}\overset{\eqref{def-tK3}}\leq\frac{1}{8L}\leq\frac{\sqrt{5}-1}{2L},
\]
and hence Lemma \ref{l-rec2} holds for such $k$.  Using Lemma \ref{l-rec2} with the choice of $\rho_k$ and $\eta_k$ given in \eqref{def-para3}, we obtain that
\beq\label{rec3}
h(x_{k+1})+2^{\theta-2}\gamma^2k^{\frac{\theta-2}{4}}[h(x_{k+1})]^{\theta}/\log(k+2)\leq h(x_k)+L_f^2k^{-\frac{\theta+2}{4}}/(2\log(k+2)).
\eeq
Further,  let
\beq\label{def-phi3}
\phi(t)=t+2^{\theta-2}\gamma^2k^{\frac{\theta-2}{4}}t^\theta/\log(k+2).
\eeq
Notice from \eqref{def-C3} that $C_2\geq1$. Using this and \eqref{def-phi3}, we have
\begin{align}
&\ \phi(C_2(k+1)^{-\frac12})-C_2k^{-\frac12}-L_f^2k^{-\frac{\theta+2}{4}}/(2\log(k+2))\nn\\
& \overset{\eqref{def-phi3}}{=}C_2^\theta 2^{\theta-2}\gamma^2k^{\frac{\theta-2}{4}}(k+1)^{-\frac{\theta}{2}}/\log(k+2)+C_2(k+1)^{-\frac12}-C_2k^{-\frac12}-L_f^2k^{-\frac{\theta+2}{4}}/(2\log(k+2))\nn\\
&\ \geq C_2^\theta 2^{\theta-2}\gamma^2k^{\frac{\theta-2}{4}}(k+1)^{-\frac{\theta}{2}}/\log(k+2)-C_2 k^{-\frac{3}{2}}/2-L_f^2k^{-\frac{\theta+2}{4}}/(2\log(k+2))\nn\\
&\ = \frac{k^{-\frac{\theta+2}{4}}}{\log(k+2)}\left(C_2^\theta 2^{\theta-2}\gamma^2\left(\frac{k}{k+1}\right)^{\frac{\theta}{2}}-C_2 k^{\frac{\theta-4}{4}}\log(k+2)/2-L_f^2/2\right)\nn\\
&\ \geq \frac{k^{-\frac{\theta+2}{4}}}{\log(k+2)}\left(C_22^{\frac{\theta}{2}-2}\gamma^2-C_2 k^{\frac{\theta-4}{4}}\log(k+2)/2-L_f^2/2\right).\label{l10-e1}
\end{align}
where the first inequality follows from $(k+1)^{-1/2}-k^{-1/2}\geq-k^{-3/2}/2$ thanks to the convexity of $t^{-1/2}$, and the second inequality is due to $\theta\geq1$, $C_2\geq1$ and $k/(k+1)\geq1/2$. In addition, one can verify that $t^{-1/2}\log(t+2)$ is decreasing on $[e^2,\infty)$. Using this, \eqref{def-tK3}, $1\leq\theta<2$ and $k\geq\tK_2\geq e^2$, we obtain that  
\[
k^{-\frac{1}{2}}\log(k+2)\leq\log(e^2+2)/e,\quad k^{\frac{\theta-2}{4}}
\leq\tK_2^{\frac{\theta-2}{4}}\overset{\eqref{def-tK3}}\leq e\gamma^2/(2^{2-\frac{\theta}{2}}\log(e^2+2)).
\]
 Multiplying both sides of these two inequalities yields $k^{\frac{\theta-4}{4}}\log(k+2)\leq2^{\frac{\theta}{2}-2}\gamma^2$, 
which together with \eqref{def-C3} implies that
\[
C_22^{\frac{\theta}{2}-2}\gamma^2-C_2 k^{\frac{\theta-4}{4}}\log(k+2)/2-L_f^2/2\geq C_22^{\frac{\theta}{2}-2}\gamma^2/2-L_f^2/2\overset{\eqref{def-C3}}{\geq}0.
\]
Using this, \eqref{rec3}, \eqref{def-phi3}, \eqref{l10-e1},  and the induction hypothesis that $h(x_k)\leq C_2k^{-1/2}$,  we obtain that
\[
\phi(C_2(k+1)^{-1/2})\geq C_2k^{-1/2}+\frac{L_f^2k^{-\frac{\theta+2}{4}}}{2\log(k+2)}\geq h(x_k)+\frac{L_f^2k^{-\frac{\theta+2}{4}}}{2\log(k+2)}\overset{\eqref{rec3}\eqref{def-phi3}}{\geq}\phi(h(x_{k+1})).
\]
It then follows from this inequality and the strict monotonicity of $\phi$ on $[0,\infty)$ that $h(x_{k+1})\leq C_2(k+1)^{-1/2}$. Hence, the induction is completed and the conclusion of this lemma holds. 
\end{proof}

\begin{lemma}\label{l-cnst4}
Let $\tK_3$, and $C_3$ be given in \eqref{def-tK4} and \eqref{def-nuC4}, respectively. 
Suppose that Assumption \ref{a1} holds, $\theta$ is given in Assumption \ref{a1}, and $\{x_k\}$ is generated by Algorithm \ref{alg2} with $\{\rho_k\}$, $\{\eta_k\}$ and $\{\alpha_k\}$ given in \eqref{def-para4}. Then  we have $\|c(x_k)\|^2\leq 2C_3k^{-1/\theta}$ for all $k\geq\tK_3$. 
\end{lemma}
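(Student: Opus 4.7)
The plan is to prove this by induction on $k \geq \tK_3$, following the same template as Lemma \ref{l-cnst3} but with the parameters of \eqref{def-para4} in place of \eqref{def-para3}. Setting $h(x) = \|c(x)\|^2/2$ as in \eqref{def-h}, the goal becomes $h(x_k) \leq C_3 k^{-1/\theta}$. The base case $k = \tK_3$ is immediate: since $x_{\tK_3}\in X$, Assumption \ref{a1}(iv) gives $h(x_{\tK_3}) \leq C_c^2/2$, which is at most $C_3\tK_3^{-1/\theta}$ by the entry $\tK_3^{1/\theta}C_c^2/2$ in the maximum defining $C_3$ in \eqref{def-nuC4}.

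For the inductive step, I would first verify the precondition of Lemma \ref{l-rec2}: with $\rho_k,\eta_k$ from \eqref{def-para4}, $\rho_k\eta_k = 1/(4\log(k+2)) \leq 1/(4\log(\tK_3+2)) \leq 1/(8L)$, where the last bound uses the entry $e^{2L}$ in \eqref{def-tK4}. Applying Lemma \ref{l-rec2} then yields $h(x_{k+1}) + 2^{\theta-4}\gamma^2[h(x_{k+1})]^\theta/\log(k+2) \leq h(x_k) + L_f^2 k^{-1}/(8\log(k+2))$. Introducing the auxiliary function $\phi(t) := t + 2^{\theta-4}\gamma^2 t^\theta/\log(k+2)$, which is strictly increasing on $[0,\infty)$, it suffices, by the induction hypothesis, to verify $\phi(C_3(k+1)^{-1/\theta}) \geq C_3 k^{-1/\theta} + L_f^2 k^{-1}/(8\log(k+2))$.

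Expanding $\phi$ and using convexity of $t^{-1/\theta}$ to bound $(k+1)^{-1/\theta}-k^{-1/\theta}\geq -\theta^{-1}k^{-1/\theta-1}$, together with $k/(k+1)\geq 1/2$, $\theta \geq 1$, and $C_3 \geq 1$, reduces the required inequality to $C_3 2^{\theta-5}\gamma^2 - C_3 k^{-1/\theta}\log(k+2) - L_f^2/8 \geq 0$. Since $t^{-1/(2\theta)}\log(t+2)$ is decreasing on $[e^{2\theta},\infty)$ and $k\geq\tK_3\geq e^{2\theta}$, one obtains $k^{-1/(2\theta)}\log(k+2)\leq e^{-1}\log(e^{2\theta}+2)$; multiplying by the separate bound $k^{-1/(2\theta)}\leq\tK_3^{-1/(2\theta)}\leq e\gamma^2/(2^{6-\theta}\log(e^{2\theta}+2))$, guaranteed by the last entry of \eqref{def-tK4}, yields $k^{-1/\theta}\log(k+2)\leq 2^{\theta-6}\gamma^2$. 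The entry $2^{3-\theta}L_f^2\gamma^{-2}$ in \eqref{def-nuC4} then supplies $C_3 2^{\theta-6}\gamma^2 \geq L_f^2/8$, and strict monotonicity of $\phi$ closes the induction.

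The principal obstacle is purely bookkeeping: each entry in \eqref{def-tK4} and \eqref{def-nuC4} is tailored to enforce exactly one of the three inequalities driving the induction -- the step-size condition $\rho_k\eta_k \leq 1/(8L)$, the logarithmic decay $k^{-1/\theta}\log(k+2)\leq 2^{\theta-6}\gamma^2$, and the residual slack $C_3 2^{\theta-6}\gamma^2 \geq L_f^2/8$ -- so I would carefully check that no compounding of constants or additional slack is needed. Once these thresholds align, the argument proceeds identically in spirit to Lemma \ref{l-cnst3}, with the natural decay exponent $-1/\theta$ arising from balancing the $t^\theta$ term in $\phi$ against the $L_f^2 k^{-1}$ drift; crucially, and unlike Lemma \ref{l-cnst3}, no restriction $\theta<2$ enters, since $\rho_k = k^{1/2}$ is independent of $\theta$.
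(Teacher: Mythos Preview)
Your proposal is correct and follows essentially the same approach as the paper, which simply instructs the reader to rerun the induction argument of Lemma \ref{l-cnst2} with $\nu=1/2$ and $\tK_1,C_1$ replaced by $\tK_3,C_3$. Your write-up is in fact a faithful expansion of that deferred argument: the base case via the entry $\tK_3^{1/\theta}C_c^2/2$ in \eqref{def-nuC4}, the verification of $\rho_k\eta_k\leq 1/(8L)$ via the entry $e^{2L}$ in \eqref{def-tK4}, the convexity bound on $(k+1)^{-1/\theta}-k^{-1/\theta}$, the decay estimate $k^{-1/\theta}\log(k+2)\leq 2^{\theta-6}\gamma^2$ via the last entry of \eqref{def-tK4}, and the residual slack via $C_3\geq 2^{3-\theta}L_f^2\gamma^{-2}$ all match the corresponding steps in the proof of Lemma \ref{l-cnst2} specialized to $\nu=1/2$.
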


\begin{proof}
The proof of this lemma follows from similar arguments as in the proof of Lemma \ref{l-cnst2} with $\nu=1/2$, and $\tK_1$ and $C_1$ replaced with $\tK_3$ and $C_3$, respectively. 
\end{proof}

The following lemma provides a relationship between $\bbE\left[\|g_{k+1}-\nabla f(x_{k+1})\|^2\right]$ and 
$\bbE\left[\|g_k-\nabla f(x_k)\|^2\right]$.

\begin{lemma}\label{l-exp2}
Suppose that Assumption \ref{a1} and \ref{a-mm} hold, and $\{g_k\}$ and $\{x_k\}$ are generated by Algorithm \ref{alg2}. Then for all $k\geq1$, we have
\[
\bbE\left[\|g_{k+1}-\nabla f(x_{k+1})\|^2\right]\leq(1-\alpha_k)\bbE\left[\|g_k-\nabla f(x_k)\|^2\right]+L_{\nabla f}^2\alpha_k^{-1}\bbE\left[\|x_{k+1}-x_k\|^2\right]+\sigma^2\alpha_k^2,
\]
where $\{\alpha_k\}$ is given in Algorithm \ref{alg2}, and $\sigma$ and $L_{\nabla f}$ are given in Assumptions \ref{a1} and \ref{a-storm}, respectively. 
\end{lemma}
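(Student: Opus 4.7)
The plan is to mimic the structure of the proof of Lemma \ref{l-exp}, but to adapt it to the Polyak-style recursion. The key observation, as in Lemma \ref{l-exp}, is that since $f$ is $L_f$-Lipschitz continuous on $\mcX$, we have $\nabla f(x_{k+1})\in\cB(L_f)$, so $\nabla f(x_{k+1})=\Pi_{\cB(L_f)}(\nabla f(x_{k+1}))$. Combining this with the nonexpansiveness of $\Pi_{\cB(L_f)}$ and the update rule for $g_{k+1}$ in Algorithm \ref{alg2} yields
\[
\|g_{k+1}-\nabla f(x_{k+1})\|^2\leq\|(1-\alpha_k)g_k+\alpha_k\nabla\tf(x_{k+1},\xi_{k+1})-\nabla f(x_{k+1})\|^2.
\]

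Next, I would decompose the right-hand side as
\[
(1-\alpha_k)(g_k-\nabla f(x_{k+1}))+\alpha_k(\nabla\tf(x_{k+1},\xi_{k+1})-\nabla f(x_{k+1})),
\]
expand the square, and take the conditional expectation given $\Xi_k=\{\xi_1,\ldots,\xi_k\}$. Since Assumption \ref{a1}(iii) gives $\bbE[\nabla\tf(x_{k+1},\xi_{k+1})-\nabla f(x_{k+1})\mid\Xi_k]=0$ and $\bbE[\|\nabla\tf(x_{k+1},\xi_{k+1})-\nabla f(x_{k+1})\|^2\mid\Xi_k]\leq\sigma^2$, the cross term vanishes and the noise term contributes at most $\sigma^2\alpha_k^2$, leaving
\[
\bbE[\|g_{k+1}-\nabla f(x_{k+1})\|^2\mid\Xi_k]\leq(1-\alpha_k)^2\|g_k-\nabla f(x_{k+1})\|^2+\sigma^2\alpha_k^2.
\]

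To reach the target bound I would then apply the weighted Young inequality $\|a+b\|^2\leq(1+\beta)\|a\|^2+(1+\beta^{-1})\|b\|^2$ with $\beta=\alpha_k/(1-\alpha_k)$ to the splitting $g_k-\nabla f(x_{k+1})=(g_k-\nabla f(x_k))+(\nabla f(x_k)-\nabla f(x_{k+1}))$. With this choice $(1-\alpha_k)^2(1+\beta)=(1-\alpha_k)$ and $(1-\alpha_k)^2(1+\beta^{-1})=(1-\alpha_k)^2/\alpha_k\leq\alpha_k^{-1}$, so Assumption \ref{a-mm} yields
\[
(1-\alpha_k)^2\|g_k-\nabla f(x_{k+1})\|^2\leq(1-\alpha_k)\|g_k-\nabla f(x_k)\|^2+L_{\nabla f}^2\alpha_k^{-1}\|x_{k+1}-x_k\|^2.
\]
Taking full expectations produces the stated inequality.

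There is no real obstacle here; the argument is essentially routine once the right Young parameter is chosen. The only mildly delicate point is selecting $\beta=\alpha_k/(1-\alpha_k)$ so that the coefficient on $\|g_k-\nabla f(x_k)\|^2$ matches the required $(1-\alpha_k)$ exactly (rather than $(1-\alpha_k)^2$, which would not yield a useful variance-reduction recursion), while simultaneously keeping the coefficient on the smoothness term under $\alpha_k^{-1}$; the boundary case $\alpha_k=1$ is handled by interpreting the bound directly since then $g_{k+1}=\Pi_{\cB(L_f)}(\nabla\tf(x_{k+1},\xi_{k+1}))$ and the estimate follows immediately from nonexpansiveness and Assumption \ref{a1}(iii).
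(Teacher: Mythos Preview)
Your proposal is correct and follows essentially the same approach as the paper's proof: nonexpansiveness of $\Pi_{\cB(L_f)}$, the decomposition into $(1-\alpha_k)(g_k-\nabla f(x_{k+1}))+\alpha_k(\nabla\tf(x_{k+1},\xi_{k+1})-\nabla f(x_{k+1}))$, conditional expectation to kill the cross term, Young's inequality with parameter $\alpha_k/(1-\alpha_k)$, and a separate treatment of the boundary case $\alpha_k=1$. The only cosmetic difference is that the paper expands $\|a+b\|^2$ and applies Young to the inner product $2\langle a,b\rangle$, whereas you apply the equivalent weighted form $\|a+b\|^2\le(1+\beta)\|a\|^2+(1+\beta^{-1})\|b\|^2$ directly.
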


\begin{proof}
Let $\Xi_k=\{\xi_1,\ldots, \xi_k\}$ denote the collection of samples drawn up to iteration $k-1$ in Algorithm 
\ref{alg2}.  It then follows from Assumption \ref{a1}(iii) that 
\[
\bbE[\nabla \tf(x_{k+1},\xi_{k+1})-\nabla f(x_{k+1})|\Xi_k]=0, \quad \bbE[\|\nabla \tf(x_{k+1},\xi_{k+1})-\nabla f(x_{k+1})\|^2|\Xi_k] \leq \sigma^2.
\]
Also, notice from \eqref{cf-bnd} that $\nabla f(x_{k+1})\in\cB(L_f)$ and hence $\nabla f(x_{k+1})=\Pi_{\cB(L_f)}(\nabla f(x_{k+1}))$. By these, the expression of $g_{k+1}$ in Algorithm \ref{alg2}, and the nonexpansiveness of the projection operator $\Pi_{\cB(L_f)}$, one has
\begin{align*}
&\bbE[\|g_{k+1}-\nabla f(x_{k+1})\|^2|\Xi_k]=\bbE[\|\Pi_{\cB(L_f)}\big((1-\alpha_k)g_k+\alpha_k\nabla \tf(x_{k+1},\xi_{k+1})\big)-\Pi_{\cB(L_f)}(\nabla f(x_{k+1}))\|^2|\Xi_k]\\ 
& \leq \bbE[\|(1-\alpha_k)g_k+\alpha_k\nabla \tf(x_{k+1},\xi_{k+1})-\nabla f(x_{k+1})\|^2|\Xi_k]\\ 
&=\bbE[\|(1-\alpha_k)(g_k-\nabla f(x_{k+1}))+\alpha_k(\nabla \tf(x_{k+1},\xi_{k+1})-\nabla f(x_{k+1}))\|^2|\Xi_k] \\
&=(1-\alpha_k)^2\|g_k-\nabla f(x_{k+1})\|^2+\alpha_k^2\bbE[\|\nabla \tf(x_{k+1},\xi_{k+1})-\nabla f(x_{k+1})\|^2|\Xi_k] \\
&\quad+2\alpha_k(1-\alpha_k)\langle g_k-\nabla f(x_{k+1}), \bbE[ \nabla \tf(x_{k+1},\xi_{k+1})-\nabla f(x_{k+1})|\Xi_k]\rangle  \\ 
& \leq (1-\alpha_k)^2\|g_k-\nabla f(x_{k+1})\|^2+\sigma^2 \alpha_k^2. 
\end{align*}
Taking expectation on both sides of this inequality yields
\beq\label{l7-e1}
\bbE[\|g_{k+1}-\nabla f(x_{k+1})\|^2] \leq (1-\alpha_k)^2\bbE[\|g_k-\nabla f(x_{k+1})\|^2]+\sigma^2 \alpha_k^2. 
\eeq
We divide the remainder of the proof by considering two separate cases: $\alpha_k=1$ and $0<\alpha_k<1$.

Case 1) $\alpha_k=1$. It follows from this and \eqref{l7-e1} that $\bbE[\|g_{k+1}-\nabla f(x_{k+1})\|^2] \leq \sigma^2 \alpha_k^2$ and hence the conclusion of this lemma clearly holds.

Case 2) $0<\alpha_k<1$.  By this, \eqref{l7-e1} and Assumption \ref{a-mm}, one has
\begin{align*}
&\bbE[\|g_{k+1}-\nabla f(x_{k+1})\|^2]\overset{\eqref{l7-e1}}\leq(1-\alpha_k)^2\bbE\left[\|g_k-\nabla f(x_k)+\nabla f(x_k)-\nabla f(x_{k+1})\|^2\right]+\sigma^2\alpha_k^2\\
&=(1-\alpha_k)^2\bbE\left[\|g_k-\nabla f(x_k)\|^2\right]+(1-\alpha_k)^2\bbE\left[\|\nabla f(x_k)-\nabla f(x_{k+1})\|^2\right]\\
&\quad+2(1-\alpha_k)^2\bbE\left[\langle g_k-\nabla f(x_k),\nabla f(x_k)-\nabla f(x_{k+1})\rangle\right]+\sigma^2\alpha_k^2\\
&\leq(1-\alpha_k)^2\bbE\left[\|g_k-\nabla f(x_k)\|^2\right]+(1-\alpha_k)^2\bbE\left[\|\nabla f(x_k)-\nabla f(x_{k+1})\|^2\right]\\
&\quad+(1-\alpha_k)^2\left(\frac{\alpha_k}{1-\alpha_k}\bbE\left[\|g_k-\nabla f(x_k)\|^2\right]+\frac{1-\alpha_k}{\alpha_k}\bbE\left[\|\nabla f(x_k)-\nabla f(x_{k+1})\|^2\right]\right)+\sigma^2\alpha_k^2\\
&=(1-\alpha_k)\bbE\left[\|g_k-\nabla f(x_k)\|^2\right]+(1-\alpha_k)^2\alpha_k^{-1}\bbE\left[\|\nabla f(x_k)-\nabla f(x_{k+1})\|^2\right]+\sigma^2\alpha_k^2\\
&\leq(1-\alpha_k)\bbE\left[\|g_k-\nabla f(x_k)\|^2\right]+L_{\nabla f}^2\alpha_k^{-1}\bbE\left[\|x_{k+1}-x_k\|^2\right]+\sigma^2\alpha_k^2,
\end{align*}
where the second inequality follows from $0<\alpha_k<1$ and Young's inequality, and the last inequality is due to Assumption \ref{a-mm}  and $0<\alpha_k<1$. Hence, the conclusion of this lemma also holds in this case.
\end{proof}

The next lemma provides an upper bound on $\bbE[Q_{\rho_k}(x_k)+\|g_k-\nabla f(x_k)\|^2]$.

\begin{lemma}\label{l-pfunc2}
Suppose that Assumptions \ref{a1} and \ref{a-mm} hold, and $\{g_k\}$ and $\{x_k\}$ are generated by Algorithm \ref{alg1} with $\eta_k \leq \alpha_k\leq 1$. 
Then for all $k\geq1$, we have
\begin{align*}
\bbE\left[Q_{\rho_k}(x_k)+\|g_k-\nabla f(x_k)\|^2\right]\leq&Q_{\rho_1}(x_1)+\|g_1-\nabla f(x_1)\|^2+\frac{1}{2}\sum_{i=1}^{k-1}(\rho_{i+1}-\rho_i)\bbE\left[\|c(x_{i+1})\|^2\right]\\
&+\frac{1}{2}\sum_{i=1}^{k-1}\left(L_{\nabla f}+\rho_iL-\eta_i^{-1}+2L_{\nabla f}^2\alpha_k^{-1}\right)\bbE\left[\|x_{i+1}-x_i\|^2\right]+\sigma^2\sum_{i=1}^{k-1}\alpha_i^2.
\end{align*}
where $\{\alpha_k\}$, $\{\rho_k\}$ and $\{\eta_k\}$ are given in Algorithm \ref{alg2},  $Q_\rho$  and $L$ are respectively defined in \eqref{def-Q} and \eqref{def-L}, and $\sigma$ and $L_{\nabla f}$  are given in Assumptions \ref{a1} and \ref{a-mm}, respectively. 
\end{lemma}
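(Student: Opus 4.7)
The plan is to mirror the proof of Lemma \ref{l-pfunc}, but with the simpler potential $Q_{\rho_k}(x_k) + \|g_k - \nabla f(x_k)\|^2$ (no weight $\zeta_k$) and substituting the Polyak-momentum variance bound from Lemma \ref{l-exp2} for the recursive-momentum bound used there. The hypothesis $\eta_k \leq \alpha_k \leq 1$ is what makes the two pieces fit together without a weight.

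First, I would exploit the $(L_{\nabla f} + \rho_k L)$-smoothness of $Q_{\rho_k}$ on $\mcX$, which follows from Assumption \ref{a-mm}, the $L$-smoothness of $\|c(\cdot)\|^2/2$, and the definition of $Q_\rho$ in \eqref{def-Q}, to write
\[
Q_{\rho_k}(x_{k+1}) \leq Q_{\rho_k}(x_k) + \langle \nabla Q_{\rho_k}(x_k), x_{k+1} - x_k \rangle + \tfrac{1}{2}(L_{\nabla f} + \rho_k L) \|x_{k+1} - x_k\|^2.
\]
Next, I would decompose the inner product as $\langle G_k, x_{k+1}-x_k\rangle + \langle \nabla Q_{\rho_k}(x_k) - G_k, x_{k+1}-x_k\rangle$, use the projection optimality $\langle G_k, x_{k+1} - x_k \rangle \leq -\eta_k^{-1} \|x_{k+1}-x_k\|^2$ from step 4 of Algorithm \ref{alg2}, the identity $\nabla Q_{\rho_k}(x_k) - G_k = \nabla f(x_k) - g_k$, and Young's inequality with parameter $\eta_k$ to obtain
\[
Q_{\rho_k}(x_{k+1}) \leq Q_{\rho_k}(x_k) + \tfrac{1}{2}(L_{\nabla f} + \rho_k L - \eta_k^{-1}) \|x_{k+1} - x_k\|^2 + \tfrac{\eta_k}{2}\|g_k - \nabla f(x_k)\|^2.
\]
Adding the penalty increment $Q_{\rho_{k+1}}(x_{k+1}) - Q_{\rho_k}(x_{k+1}) = \tfrac{1}{2}(\rho_{k+1}-\rho_k)\|c(x_{k+1})\|^2$ handles the change in $\rho$.

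I would then take expectations and add Lemma \ref{l-exp2} to both sides. The coefficient in front of $\bbE[\|g_k - \nabla f(x_k)\|^2]$ becomes $(1-\alpha_k) + \eta_k/2$; under the hypothesis $\eta_k \leq \alpha_k \leq 1$, this is at most $1 - \alpha_k/2 \leq 1$, so the term can be absorbed into the next potential value. Consolidating all the $\|x_{k+1}-x_k\|^2$ contributions yields coefficient $\tfrac{1}{2}(L_{\nabla f} + \rho_k L - \eta_k^{-1} + 2L_{\nabla f}^2\alpha_k^{-1})$, matching the statement. This gives the one-step recursion
\[
\bbE[\Phi_{k+1}] \leq \bbE[\Phi_k] + \tfrac{1}{2}(L_{\nabla f}+\rho_k L-\eta_k^{-1}+2L_{\nabla f}^2\alpha_k^{-1})\bbE[\|x_{k+1}-x_k\|^2] + \tfrac{1}{2}(\rho_{k+1}-\rho_k)\bbE[\|c(x_{k+1})\|^2] + \sigma^2\alpha_k^2,
\]
where $\Phi_k := Q_{\rho_k}(x_k) + \|g_k - \nabla f(x_k)\|^2$. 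Telescoping this inequality from $i=1$ to $k-1$ yields the claimed bound.

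The only delicate point is the coefficient balance $(1-\alpha_k) + \eta_k/2 \leq 1$ on $\|g_k - \nabla f(x_k)\|^2$, which is precisely why the hypothesis $\eta_k \leq \alpha_k \leq 1$ appears; the rest of the argument is a faithful adaptation of the recursive-momentum analysis with the constant weight $\zeta_k \equiv 1$ replacing the decaying weight used in Lemma \ref{l-pfunc}.
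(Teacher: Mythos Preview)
Your proposal is correct and follows essentially the same approach as the paper: derive the one-step descent inequality for $Q_{\rho_k}$ exactly as in Lemma \ref{l-pfunc}, add the Polyak-momentum variance bound from Lemma \ref{l-exp2}, and use $\eta_k\leq\alpha_k$ to ensure the coefficient on $\|g_k-\nabla f(x_k)\|^2$ is at most $1$, then telescope. The only cosmetic difference is that the paper writes this coefficient as $(1-\alpha_k+\eta_k)-\eta_k/2$ and bounds the first bracket by $1$ (retaining the spare $-\eta_k/2$ term, which it drops for the lemma but exploits later in the proof of Theorem \ref{thm:mm}), whereas you bound $(1-\alpha_k)+\eta_k/2\leq 1-\alpha_k/2\leq 1$ directly.
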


\begin{proof}
Observe from  \eqref{def-Q}, \eqref{def-L},  and Assumptions \ref{a1} and \ref{a-mm} that  $Q_{\rho_k}$ is 
$(L_{\nabla f}+\rho_k L)$-smooth. By this and similar arguments as for deriving \eqref{l5-e1},  one has that for all $k \geq 1$, 
\begin{align}
Q_{\rho_{k+1}}(x_{k+1})& \leq Q_{\rho_k}(x_k)+\frac{1}{2}\left(L_{\nabla f}+\rho_kL-\eta_k^{-1}\right)\|x_{k+1}-x_k\|^2+\frac{\eta_k}{2}\|g_k-\nabla f(x_k)\|^2 \nn \\
&\quad +\frac{1}{2}(\rho_{k+1}-\rho_k)\|c(x_{k+1})\|^2. \label{l11-ineq}
\end{align}
Notice from the assumption that $1-\alpha_k+\eta_k\leq1$. Using this, taking expectation on both sides of \eqref{l11-ineq}, and summing the resulting inequality with the inequality in Lemma \ref{l-exp2}, we obtain that
\begin{align}
&\bbE\left[Q_{\rho_{k+1}}(x_{k+1})+\|g_{k+1}-\nabla f(x_{k+1})\|^2\right]\nn\\
&\leq \bbE\left[Q_{\rho_k}(x_k)+\left(1-\alpha_k+\eta_k\right)\|g_k-\nabla f(x_k)\|^2\right]+\frac{1}{2}\left(L_{\nabla f}+\rho_kL-\eta_k^{-1}+2L_{\nabla f}^2\alpha_k^{-1}\right)\bbE\left[\|x_{k+1}-x_k\|^2\right]\nn\\
&\quad-\frac{\eta_k}{2}\bbE\left[\|g_k-\nabla f(x_k)\|^2\right]+\frac{1}{2}(\rho_{k+1}-\rho_k)\bbE\left[\|c(x_{k+1})\|^2\right]+\sigma^2\alpha_k^2\nn\\
&\leq \bbE\left[Q_{\rho_k}(x_k)+\|g_k-\nabla f(x_k)\|^2\right]+\frac{1}{2}\left(L_{\nabla f}+\rho_kL-\eta_k^{-1}+2L_{\nabla f}^2\alpha_k^{-1}\right)\bbE\left[\|x_{k+1}-x_k\|^2\right]\nn\\
&\quad-\frac{\eta_k}{2}\bbE\left[\|g_k-\nabla f(x_k)\|^2\right]+\frac{1}{2}(\rho_{k+1}-\rho_k)\bbE\left[\|c(x_{k+1})\|^2\right]+\sigma^2\alpha_k^2.\label{l11-e2}
\end{align}
The conclusion of this lemma follows by replacing $k$ with $i$ in the above inequalities and summing them up for all $1\leq i\leq k-1$.
\end{proof}

The following lemma provides an upper bound on $\dist^2\left(0,\nabla Q_{\rho_k}(x_{k+1})+\mcN_\mcX(x_{k+1})\right)$.

\begin{lemma}\label{l-dist2}
Suppose that Assumptions \ref{a1} and \ref{a-mm} hold, and $\{g_k\}$ and  $\{x_k\}$ are generated by Algorithm \ref{alg2}. Then for all $k\geq1$, we have
\[
\dist^2\left(0,\nabla Q_{\rho_k}(x_{k+1})+\mcN_\mcX(x_{k+1})\right)\leq3\left(\eta_k^{-2}+(L_{\nabla f}+\rho_kL)^2\right)\|x_{k+1}-x_k\|^2+3\|g_k-\nabla f(x_k)\|^2.
\]
where $\{\rho_k\}$ and $\{\eta_k\}$ are given in Algorithm \ref{alg2}, $L_{\nabla f}$ is given in Assumption \ref{a-mm}, and $L$ and $Q_\rho$ are defined in \eqref{def-L} and \eqref{def-Q}, respectively.
\end{lemma}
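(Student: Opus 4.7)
The proof will mirror that of Lemma \ref{l-dist} verbatim in structure, with the only substantive change being that the average smoothness constant $\bL_{\nabla f}$ from Assumption \ref{a-storm} is replaced by the deterministic smoothness constant $L_{\nabla f}$ from Assumption \ref{a-mm}. This is valid because the identity $G_k = g_k + \rho_k\nabla c(x_k)c(x_k)$ and the update rule $x_{k+1}=\Pi_\mcX(x_k-\eta_k G_k)$ are the same in Algorithms \ref{alg1} and \ref{alg2}; what differs between the two algorithms is only the recursion that produces $g_k$, and that recursion plays no role whatsoever in bounding the stationarity residual.

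Concretely, my plan is as follows. First, I would write the first-order optimality condition for the projection step, namely
\[
\eta_k^{-1}(x_k-x_{k+1})-G_k\in\mcN_\mcX(x_{k+1}),
\]
and then substitute $G_k=g_k+\rho_k\nabla c(x_k)c(x_k)$ together with the identity $\nabla Q_{\rho_k}(x)=\nabla f(x)+\rho_k\nabla c(x)c(x)$ to obtain
\[
\eta_k^{-1}(x_k-x_{k+1})+\nabla f(x_k)-g_k-\nabla Q_{\rho_k}(x_k)\in\mcN_\mcX(x_{k+1}).
\]
Second, I would add $\nabla Q_{\rho_k}(x_{k+1})$ to this element of the normal cone and bound $\dist^2(0,\nabla Q_{\rho_k}(x_{k+1})+\mcN_\mcX(x_{k+1}))$ by the squared norm of the resulting vector, then split it into three pieces using the inequality $\|a+b+c\|^2\le 3(\|a\|^2+\|b\|^2+\|c\|^2)$.

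Third, for the gradient-difference piece I would invoke the $(L_{\nabla f}+\rho_k L)$-smoothness of $Q_{\rho_k}$ on $\mcX$, which follows immediately from Assumption \ref{a-mm}, the definition \eqref{def-L}, and Assumption \ref{a1}(iv), to get
\[
\|\nabla Q_{\rho_k}(x_{k+1})-\nabla Q_{\rho_k}(x_k)\|^2\le(L_{\nabla f}+\rho_k L)^2\|x_{k+1}-x_k\|^2.
\]
Combining this with the $\eta_k^{-2}\|x_{k+1}-x_k\|^2$ term and the $\|g_k-\nabla f(x_k)\|^2$ term yields the claimed bound.

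No step presents a genuine obstacle; the argument is routine once one observes that the bound on the stationarity residual depends on $g_k$ only through the quantity $\|g_k-\nabla f(x_k)\|$, which appears explicitly in the statement, so the specific momentum recursion used to form $g_k$ is irrelevant here. The role of Assumption \ref{a-mm} (as opposed to Assumption \ref{a-storm}) is solely to justify the $(L_{\nabla f}+\rho_k L)$-smoothness of $Q_{\rho_k}$, which is the weakest ingredient needed for the gradient-difference bound.
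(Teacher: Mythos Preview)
Your proposal is correct and matches the paper's own proof, which simply notes that $Q_{\rho_k}$ is $(L_{\nabla f}+\rho_k L)$-smooth (as established in the proof of Lemma~\ref{l-pfunc2}) and then refers back to the argument of Lemma~\ref{l-dist}. Your observation that the specific momentum recursion for $g_k$ plays no role here is exactly the point.
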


\begin{proof}
Recall from the proof of Lemma \ref{l-pfunc2} that $Q_{\rho_k}$ is 
$(L_{\nabla f}+\rho_k L)$-smooth. The proof of this lemma follows from this and similar arguments as in the proof of Lemma \ref{l-dist}.
\end{proof}

\begin{proof}[\textbf{Proof of Theorem \ref{thm:mm}}]
(i)  It follows from \eqref{def-para3}, \eqref{def-tK3} and the assumption $1\leq\theta<2$ that for all $i\geq\tK_2$, 
\begin{align}
&L_{\nabla f}+\rho_iL=L_{\nabla f}i^{-\frac{1}{2}}\eta_i^{-1}/\log(i+2)+Li^{\frac{\theta-2}{4}}\eta_i^{-1}/\log(i+2) \leq L_{\nabla f}\tK_2^{-\frac{1}{2}}\eta_i^{-1}+L\tK_2^{\frac{\theta-2}{4}}\eta_i^{-1}\leq\eta_i^{-1}/4,\label{thm2:ineq1} \\
&2L_{\nabla f}^2\alpha_i^{-1}=2 L_{\nabla f}^2\eta_i^{-1}/\log(i+2)\leq2 L_{\nabla f}^2\eta_i^{-1}/\log(\tK_2+2)\leq\eta_i^{-1}/4, \nn
\end{align}
which imply that
\beq \label{thm2:ineq2}
L_{\nabla f}+\rho_iL+2L_{\nabla f}^2\alpha_i^{-1}\leq\eta_i^{-1}/2\qquad\forall i\geq\tK_2.
\eeq
In addition, observe from \eqref{def-para3} that $\eta_k\leq\alpha_k\leq 1$ for all $k\geq1$. It then follows from the proof of Lemma \ref{l-pfunc2} that \eqref{l11-e2} holds.  By \eqref{def-Q}, \eqref{l11-e2}, \eqref{thm2:ineq1}, \eqref{thm2:ineq2},  Lemmas \ref{l-pfunc2} and \ref{l-dist2}, and similar arguments as for deriving \eqref{t1-sum2}, one can show that for all $k\geq2\tK_2$, 
\begin{align}
&\bbE\left[\dist^2\left(0,\nabla f(x_{\iota_k})+\rho_{\iota_k-1}\nabla c(x_{\iota_k})c(x_{\iota_k})+\mcN_\mcX(x_{\iota_k})\right)\right]\nn\\
&\leq \frac{51}{2(k-1)\eta_{k-1}}\Bigg( Q_1(x_1)-Q_1^*+\|g_1-\nabla f(x_1)\|^2+\frac{1}{2}\sum_{i=1}^{k-1}(\rho_{i+1}-\rho_i)\bbE\left[\|c(x_{i+1})\|^2\right]+\sigma^2\sum_{i=1}^{k-1}\alpha_i^2\nn\\
&\quad+\frac{1}{2}\sum_{i=1}^{\lceil k/2\rceil-1}\left(L_{\nabla f}+\rho_iL-\eta_i^{-1}+2L_{\nabla f}^2\alpha_i^{-1}\right)\bbE\left[\|x_{i+1}-x_i\|^2\right]\Bigg).\label{t2-sum1}
\end{align}
Further, one can observe that \eqref{t1-x1} also holds.  Recall that $1\leq\theta<2$ and $\|c(x_i)\| \leq C_c$ for all $i$. Using these, \eqref{def-para3}, \eqref{t2-ineq2},  \eqref{t1-x1}, and Lemma \ref{l-cnst3}, we have that for all $k\geq2\tK_2$,
\begin{align}
&\sum_{i=1}^{\tK_2-1}(\rho_{i+1}-\rho_i)\bbE\left[\|c(x_{i+1})\|^2\right]\leq C_c^2\sum_{i=1}^{\tK_2-1}\big((i+1)^{\frac{\theta}{4}}-i^{\frac{\theta}{4}}\big)=C_c^2\big(\tK_2^{\frac{\theta}{4}}-1\big),\label{sum-c1}\\
&\sum_{i=\tK_2}^{k-1}(\rho_{i+1}-\rho_i)\bbE\left[\|c(x_{i+1})\|^2\right]\leq 2C_2 \sum_{i=\tK_2}^{k-1}((i+1)^{\frac{\theta}{4}}-i^{\frac{\theta}{4}})(i+1)^{-\frac{1}{2}}\label{sum-c2}\\
&\leq\frac{1}{2}C_2\theta \sum_{i=\tK_2}^{k-1}i^{\frac{\theta-4}{4}}(i+1)^{-\frac{1}{2}}\leq\frac{1}{2}C_2\theta \sum_{i=\tK_2}^{k-1}i^{\frac{\theta-6}{4}}\leq\frac{C_2\theta(6-\theta) }{2(2-\theta)},\label{sum-c3}\\
&\sum_{i=1}^{k-1}\alpha_i^2\overset{\eqref{def-para3}}{=}\sum_{i=1}^{k-1}i^{-1}=1+ \sum_{i=2}^{k-1}i^{-1}\leq 1+\int^{k-1}_1 t^{-1}dt \leq 1+\log k,\label{sum-c4}\\
&\ \sum_{i=1}^{\lceil k/2\rceil-1}\left(\bL_{\nabla f}+\rho_iL-\eta_i^{-1}+2\alpha_i^{-1}\bL_{\nabla f}^2\right)\bbE\left[\|x_{i+1}-x_i\|^2\right]\nn\\
&\ =\sum_{i=1}^{\tK_2-1}\left(\bL_{\nabla f}+\rho_iL-\eta_i^{-1}+2\alpha_i^{-1}\bL_{\nabla f}^2\right)\bbE\left[\|x_{i+1}-x_i\|^2\right]+\sum_{i=\tK_2}^{\lceil k/2\rceil-1}\Big((\bL_{\nabla f}+\rho_iL-\eta_i^{-1}+2\alpha_i^{-1}\bL_{\nabla f}^2)\nn \\ 
&\ \quad\,\,\,\times \bbE\left[\|x_{i+1}-x_i\|^2\right]\Big) \nn \\
&\overset{\eqref{thm2:ineq2}}{\leq}\sum_{i=1}^{\tK_2-1}\left(\bL_{\nabla f}+\rho_iL-\eta_i^{-1}+2\alpha_i^{-1}\bL_{\nabla f}^2\right)\bbE\left[\|x_{i+1}-x_i\|^2\right]\nn\\
&\overset{\eqref{t1-x1}}{\leq}2\sum_{i=1}^{\tK_2-1}\left(\bL_{\nabla f}+\rho_iL+2\alpha_i^{-1}\bL_{\nabla f}^2\right)\eta_i^2\left(L_f^2+C_c^2L_c^2\rho_i^2\right)\nn\\
&\overset{\eqref{def-para3}}{\leq}2(1+\log\tK_2)\big(\bL_{\nabla f}+\tK_2^{\frac{\theta}{4}}L+2\bL_{\nabla f}^2\tK_2^{\frac12}\big)\big(L_f^2+C_c^2L_c^2\tK_2^{\frac{\theta}{2}}\big),\label{sum-c5}
\end{align}
where the inequality in \eqref{sum-c1} follows from \eqref{def-para3} and $\|c(x_i)\| \leq C_c$ for all $i$, \eqref{sum-c2} is due to \eqref{def-para3} and Lemma \ref{l-cnst3}, the first inequality in \eqref{sum-c3} follows from $(i+1)^{\theta/4}-i^{\theta/4}\leq\theta i^{(\theta-4)/4}/4$ for all $i\geq1$ thanks to the concavity of $t^{\theta/4}$ with $1\leq\theta<2$,  the third inequality in \eqref{sum-c3} is due to 
\[
\sum_{i=\tK_2}^{k-1}i^{(\theta-6)/4}\leq \sum_{i=1}^{k-1}i^{(\theta-6)/4} = 1+\sum_{i=2}^{k-1}i^{(\theta-6)/4} \leq 1+\int^{\infty}_1 t^{(\theta-6)/4} dt = 1+\frac{4}{2-\theta},
\]
and the last inequality in \eqref{sum-c5} follows from the relations $\rho_i \leq \tK_2^{\theta/4}$ for $1\leq i\leq \tK_2-1$ and
$\sum_{i=1}^{\tK_2-1}\eta_i^2=\sum_{i=1}^{\tK_2-1}i^{-1}\leq 1+\log\tK_2$ due to the  choice of 
$\rho_i$ and $\eta_i$ in \eqref{def-para3}. 

Using \eqref{def-para3}, \eqref{t2-sum1}, \eqref{sum-c1}, \eqref{sum-c2}, \eqref{sum-c4} and \eqref{sum-c5}, we have
\begin{align*}
&\bbE\left[\dist^2\left(0,\nabla f(x_{\iota_k})+\rho_{\iota_k-1}\nabla c(x_{\iota_k})c(x_{\iota_k})+\mcN_\mcX(x_{\iota_k})\right)\right]\\
&\leq\frac{51\log(k+2)}{2(k-1)^{\frac{1}{2}}}\Bigg( Q_1(x_1)-Q_1^*+\|g_1-\nabla f(x_1)\|^2+\frac{C_2\theta(6-\theta) }{4(2-\theta)}+\frac{1}{2}C_c^2\big(\tK_2^{\frac{\theta}{4}}-1\big)\nn\\
&\quad+\sigma^2(1+\log k)+(1+\log\tK_2)\big(L_{\nabla f}+L\tK_2^{\frac{\theta}{4}}+2L_{\nabla f}^2\tK_2^{\frac{1}{2}}\big)\big(L_f^2+C_c^2L_c^2\tK_2^{\frac{\theta}{2}}\big)\Bigg)\qquad\qquad\forall k\geq2\tK_2.
\end{align*}
By this, \eqref{def-Q}, $\iota_k>\lceil k/2\rceil\geq\tK_2$ for all $k\geq2\tK_2$, and Lemma \ref{l-cnst3} with $k$ replaced by $\iota_k$, one can see that statement (i) of Theorem \ref{thm:mm} holds.

(ii) It follows from \eqref{def-para4} and \eqref{def-tK4} that for all $i\geq\tK_3$,
\begin{align*}
L_{\nabla f}+\rho_iL &=L_{\nabla f}i^{-\frac{1}{2}}\eta_i^{-1}/(4\log(i+2))+L\eta_i^{-1}/(4\log(i+2))\\
&\leq L_{\nabla f}\tK_3^{-\frac{1}{2}}\eta_i^{-1}/4+L\eta_i^{-1}/(4\log(\tK_3+2))\leq\eta_i^{-1}/4,\\
2\alpha_i^{-1}L_{\nabla f}^2& =2L_{\nabla f}^2\eta_i^{-1}/(4\log(i+2))\leq L_{\nabla f}^2\eta_i^{-1}/(2\log(\tK_3+2))\leq\eta_i^{-1}/4,
\end{align*}
which imply that 
\beq\label{t2-ineq2}
L_{\nabla f}+\rho_iL+2L_{\nabla f}^2\alpha_i^{-1}\leq\eta_i^{-1}/2\qquad\forall i\geq\tK_3.
\eeq
By this, Lemma \ref{l-cnst3}, \eqref{def-para4}, \eqref{t1-x1}, $\|c(x_i)\| \leq C_c$ for all $i$, and similar arguments as for deriving \eqref{sum2-c1},  \eqref{sum2-c2} and \eqref{sum2-c4},  
one can show that for all $k\geq2\tK_3$, 
\begin{align*}
&\sum_{i=\tK_3}^{k-1}(\rho_{i+1}-\rho_i)\bbE\left[\|c(x_{i+1})\|^2\right]\leq C_3\max\{1,k^{\frac12-\frac{1}{\theta}}\}(1+\log k),\nn\\
&\sum_{i=1}^{\tK_3-1}(\rho_{i+1}-\rho_i)\bbE\left[\|c(x_{i+1})\|^2\right]\leq C_c^2\big(\tK_3^{\frac{1}{2}}-1\big),\\
&\sum_{i=1}^{\lceil k/2\rceil-1}\left(L_{\nabla f}+\rho_iL-\eta_i^{-1}+2L_{\nabla f}^2\alpha_i^{-1}\right)\bbE\left[\|x_{i+1}-x_i\|^2\right]\nn\\
&\leq
\frac{1}{8}(1+\log\tK_3)\big(L_{\nabla f}+L\tK_3^{\frac{1}{2}}+2L_{\nabla f}^2\tK_3^{\frac{1}{2}}\big)\big(L_f^2+C_c^2L_c^2\tK_3\big).
\end{align*}
In addition, observe from \eqref{def-para4} that $\eta_k\leq\alpha_k\leq 1$ for all $k\geq1$. Using this,  \eqref{t2-ineq2}, and similar arguments as in the proof of statement (i) of this theorem,  we can see that \eqref{t2-sum1} holds for all $k\geq2\tK_3$. Also, by \eqref{def-para4} and \eqref{sum-c4}, one has 
$\sum_{i=1}^{k-1}\alpha_i^2 \leq 1+\log k$ for all $k\geq1$.  Using this, \eqref{def-para4}, \eqref{t2-sum1}, and the above three inequalities,  we have
\begin{align*}
&\bbE\left[\dist^2\left(0,\nabla f(x_{\iota_k})+\rho_{\iota_k-1}\nabla c(x_{\iota_k})c(x_{\iota_k})+\mcN_\mcX(x_{\iota_k})\right)\right]\\
&\leq \frac{102\log(k+2)}{(k-1)^{\frac{1}{2}}}\Bigg( Q_1(x_1)-Q_1^*+\|g_1-\nabla f(x_1)\|^2+\frac{1}{2} C_3\max\{1,k^{\frac{1}{2}-\frac{1}{\theta}}\}(1+\log k)+\frac{1}{2}C_c^2\big(\tK_3^{\frac{1}{2}}-1\big)\nn\\
&\quad+\sigma^2(1+\log k)+\frac{1}{16}(1+\log\tK_3)\big(L_{\nabla f}+L\tK_3^{\frac{1}{2}}+2L_{\nabla f}^2\tK_3^{\frac{1}{2}}\big)\big(L_f^2+C_c^2L_c^2\tK_3\big)\Bigg)\qquad\qquad \forall k\geq2\tK_3.
\end{align*}
By this, \eqref{def-Q}, $\iota_k>\lceil k/2\rceil\geq\tK_3$ for all $k\geq2\tK_3$, and Lemma \ref{l-cnst4} with $k$ replaced by $\iota_k$, one can see that statement (ii) of Theorem \ref{thm:mm} holds.
\end{proof}

\section{Concluding remarks}\label{sec:conclude}

In this paper, we study a class of deterministically constrained stochastic optimization problems and propose single-loop variance-reduced stochastic first-order methods with complexity guarantees for finding an $\epsilon$-\emph{surely feasible} stochastic stationary point ($\epsilon$-SFSSP), which is stronger than those targeted by existing methods---specifically, one in which the constraint violation is within $\epsilon$ with \emph{certainty}, and the expected first-order stationarity violation is within $\epsilon$.

Although we focus on stochastic optimization with deterministic equality constraints only, the proposed methods and complexity results can be directly extended to the following problem:
\[
\min_{x\in\mcX} \big\{\bbE[\tilde f(x,\xi)]:  c_\mcE(x)=0, \ c_\mcI(x) \leq 0\big\},
\] 
where $c_\mcE$ and $c_\mcI$ are smooth mappings, and $\tilde f$ and $\mcX$ are as defined in Section \ref{intro}. Specifically, to solve this problem, Algorithms \ref{alg1} and \ref{alg2} can be modified by updating $G_k$ as
\[
G_k=g_k+\rho_k\big(\nabla c_\mcE(x_k)c_\mcE(x_k)+ \nabla c_\mcI(x_k)[c_\mcI(x_k)]_+\big).
\]
To establish the complexity of the resulting algorithms, we can replace \eqref{err-cond} with
\[
\dist\big(0,\nabla c_\mcE(x)c_\mcE(x)+\nabla c_\mcI(x)[c_\mcI(x)]_++\mcN_\mcX(x)\big)\geq\gamma\|(c_\mcE(x), [c_\mcI(x)]_+)\|^\theta \qquad \forall x\in X,
\]
$Q_\rho$ with $Q_\rho(x) = f(x) + \rho (\|c_\mcE(x)\|^2 + \|[c_\mcI(x)]_+\|^2)/2$, and $h$ with
\[
h(x)=(\|c_\mcE(x)\|^2+\|[c_\mcI(x)]_+\|^2)/2.
\]

For future work, it would be interesting to investigate whether an $\epsilon$-SFSSP satisfying \eqref{eps-stationarity2} can be obtained via a sample average approximation approach (e.g., see \cite{shapiro2021lectures}) with sample and first-order operation complexities comparable to those of our proposed methods. Additionally, we plan to conduct computational studies to evaluate and compare the performance of our methods against existing approaches.

\end{document}